\newtheorem{theorem}{Theorem}[section]
\newtheorem{lemma}[theorem]{Lemma}
\newtheorem{corollary}[theorem]{Corollary}
\newtheorem{proposition}[theorem]{Proposition}
\newtheorem{claim}[theorem]{Claim}
\theoremstyle{definition}
\newtheorem{definition}[theorem]{Definition}
\newtheorem{problem}[theorem]{Problem}
\newtheorem{example}[theorem]{Example}
\newtheorem{remark}[theorem]{Remark}
\newtheorem{question}[theorem]{Question}
\newenvironment{sqcases}{%
  \matrix@check\sqcases\env@sqcases
}{%
  \endarray\right.%
}
\def\env@sqcases{%
  \let\@ifnextchar\new@ifnextchar
  \left\lbrack
  \def\arraystretch{1.2}%
  \array{@{}l@{\quad}l@{}}%
}
\DeclareMathOperator{\lcm}{lcm}
\DeclareMathOperator{\sbridge}{sb}
\DeclareMathOperator{\mingens}{Gens}
\DeclareMathOperator{\BF}{BF}
\DeclareMathOperator{\w}{w}
\DeclareMathOperator{\dist}{dist}
\renewcommand{\H}{\mathcal{H}}
\newcommand{\ZZ}{{\mathbb Z}}
\newcommand{\NN}{{\mathbb N}}
\newcommand{\card}[1]{{\left\lvert #1 \right\lvert}}
\newcommand{\Kfoursymb}[1][1]{%
    \begin{tikzpicture}[scale=#1, %
    baseline=-0.3ex,
    thick,
    ]
    \def\x{2.5mm};
    \def\r{0.2mm};
    \draw (0,0) -- (\x, 0) -- (\x, \x) -- (0, \x) -- cycle;
    \draw (0, 0) -- (\x, \x);
    \draw (\x, 0) -- (0, \x);

    \draw[fill=black] (0,0) circle (\r);
    \draw[fill=black] (0,\x) circle (\r);
    \draw[fill=black] (\x,0) circle (\r);
    \draw[fill=black] (\x,\x) circle (\r);
    \end{tikzpicture}%
}
\newcommand{\diamondsymb}[1][1]{%
    \begin{tikzpicture}[scale=#1, %
    baseline=-0.3ex,
    thick,
    ]
    \def\x{2.5mm};
    \def\r{0.11mm};
    \draw (0,0) -- (\x, 0) -- (\x, \x) -- (0, \x) -- cycle;
    \draw (0, 0) -- (\x, \x);

    \draw[fill=black] (0,0) circle (\r);
    \draw[fill=black] (0,\x) circle (\r);
    \draw[fill=black] (\x,0) circle (\r);
    \draw[fill=black] (\x,\x) circle (\r);
    \end{tikzpicture}%
}
\newcommand{\gemsymb}[1][1]{%
    \begin{tikzpicture}[scale=#1, %
        thick,
    baseline=-0.25ex
    ]
    \def\x{2.5mm};
    \def\r{0.11mm};
    \draw (0,0) -- (\x/2, \x) -- (\x, 0) -- (3*\x/2, \x) -- (2*\x, 0) -- cycle;
    \draw (\x/2, \x) -- (3*\x/2, \x);

    \draw[fill=black] (0,0) circle (\r);
    \draw[fill=black] (\x/2, \x) circle (\r);
    \draw[fill=black] (\x,0) circle (\r);
    \draw[fill=black] (3*\x/2, \x) circle (\r);
    \draw[fill=black] (2*\x, 0) circle (\r);
    \end{tikzpicture}%
}
\newcommand{\kitesymb}[1][1]{%
    \begin{tikzpicture}[scale=#1, %
        thick,
    baseline=-1ex
    ]
    \def\x{2.5mm};
    \def\r{0.11mm};
    \draw (0,0) -- (\x/2, \x/2) -- (\x, 0) -- (\x/2, -\x/2) -- cycle;
    \draw (\x/2, \x/2) -- (\x/2, -\x/2);
    \draw (\x,0) -- (3*\x/2,0);

    \draw[fill=black] (0,0) circle (\r);
    \draw[fill=black] (\x/2, \x/2) circle (\r);
    \draw[fill=black] (\x,0) circle (\r);
    \draw[fill=black] (\x/2, -\x/2) circle (\r);
    \draw[fill=black] (3*\x/2,0) circle (\r);
    \end{tikzpicture}%
}
\newcommand{\pawsymb}[1][1]{%
    \begin{tikzpicture}[scale=#1, %
        thick,
    baseline=-1ex
    ]
    \def\x{2.5mm};
    \def\r{0.11mm};
    \draw (\x/2, \x/2) -- (\x, 0) -- (\x/2, -\x/2) -- cycle;
    \draw (\x,0) -- (3*\x/2,0);

    \draw[fill=black] (\x/2, \x/2) circle (\r);
    \draw[fill=black] (\x, 0) circle (\r);
    \draw[fill=black] (\x/2, -\x/2) circle (\r);
    \draw[fill=black] (3*\x/2,0) circle (\r);
    \end{tikzpicture}%
}
\newcommand{\tadpolesymb}[1][1]{%
    \begin{tikzpicture}[scale=#1, %
				thick,
		baseline=-1ex
		]
		\def\x{3mm};
		\def\r{0.11mm};
		\draw (\x/2, \x/2) -- (\x, 0) -- (\x/2, -\x/2) -- cycle;
		\draw (\x,0) -- (3 * \x/2, 0) -- (4*\x/2, -\x/2);

        \draw[fill=black] (\x/2, \x/2) circle (\r);
        \draw[fill=black] (\x, 0) circle (\r);
        \draw[fill=black] (\x/2, -\x/2) circle (\r);
        \draw[fill=black] (3*\x/2,0) circle (\r);
        \draw[fill=black] (4*\x/2, -\x/2) circle (\r);
	\end{tikzpicture}%
}
\newcommand{\butterflysymb}[1][1]{%
    \begin{tikzpicture}[scale=#1, %
        thick,
    baseline=-1ex
    ]
    \def\x{3mm};
    \def\r{0.11mm};
    \draw (\x/2, \x/2) -- (\x, 0) -- (\x/2, -\x/2) -- cycle;
    \draw (3*\x/2, \x/2) -- (\x, 0) -- (3*\x/2, -\x/2) -- cycle;
    \draw[fill=black] (\x/2, \x/2) circle (\r);
    \draw[fill=black] (\x, 0) circle (\r);
    \draw[fill=black] (\x/2, -\x/2) circle (\r);
    \draw[fill=black] (3*\x/2, \x/2) circle (\r);
    \draw[fill=black] (3*\x/2, -\x/2) circle (\r);
    \end{tikzpicture}%
}
\newcommand{\sunletsymb}[2][1]{%
    \begin{tikzpicture}[scale=#1, %
        thick,
    baseline=-1ex
    ]
    \def\x{1mm};
    \def\r{0.11mm};
    \def\theta{360/#2};

    \foreach \i in {1,...,#2}{
        \draw ({\x*cos(\theta*\i)}, {\x*sin(\theta*\i)}) -- ({\x*cos(\theta*(\i+1))}, {\x*sin(\theta*(\i+1))});
        \draw ({\x*cos(\theta*\i)}, {\x*sin(\theta*\i)}) -- ({2*\x*cos(\theta*\i)}, {2*\x*sin(\theta*\i)});
        
        \draw[fill=black] ({\x*cos(\theta*\i)}, {\x*sin(\theta*\i)}) circle (\r);
        \draw[fill=black] ({2*\x*cos(\theta*\i)}, {2*\x*sin(\theta*\i)}) circle (\r);
    }
    \end{tikzpicture}%
}
\newcommand{\smallestcounterexamplesymb}{%
	\begin{tikzpicture}[scale=0.8, inner sep=0pt, baseline=0ex]
		\def\x{7mm};
		\def\r{0.45mm};
		
	    \node (t) at (-2*\x, 0) {};
	    \node (u) at (0, \x) {};
	    \node (v) at (\x, 0) {};
	    \node (w) at (0, 0) {};
	    \node (x) at (-\x, 0) {};
	    \node (y) at (0, -\x) {};
	    \node (z) at (2*\x, 0) {};

	    \draw[] (t) to (x);
	    \draw[] (x) to (w);
	    \draw[] (u) to (v);
	    \draw[] (u) to (x);
	    \draw[] (v) to (w);
	    \draw[] (y) to (v);
	    \draw[] (y) to (x);
	    \draw[] (v) to (z);

	    \draw[fill=black] (t) circle (\r);
	    \draw[fill=black] (u) circle (\r);
	    \draw[fill=black] (v) circle (\r);
	    \draw[fill=black] (w) circle (\r);
	    \draw[fill=black] (x) circle (\r);
	    \draw[fill=black] (y) circle (\r);
	    \draw[fill=black] (z) circle (\r);
	\end{tikzpicture}%
}
\newcommand{\joinsixcyclessymb}{%
\begin{tikzpicture}[scale=0.8,every node/.style={draw=black,circle}]
		\def\x{15mm};
		\def\r{0.8mm};
		
		\node[] (s) at (0, 1.5*\x) {$s$};
        \node[] (t) at (-1*\x, 0.5*\x) {$t$};
        \node[] (u) at (0, 0.5*\x) {$u$};
        \node[] (v) at (1*\x, 0.5*\x) {$v$};
        \node[] (w) at (-1*\x, -0.5*\x) {$w$};
        \node[] (x) at (0, -0.5*\x) {$x$};
        \node[] (y) at (1*\x, -0.5*\x) {$y$};
        \node[] (z) at (0, -1.5*\x) {$z$};

        \draw[] (s) to (t);
        \draw[] (s) to (u);
        \draw[] (s) to (v);
        \draw[] (t) to (w);
        \draw[] (u) to (x);
        \draw[] (v) to (y);
        \draw[] (w) to (z);
        \draw[] (x) to (z);
        \draw[] (y) to (z);
	\end{tikzpicture}%
}
\newcommand{\cricketsymb}{%
\begin{tikzpicture}[scale=1, %
		baseline=1ex,
		thick,
		]
		\def\x{15mm};
		\def\r{0.3mm};

        \node[label={[xshift=-0.8em, yshift=-1em] \small $x$}] (x) at (0, 0) {};
        \node[label={[xshift=0.8em, yshift=-1em] \small $y$}] (y) at (\x, 0) {};
        \node[label={[xshift=-0.8em, yshift=-1em] \small $z$}] (z) at (0.5 * \x, 0.8 * \x) {};

        \node[label={[xshift=0em, yshift=-1.3em] \small $\cdots$}] (dots) at (0.5 * \x, 0.8 * \x + 0.5 * \x) {};
  
		\draw (0, 0) -- (0.5 * \x, 0.8 * \x) -- (\x, 0) -- cycle;

        \foreach \i in {-1, 1}{
            \foreach \j in {0.5, 0.7}{
                 \draw (0.5 * \x, 0.8 * \x) -- (0.5 * \x + \i * \j * \x, 0.8 * \x + 0.5 * \x);
                 \draw[fill=black] (0.5 * \x + \i * \j * \x, 0.8 * \x + 0.5 * \x) circle (\r);
            }
        }
  
		\draw[fill=black] (x) circle (\r);
		\draw[fill=black] (y) circle (\r);
		\draw[fill=black] (z) circle (\r);
	\end{tikzpicture}%
}
\begin{document}
\title{Minimal Cellular Resolutions of Powers of Graphs}
\author{Trung Chau}
\address{Department of Mathematics, University of Utah, 155 South 1400 East, Salt Lake City, UT~84112, USA}
\email{trung.chau@utah.edu}

\author{T\`ai Huy H\`a}
\address{Tulane University, Mathematics Department, 6823 St. Charles Avenue, New Orleans, LA 70118, USA}
\email{tha@tulane.edu}

\author{Aryaman Maithani}
\address{Department of Mathematics, University of Utah, 155 South 1400 East, Salt Lake City, UT~84112, USA}
\email{maithani@math.utah.edu}

\keywords{Lyubeznik resolution, Barile-Macchia resolution, cellular resolution, edge ideal, graph, hypergraph, tree, hypertree, powers of edge ideals, free resolution, monomial ideal}

\subjclass[2020]{13D02; 13F55; 05C65; 05C75; 05E40}

\begin{abstract}
   Let $G$ be a connected graph and let $I(G)$ denote its edge ideal. We classify when $I(G)^n$, for $n \ge 1$, admits a minimal Lyubeznik resolution. We also give a characterization for when $I(G)^n$ is bridge-friendly, which, in turn, implies that $I(G)^n$ has a minimal Barile-Macchia cellular resolution.
\end{abstract}

\maketitle


\section{Introduction}

It has been a central problem, in the study of minimal free resolutions, to understand when a monomial ideal admits cellular resolutions and how to construct these resolutions (cf. \cite{AFG2020, BM20, BW02, BPS98, BS98, CK24, CT2016, CEFMMSS21, CEFMMSS22, Ly88, OY2015, Vel08}). Despite much effort from many researchers, only a few explicit constructions of simplicial complexes that support the free resolution of a monomial ideal in general are known. The resolutions resulted from these explicit constructions are the \emph{Taylor resolution}, \emph{Lyubeznik resolution} and, in special cases, the \emph{Scarf complex} (see \cite{BPS98, Ly88, Tay66}). From discrete Morse theory, cellular resolutions for monomial ideals were constructed in \cite{BM20, BW02, CK24, CEFMMSS21, CEFMMSS22}; particularly, a subclass  considered in \cite{BM20,CK24,CKW24} is called the \emph{Barile-Macchia resolution}. The Taylor resolution is almost never minimal and the Scarf complex is often not a resolution. Classifying monomials ideals whose Lyubeznik or Barile-Macchia resolutions are minimal, or whose Scarf complexes are resolutions, seems out of reach at this time.

In a recent work \cite{FHHM24} of Faridi, H\`a, Hibi, and Morey, graphs whose edge ideals and their powers admit Scarf resolutions are identified.
More precisely, let $G = (V,E)$ be a simple undirected graph (i.e., $G$ contains neither loops nor multiple edges) over the vertex set $V = \{x_1, \dots, x_r\}$. We will always consider \emph{connected} simple graphs with at least one edge. Let $\Bbbk$ be a field and, by identifying the vertices of $G$ with variables, let $S = \Bbbk[x_1, \dots, x_r] = \Bbbk[V]$ be a polynomial ring. The \emph{edge ideal} of $G$ is defined to be
\begin{equation*} 
	I(G) = \left\langle x_ix_j ~\middle|~ \{x_i, x_j\} \in E \right\rangle \subseteq S.
\end{equation*}
It was proved in \cite[Theorem 8.3]{FHHM24} that the Scarf complex of $I(G)^n$, for a connected graph $G$, is a resolution (which is necessarily minimal) if and only if either
\begin{enumerate}
    \item[(S1)] $n = 1$ and $G$ is a \emph{gap-free} tree; or
    \item[(S2)] $n > 1$ and $G$ is either an isolated vertex, an edge, or a path of length 2.
\end{enumerate}
Our work in this paper addresses the following problem:
\begin{problem} \label{prob:main}
Characterize graphs whose edge ideals and their powers have minimal Lyubeznik and/or Barile-Macchia resolutions.
\end{problem}

Our results give a complete classification of graphs whose edge ideals and their powers have minimal Lyubeznik resolutions. Our method is based on the observation that having a minimal Lyubeznik resolution  descends to HHZ-subideals with respect to any given monomial $m$, i.e., ideals generated by subcollections of the generators which divide the given monomial $m$ (see Lemma \ref{lem:HHZ-BM-Lyubeznik}). HHZ-subideals were introduced in \cite{HHZ04}, and appeared briefly prior in \cite{BPS98}. As a consequence, if $H$ is an induced subgraph of $G$ and $I(H)^n$ does not have a minimal Lyubeznik resolution, then neither does $I(G)^n$. This allows us to reduce the problem to finding ``forbidden structures'' for having a minimal Lyubeznik resolution. 

The problem is considerably more difficult with Barile-Macchia resolutions. Even though having a minimal Barile-Macchia resolution also descends to HHZ-subideals (see Lemma \ref{lem:HHZ-BM-Lyubeznik}), we have not been able to find ``forbidden strictures'' for this property. It is too computationally expensive to verify this property even for all graphs with 9 vertices or fewer. 

\begin{remark}
	One known example of a graph whose edge ideal does not have a minimal Barile-Macchia resolution is the 9-cycle \cite[Remark 4.24]{CK24}. The smallest example was found using an exhaustive search on \texttt{Sage} and is drawn here. 

	\begin{center}
		\smallestcounterexamplesymb
	\end{center}
\end{remark}

Chau and Kara \cite{CK24} introduced the notion of \emph{bridge-friendly} monomial ideals (see Definition \ref{def:bridgefriendly}), which implies that the given monomial ideal has a minimal Barile-Macchia resolution, and this resolution can be nicely described. We will classify \emph{chordal} graphs whose edge ideals are bridge-friendly and, thus, admit minimal cellular resolutions.

Let us now describe our main results in more details. For nonnegative integers $a,b,c$, let $L(a,b,c)$ denote the graph consisting of exactly $c$ triangles sharing one edge $\{x,y\}$ such that $a$ and $b$ distinct leaves are attached to the vertices $x$ and $y$, respectively (see Definition \ref{def:Labc}). On the other hand, associated to a tree $T$ and an edge-weight function $\w$ on the edges of $T$, let $\text{BF}(T,\w)$ be the graph obtained by attaching $\w(e)$ triangles to each edge $e$ in $T$ (see Definition \ref{def:BF}). For the edge ideal $I(G)$ itself, we establish in Theorems \ref{thm:Lyubeznik-graphs} and \ref{thm:bridge-friendly-chordal} the following results.

\begin{enumerate}
\item[(\textbf{\ref{thm:Lyubeznik-graphs}})] $I(G)$ has a minimal Lyubeznik resolution if and only if $G = L(a,b,c)$ for some nonnegative integers $a,b,c$; and
\item[(\textbf{\ref{thm:bridge-friendly-chordal}})] if $G$ is a chordal graph, then $I(G)$ is bridge-friendly if and only if $G =\text{BF}(T,\w),$ for some tree $T$ and edge-weight function $\w$ on $T$.
\end{enumerate}

To prove Theorem \ref{thm:Lyubeznik-graphs}, we establish that:

\begin{itemize}
    \item[(L1)] (forbidden structures) if $G$ is a 5-path $P_5$, 4-cycle $C_4$, 5-cycle $C_5$, 4-complete graph $K_4$, kite graph \kitesymb, gem graph \gemsymb, tadpole graph \tadpolesymb, butterfly graph \butterflysymb, or net graph \sunletsymb{3}, then $I(G)$ does not have a minimal Lyubeznik resolution (Proposition \ref{prop:non-Lyubeznik-graphs});
    \item[(L2)] (graph-theoretic classification) $G$ does not contain an induced subgraph of the forms listed in (L1) if and only if $G = L(a,b,c)$ for $a,b,c \in \ZZ_{\ge 0}$ (Proposition \ref{prop:LyubeznikClass}); and
    \item[(L3)] (Lyubeznik graphs) if $G = L(a,b,c)$, for $a,b,c \in \ZZ_{\ge 0}$, then $I(G)$ has a minimal Lyubeznik resolution (Proposition \ref{prop:Lyubeznik-graphs}).
    \end{itemize}

The proof of Theorem \ref{thm:bridge-friendly-chordal} follows in similar steps; particularly, we show that:

\begin{enumerate}
    \item[(BF1)] (forbidden structures) if $G$ is a 4-complete graph $K_4$ \Kfoursymb, gem graph \gemsymb, kite graph \kitesymb, or net graph \sunletsymb{3}, then $I(G)$ is not bridge-friendly (Proposition \ref{prop:non-bridgefriendly-graphs});
    \item[(BF2)] (graph-theoretic classification) a chordal graph $G$ is does not contain an induced subgraph of the forms listed in (BF1) if and only if $G = \text{BF}(T,\w)$, for a tree $T$ and an edge-weight function $\w$ on $T$ (Proposition \ref{prop:BF-characterization-forbidden}); and
    \item[(BF3)] (bridge-friendly graphs) if $G = \text{BF}(T,\w)$, for some tree $T$ and edge-weight function $\w$, then $G$ is chordal (Proposition \ref{prop:BF-graph-characterization}) and $I(G)$ is bridge-friendly (Proposition \ref{prop:bridefriendly-graphs}).
    \end{enumerate}

The study of higher powers $I(G)^n$, with $n \ge 2$, proceeds in a similar fashion, though considerably simpler. We prove in Theorems \ref{thm:Lyubeznik-powers} and \ref{thm:BF-powers} that:
\begin{enumerate}
    \item[(\textbf{\ref{thm:Lyubeznik-powers}})] $I(G)^n$ has a minimal Lyubeznik resolution if and only if either $G$ is an edge, or $n=2$ and $G$ is a path of length 2; and
    \item[(\textbf{\ref{thm:BF-powers}})] $I(G)^n$ is bridge-friendly if and only if either $G$ is an edge, or $G$ is a path of length 2, or $n=2,3$ and $G$ is the triangle $C_3$.
\end{enumerate} 

Theorems \ref{thm:Lyubeznik-powers} and \ref{thm:BF-powers} are proved in a similar manner to that of Theorems \ref{thm:Lyubeznik-graphs} and \ref{thm:bridge-friendly-chordal}, though simpler. Particularly, we show that:

\begin{enumerate}
    \item[(Lp)] (forbidden structures) if $G$ is $K_{1,3}$, a 4-path $P_4$, a triangle $C_3$, or a 4-cycle $C_4$, then $I(G)^2$ does not have a minimal Lyubeznik resolution; and for the last 3 graphs in the list, neither does $I(G)^n$ for all $n \ge 2$ (see Proposition \ref{prop:non-Lyubeznik-powers}); and
    \item[(BFp)] (forbidden structures) if $G$ is a 4-star $K_{1,3}$, a 4-path graph $P_4$, a 4-cycle graph $C_4$, paw graph \pawsymb, diamond graph \diamondsymb, or a 4-complete graph $K_4$ \Kfoursymb, then $I(G)^2$ and $I(G)^3$ are not bridge-friendly; and for the first three graphs in the list, $I(G)^n$ is not bridge-friendly for all $n \ge 2$ (see Proposition \ref{prop:non-bridgefriendly-powers}). 
\end{enumerate}

\section*{Acknowledgements}

The first and third authors were partially supported by the NSF grants DMS 1801285 and 2101671. The first author was also partially supported by the NSF grant DMS 2001368. The second author acknowledges supports from a Simons Foundation grant. The third author made extensive use of the computer algebra systems \texttt{Sage} \cite{sagemath} and \texttt{Macaulay2} \cite{M2}, and the package \texttt{nauty} \cite{nauty}; the use of these is gratefully acknowledged. 

\section{Preliminaries}\label{section2}

In this section, we collect basic terminology and notations about graphs and edge ideals of graphs. We also give auxiliary results on Lyubeznik and Barile-Macchia resolutions, bridge-friendly property, and HHZ-subideals.

\subsection{Graphs and edge ideals of graphs}
Throughout the paper, $G=(V,E)$ denotes a connected simple graph with vertex set $V$ and edge set $E$, where $\card{E} \ge 1$. Let $\Bbbk$ be a field and let $S = \Bbbk[V]$ be the polynomial ring, whose variables are identified with the vertices in $G$. The \emph{edge ideal} of $G$ is define to be
\[
I(G) = \left\langle x_ix_j ~\middle|~ \{x_i,x_j\} \in E \right\rangle \subseteq S.
\]

A graph $H$ is an \emph{induced subgraph} of $G$ if the vertices of $H$ are vertices of $G$, and the edges of $H$ are exactly the edges of $G$ that connect two vertices in $H$. A graph is called a \emph{tree} if it has no cycles. A \emph{chord} in a cycle is an edge connecting two nonconsecutive vertices in the cycle. The graph $G$ is called \emph{chordal} if every cycle of length $\ge 4$ has a chord.

For a vertex $x \in V$, we call the vertices $\left\{y \in V ~\middle|~ \{x,y\} \in E \right\}$ its \emph{neighbors}. The \emph{distance} between two vertices $x$ and $y$ of $G$, denoted by $\dist_G(x,y)$, is defined the smallest value $n$ such that there exists a path of length $n$ connecting $x$ and $y$ in $G$. In particular, the neighbors of a vertex are exactly those of distance 1 from the given vertex.

We shall denote by $K_{1, n-1}$, $P_n$, $C_n$, and $K_n$ the complete bipartite graph of size $(1,n-1)$, the path with $n$ vertices, the cycle on $n$ vertices, and the complete graph on $n$ vertices, respectively. We sometimes refer to those graphs as the \emph{$n$-star}, \emph{$n$-path}, \emph{$n$-cycle}, and \emph{$n$-complete} graphs. Note that the $n$-path $P_n$ is said to have length $n-1$.

We call the following small graphs by particular names that their shapes represent: \emph{net} (\sunletsymb{3}), \emph{kite} (\kitesymb), \emph{diamond} (\diamondsymb), \emph{paw} (\pawsymb), \emph{gem} (\gemsymb), \emph{butterfly} (\butterflysymb), and \emph{tadpole} (\tadpolesymb).

\begin{figure}[!htb]
\minipage{0.33\textwidth}%
  \begin{center}\sunletsymb[4]{3}\end{center}
  \caption{Net}
\endminipage\hfill
\minipage{0.33\textwidth}%
  \begin{center}\kitesymb[4]\end{center}
  \caption{Kite}
\endminipage\hfill
\minipage{0.33\textwidth}%
  \begin{center}\diamondsymb[4]\end{center}
  \caption{Diamond}
\endminipage

\vspace*{0.4cm}

\minipage{0.33\textwidth}
  \begin{center}\pawsymb[3]\end{center}
  \caption{Paw}
\endminipage\hfill
\minipage{0.33\textwidth}
  \begin{center}\gemsymb[3]\end{center}
  \caption{Gem}
\endminipage\hfill
\minipage{0.33\textwidth}%
  \begin{center}\butterflysymb[3]\end{center}
  \caption{Butterfly}
\endminipage

\vspace*{0.4cm}

\minipage{0.5\textwidth}%
    \begin{center}\tadpolesymb[3.5]\end{center}
  \caption{Tadpole}
\endminipage

\end{figure}

\subsection{Taylor resolutions}

Let $I \subseteq S$ be a homogeneous ideal. A \emph{free resolution} of $S/I$ is a complex of free $S$-modules of the form
\[
\mathcal{F}: 0\to F_p\xrightarrow{\partial_p} F_{p-1}\to \cdots \to F_1 \xrightarrow{\partial_1} F_0\to 0
\]
where $H_0(\mathcal{F})\cong S/I$ and $H_i(\mathcal{F})\cong 0$ if $i\neq 0$. Moreover, $\mathcal{F}$ is \emph{$\mathbb{N}^r$-graded} if $\partial_i$ is $\mathbb{N}^r$-homogeneous for all $i$, and \emph{minimal} if $\partial_i(F_i)\subseteq (x_1,\dots, x_r)F_{i-1}$ for all $i$.

For a monomial ideal $I \subseteq S$, let $\mingens(I)$ denote its unique set of minimal monomial generators. We consider the full $\card{\mingens(I)}$-simplex whose vertices are labelled by the monomial generators of $I$.  It is well-known (cf. \cite{Tay66}) that the chain complex of this simplex gives a free resolution of $S/I$, which is referred to as the \emph{Taylor resolution}. Set $\card{\mingens(I)}=c$. Then the Taylor resolution of $S/I$ is of the form
\[
0\to S^{\binom{c}{c}} \to S^{\binom{c}{c-1}} \to \cdots \to S^{\binom{c}{1}} \to S^{\binom{c}{0}} \to 0.
\]
We remark that for each integer $i$, one can identify a basis of $S^{\binom{c}{i}}$ with the collection of subsets of $\mingens(I)$ with exactly $i$ elements. 

\subsection{Lyubeznik and Barile-Macchia resolutions}\label{subsection2.1}

The Lyubeznik and Barile-Macchia resolutions, introduced in \cite{Ly88, BM20, BW02, CK24}, are subcomplexes of the Taylor resolution. While the Taylor resolution accounts for all subsets of $\mingens(I)$, the Lyubeznik and Barile-Macchia resolutions are constructed based on \emph{Lyubeznik-critical} and \emph{Barile-Macchia-critical} subsets of $\mingens(I)$. The terms \emph{*-critical} depend on the choice of a given total order $(\succ)$ on $\mingens(I)$. Particularly, Lyubeznik-critical and Barile-Macchia-critical subsets with respect to $(\succ)$ are characterized in Propositions \ref{prop:Lyubeznik-subsets-in-general} and \ref{prop:BM-subsets-in-general} below.

\begin{proposition}[\protect{\cite[Theorem 3.2]{BW02}}]\label{prop:Lyubeznik-subsets-in-general}
    Let $\sigma=\{m_1,\dots, m_k \}$ be a subset of $\mingens(I)$, where $m_1\succ \cdots \succ m_k$. Then, $\sigma$ is Lyubeznik-critical if and only if the set
    \[
    \{ m\in \mingens(I)\colon m\mid \lcm(\{m_1, \dots, m_t\}) \text{ for some } 1< t \leq k \text{ where } m_t \succ m \}
    \]
    is empty.
\end{proposition}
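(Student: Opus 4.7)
The plan is to realize the Lyubeznik resolution as the Morse-theoretic reduction of the Taylor complex associated to a particular matching coming from the total order $\succ$, and then to read the claimed characterization directly off the definition of that matching. Identify Taylor faces with subsets $\sigma \subseteq \mingens(I)$, and for $\sigma = \{m_1,\ldots,m_k\}$ with $m_1\succ\cdots\succ m_k$, write
\[
W(\sigma) = \left\{ m \in \mingens(I) : m \mid \lcm(m_1,\ldots,m_t) \text{ for some } 1<t\le k \text{ with } m_t \succ m\right\}
\]
for the set featured in the statement. The aim is to show that $\sigma$ is a critical face of a specific acyclic matching on the Taylor complex exactly when $W(\sigma) = \emptyset$.

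First, I would define a matching $A_\succ$ on the Taylor complex by the rule: if $W(\sigma)\neq\emptyset$, let $\mu(\sigma)$ be its $\succ$-smallest element and pair $\sigma$ with $\sigma\cup\{\mu(\sigma)\}$ or $\sigma\setminus\{\mu(\sigma)\}$ (depending on whether $\mu(\sigma)\in\sigma$); otherwise, leave $\sigma$ unmatched. The routine first check is that this is an involutive matching on edges of the Taylor complex. This reduces to the observation that since $\mu(\sigma)$ is strictly $\succ$-smaller than the witnessing $m_t$, adjoining or deleting $\mu(\sigma)$ does not disturb any prefix $\lcm(m_1,\ldots,m_s)$ for $s\le t$, so the witness set $W$ and its minimum $\mu$ are preserved under the pairing.

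The genuine technical obstacle is acyclicity of $A_\succ$ in the sense of Batzies--Welker, that is, the directed graph obtained by orienting Taylor incidences downward and matching edges upward contains no directed cycle. This requires a monotonicity argument exploiting the $\succ$-minimality built into $\mu$ and the lexicographic comparison of matching witnesses along a putative cycle; this is the content of the proof of \cite[Theorem~3.2]{BW02}. Granted acyclicity, the discrete Morse theorem yields a subcomplex of the Taylor complex whose faces are precisely the unmatched (i.e.\ critical) $\sigma$, and the associated complex of free $S$-modules is a free resolution of $S/I$; this is by definition the Lyubeznik resolution. Unwinding, the Lyubeznik-critical subsets are exactly those $\sigma$ for which $W(\sigma)=\emptyset$, which is the asserted characterization.
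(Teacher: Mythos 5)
The paper does not actually prove this proposition: it is quoted verbatim, with attribution, from \cite[Theorem 3.2]{BW02}, so there is no internal argument to compare yours against. What you have written is a plausible reconstruction of the route taken in that reference. Your definition of the witness set $W(\sigma)$ matches the statement, and your ``routine first check'' that the pairing $\sigma \leftrightarrow \sigma \triangle \{\mu(\sigma)\}$ is involutive is essentially correct, although the justification as written is slightly too narrow: one must verify that \emph{every} prefix lcm of the partner face (not only those with $s \le t$) agrees with the corresponding prefix lcm of $\sigma$, and that the prefix ending at $\mu(\sigma)$ itself contributes no new witnesses; both follow from $\mu(\sigma) \mid \lcm(m_1,\dots,m_t)$ and $m_t \succ \mu(\sigma)$, but the check deserves to be spelled out.

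The real issue is that your argument is not self-contained. The two substantive steps --- acyclicity of the matching, and the identification of the resulting Morse complex with the Lyubeznik resolution as Lyubeznik originally defined it --- are both deferred: the first explicitly to ``the content of the proof of \cite[Theorem 3.2]{BW02},'' the second by declaring the Morse complex to be the Lyubeznik resolution ``by definition.'' Since the characterization of the critical cells is precisely what \cite[Theorem 3.2]{BW02} asserts, citing it for the acyclicity amounts to assuming the statement being proved. Within the conventions of this paper (which, like you, takes the Batzies--Welker description as the definition of the Lyubeznik resolution), treating the proposition as a quoted external result is the right call; but if the goal were an independent proof, you would need to supply the monotonicity/lexicographic argument for acyclicity yourself rather than pointing back at the source.
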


\begin{example}\label{ex:4cycle-BM}
    Consider the ideal $I=(xw,xy,yz,zw) \subseteq \Bbbk[x,y,z,w]$ with the total ordering $wx\succ xy\succ yz\succ zw$. 
    \begin{enumerate}
        \item[(a)] Let $\sigma_1=\{xw,xy,yz\}$. Since $zw\mid \lcm(\sigma_1)$ and $yz\succ zw$, the set $\sigma_1$ is not Lyubeznik-critical.
        \item[(b)] Let $\sigma_2=\{xw,xy,zw\}$. It is routine to verify that $\sigma_2$ is Lyubeznik-critical. Indeed, set $m_1=xw, m_2=xy$, and $m_3=yz$. Then by brute force, one can see that there exists no $m\in \mingens(I)$ such that either 
        \[
        m\mid \lcm(m_1,m_2) \text{ and } m_2\succ m
        \]
        or
        \[
        m\mid \lcm(m_1,m_2,m_3) \text{ and } m_3\succ m.
        \]
    \end{enumerate}
\end{example}

\begin{definition} \quad \quad \label{defi:types}
\begin{enumerate} 
    \item Given $\sigma\subseteq \mingens(I)$ and $m\in \mingens(I)$  such that $\lcm(\sigma \cup \{m\})=\lcm(\sigma\setminus \{m\})$. We say that $m$ is a \emph{bridge} (respectively, \emph{gap}) of $\sigma$ if $m\in \sigma$ (respectively, $m\notin \sigma$).
    \item If $m\succ m'$ where $m,m'\in \mingens(I)$, we say that $m$ \emph{dominates} $m'$.
    \item The \emph{smallest bridge function} is defined to be
    \[
    \sbridge: \mathcal{P}(\mingens(I))\to \mingens(I) \sqcup \{\emptyset\}
    \]
    where $\mathcal{P}(\mingens(I))$ denotes the power set of $\mingens(I)$, and $\sbridge(\sigma)$ is the smallest bridge of $\sigma$ (with respect to $(\succ)$) if $\sigma$ has a bridge and $\emptyset$ otherwise.
    \item A monomial $m\in \mingens(I)$ is called a \emph{true gap} of $\sigma\subseteq \mingens(I)$ if 
        \begin{enumerate}
            \item[(a)]  it is a gap of $\sigma$, and 
            \item[(b)]  the set $\sigma \cup \{m\}$ has no new bridges dominated by $m$. In other words, if $m'$ is a bridge of $\sigma \cup \{m\}$ and $m\succ m'$, then $m'$ is a bridge of $\sigma$.
        \end{enumerate}
    Equivalently, $m$ is not a true gap of $\sigma$ either if $m$ is not a gap of $\sigma$ or if there exists $m'\prec m$ such that $m'$ is a bridge of $\sigma \cup \{m\}$ but not one of $\sigma$. In the latter case, we call $m'$ a \emph{non-true-gap witness} of $m$ in $\sigma$.
    \item A subset $\sigma\subseteq \mingens(I)$ is called \emph{potentially-type-2} if it has a bridge not dominating any of its true gaps, and \emph{type-1} if it has a true gap not dominating any of its bridges. Moreover, $\sigma$ is called \emph{type-2} if it is potentially-type-2 and whenever there exists another potentially-type-2 $\sigma'$ such that 
    \begin{equation*} 
    	\sigma' \setminus \{\sbridge(\sigma')\}=\sigma \setminus \{\sbridge(\sigma)\},
    \end{equation*}
    we have $\sbridge(\sigma')\succ \sbridge(\sigma)$.
\end{enumerate}
\end{definition}

We provide an explicit example of these concepts.

\begin{example}\label{ex:4cycle-BM-2}
    Consider the ideal $I=(xw,xy,yz,zw)$ with the total ordering $wx\succ xy\succ yz\succ zw$. 
    \begin{enumerate}
        \item[(a)] Let $\sigma_1=\{xw,xy,yz\}$. It is clear that $zw$ is the only true gap and $xy$ is the only bridge of $\sigma_1$, and by definition, $\sigma_1$ is type-1.
        \item[(b)] Let $\sigma_2=\{xw,xy,zw\}$. It is clear that $yz$ is the only gap and $xw$ is the only bridge of $\sigma_2$. However, $yz$ is not a true gap $\sigma_2$ and by definition, $\sigma_2$ is potentially-type-2. Moreover, $\sigma_2$ is not type-2 since for $\sigma_2'=\{xy,yz,zw\}$ (which one can check to be potentially-type-2), we have 
        \[
        \sigma_2' \setminus \{\sbridge(\sigma_2')\}=\sigma_2 \setminus \{\sbridge(\sigma_2)\},
        \]
        and $\sbridge(\sigma_2)=xy\succ yz =\sbridge(\sigma_2')$.
        \item[(c)] Let $\sigma_3=\{xw,xy,yz,zw\}$. All elements in $\sigma_3$ are its bridges. Hence $\sigma_3$ is potentially-type-2, and by definition, it is easy to see that it is indeed type-2.
    \end{enumerate}
\end{example}

\begin{proposition}[\protect{\cite[Theorem 2.24]{CK24}}]\label{prop:BM-subsets-in-general}
    Let $\sigma$ be a subset of $\mingens(I)$. Then, $\sigma$ is Barile-Macchia-critical if and only if it is neither type-1 nor type-2.
\end{proposition}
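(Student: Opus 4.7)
The plan is to invoke discrete Morse theory on the Taylor complex of $I$, since the Barile-Macchia resolution is by construction the subcomplex of critical cells with respect to a specific acyclic matching governed by the smallest bridge function $\sbridge$. Throughout, view $\mathcal{P}(\mingens(I))$ as the face poset of the Taylor simplex, and fix the total order $(\succ)$ on $\mingens(I)$.

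First I would write down the candidate matching explicitly: a subset $\tau$ is matched downward with $\tau \setminus \{\sbridge(\tau)\}$ whenever $\tau$ is type-2 (with the tie-break in Definition \ref{defi:types}(5) ensuring the matching is single-valued), and a subset $\sigma$ is matched upward with $\sigma \cup \{m\}$ whenever this upper face is type-2 with smallest bridge $m$. By construction the image of the upward map consists exactly of the type-1 subsets (those having a true gap not dominating any bridge), and the image of the downward map consists exactly of the type-2 subsets. The subsets left unmatched --- the critical cells of the matching --- are therefore precisely those which are neither type-1 nor type-2, and these will be defined to be the Barile-Macchia-critical subsets.

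The content of the proposition thus reduces to two checks. The first is the well-definedness of the pairing: that a type-2 subset $\tau$ is indeed paired with a type-1 subset $\sigma = \tau \setminus \{\sbridge(\tau)\}$, and vice versa. This requires showing that if $\tau$ has a bridge not dominating any true gap, then removing its smallest bridge produces a set with a true gap (namely $\sbridge(\tau)$ itself, viewed now as a gap of $\sigma$) not dominating any bridge of $\sigma$; the tie-break condition in the definition of type-2 is exactly what is needed to make this a bijection between type-1 and type-2 subsets. The second --- and the real obstacle --- is to verify that the matching is acyclic, i.e.\ that no directed cycle arises when one alternates matched edges (reversed) with ordinary covering relations in the face poset. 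The standard strategy is to show that along any such alternating sequence the smallest bridge of the current set strictly decreases under $(\succ)$, using the characterization in Proposition \ref{prop:Lyubeznik-subsets-in-general} together with the fact that true gaps, by part (b) of their definition, cannot introduce smaller bridges. Acyclicity of this sort is where the tie-break in the definition of type-2 (as opposed to merely potentially-type-2) is essential, and I expect this to be the most delicate point.

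Once acyclicity is established, the fundamental theorem of algebraic discrete Morse theory (as applied in \cite{BM20, BW02, CK24}) immediately yields a cellular free resolution of $S/I$ whose cells in homological degree $i$ are indexed by the critical subsets of size $i$. This resolution is, by definition, the Barile-Macchia resolution, and its basis is exactly the subsets that are neither type-1 nor type-2, completing the characterization. In the write-up I would cite \cite[Theorem 2.24]{CK24} for the detailed verification of acyclicity rather than reproduce it.
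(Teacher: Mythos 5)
The paper gives no proof of this proposition: it is imported directly as \cite[Theorem 2.24]{CK24}, so there is no in-paper argument to compare against. Your sketch of the discrete-Morse-theoretic architecture behind that theorem (the matching pairs each type-2 subset with the type-1 subset obtained by deleting its smallest bridge, with the tie-break in the definition of type-2 making the pairing single-valued, and the critical cells being exactly the unmatched subsets) is an accurate description of how the result is established in the cited source, and since you explicitly defer the substantive verification --- acyclicity of the matching --- to that same citation, your treatment is essentially the same as the paper's.
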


The following theorem, resulting from applications of discrete Morse theory, describes Lyubeznik and Barile-Macchia resolutions with respect to a given total order $(\succ)$ on $\mingens(I)$.

\begin{theorem}[\protect{\cite[Propositions 2.2 and 3.1]{BW02}}]\label{thm:Morseres}
    Let $I$ be a monomial ideal with a fixed total order $(\succ)$ on $\mingens(I)$. Let $\mathcal{F}$ be the Lyubeznik (respectively, Barile-Macchia) resolution  of $S/I$ with respect to $(\succ)$. Then, for any integer $i$, a basis of $\mathcal{F}_i$ can be identified with the collection of Lyubeznik-critical (respectively, Barile-Macchia-critical) subsets of $\mingens(I)$ with exactly $i$ elements.
\end{theorem}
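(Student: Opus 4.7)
The plan is to invoke the framework of algebraic discrete Morse theory developed by Batzies and Welker: an acyclic matching $\mathcal{M}$ on the face poset of a CW complex supporting a free resolution of $S/I$ yields a smaller \emph{Morse complex} which again supports a free resolution, and whose cells are in bijection with the $\mathcal{M}$-unmatched (\emph{critical}) faces. Since the Taylor complex is the full simplex on $\mingens(I)$ and supports the Taylor resolution, it suffices to construct, for each of the Lyubeznik and Barile-Macchia settings, an acyclic matching on the Hasse diagram of this simplex whose unmatched faces are precisely the subsets characterized in Propositions \ref{prop:Lyubeznik-subsets-in-general} and \ref{prop:BM-subsets-in-general}. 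The matching will automatically pair faces of adjacent cardinalities, so the cardinality grading in the statement is preserved.

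For the Lyubeznik resolution, I would define a matching $\mathcal{M}_L$ as follows: given a subset $\sigma = \{m_1 \succ \cdots \succ m_k\} \subseteq \mingens(I)$, search for the lexicographically smallest pair $(t, m)$ such that $1 < t \le k$, $m_t \succ m$, and $m \mid \lcm(m_1, \ldots, m_t)$. If no such pair exists, then $\sigma$ is declared unmatched; otherwise, pair $\sigma$ with $\sigma \triangle \{m\}$. By Proposition \ref{prop:Lyubeznik-subsets-in-general}, the unmatched subsets are precisely the Lyubeznik-critical ones, and one checks that this prescription is indeed an involution on the set of non-critical subsets by verifying that the minimal witness $(t, m)$ is the same for $\sigma$ and for $\sigma \triangle \{m\}$.

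For the Barile-Macchia resolution, I would use the matching $\mathcal{M}_{BM}$ in which every type-2 subset $\sigma$ is paired with $\sigma \setminus \{\sbridge(\sigma)\}$, and every type-1 subset $\sigma$ is paired with $\sigma \cup \{g\}$, where $g$ is the largest true gap of $\sigma$ that does not dominate any bridge of $\sigma$. By Proposition \ref{prop:BM-subsets-in-general}, the unmatched subsets are exactly the Barile-Macchia-critical ones. One verifies that each such pairing is a well-defined involution by showing that the image subset is itself type-1 or type-2 in a complementary way, with the tie-breaking clause in the definition of type-2 ensuring that the rules produce the original $\sigma$ back.

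The principal obstacle is proving that each matching is acyclic: the directed graph obtained from the Hasse diagram by reversing matched edges must contain no directed cycle. For $\mathcal{M}_L$, acyclicity follows by tracking the lexicographic pair $(t, m)$ along any would-be cycle and showing that it must strictly decrease, a standard argument for Lyubeznik-type matchings. For $\mathcal{M}_{BM}$, the analysis is considerably more delicate: one must exploit both the definition of \emph{true} gap (to constrain which upward moves are admissible) and the tie-breaking clause in the definition of type-2 (to prevent the matching from creating an alternating loop that would return $\sigma$ to itself). Once acyclicity is verified in both cases, the Batzies-Welker theorem produces the Lyubeznik and Barile-Macchia resolutions with the claimed bases indexed, in homological degree $i$, by the $i$-element critical subsets of $\mingens(I)$.
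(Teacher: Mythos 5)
The paper does not actually prove this statement; it is quoted from \cite[Propositions 2.2 and 3.1]{BW02} and \cite{CK24}, so your attempt can only be measured against the strategy of those references. Your overall route is the right one and is the same as theirs: realize each resolution as the Morse complex of an acyclic matching on the face poset of the Taylor simplex, so that a basis in homological degree $i$ is given by the $i$-element critical cells. Your Lyubeznik matching is also essentially correct: since the witness $(t,m)$ requires $m_t\succ m$, toggling $m$ does not disturb the initial segment $m_1,\dots,m_t$, so the minimal witness is preserved and the rule is an involution on the non-critical sets.

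Your Barile--Macchia matching, however, is wrong as stated: a type-1 set $\tau$ must be matched upward with $\tau\cup\{g\}$ for $g$ the \emph{smallest} true gap of $\tau$ not dominating any bridge of $\tau$, not the largest. The tie-breaking clause in Definition \ref{defi:types}(5) declares type-2 the potentially-type-2 set whose smallest bridge is \emph{minimal} among all potentially-type-2 sets with the same base, so choosing the largest admissible $g$ generally produces a potentially-type-2 set that is not type-2, and the two halves of your matching fail to invert each other. The paper's own Example \ref{ex:4cycle-BM-2} exhibits the failure: with $\tau=\{xy,zw\}$, both $xw$ and $yz$ are true gaps of $\tau$ dominating no bridge (as $\tau$ has none); your rule pairs $\tau$ with $\tau\cup\{xw\}=\sigma_2$, which is neither type-1 nor type-2 and hence is Barile--Macchia-critical by Proposition \ref{prop:BM-subsets-in-general} and must stay unmatched, while your downward rule simultaneously pairs the type-2 set $\sigma_2'=\{xy,yz,zw\}$ with the same $\tau$ --- so the result is not even a matching. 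Replacing ``largest'' by ``smallest'' repairs this. Beyond that, the acyclicity of both matchings --- the actual content of the theorem --- is asserted rather than proved; for the Barile--Macchia matching this occupies a substantial portion of \cite{CK24} and cannot be dismissed as routine.
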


\subsection{Bridge-friendly monomial ideals}

The terminology ``bridge-friendly monomial ideals'' was introduced in \cite{CK24} to ease the process of identifying Barile-Macchia-critical subsets of the generators. The motivation for this concept is partly that potentially-type-2 subsets are easier to check than type-2 subsets.

\begin{definition} \label{def:bridgefriendly}
    A monomial ideal $I$ is called \emph{bridge-friendly} (with respect to $(\succ)$) if all potentially-type-2 subsets of $\mingens(I)$ are type-2. Equivalently, $I$ is bridge-friendly if and only if Barile-Macchia-critical subsets of $\mingens(I)$ are precisely the ones that have neither bridges nor true gaps.
\end{definition}

It turns out that most monomial ideals are bridge-friendly (with respect to a total order of the generators) --- see \cite[Theorem 5.4]{CK24} --- and the Barile-Macchia resolutions of bridge-friendly ideals are minimal --- see \cite[Theorem 2.29]{CK24}. 
We shall identify the failure of being bridge-friendly. 

For the remainder of the section, let $\sigma$ denote a non-empty subset of $\mingens(I)$.

\begin{proposition}[\protect{\cite[Proposition 2.21]{CK24}}]\label{prop:true-gap-not-dominate-bridges}
    A monomial $m$ is a gap of $\sigma$ such that  $\sbridge(\sigma \cup \{m\})=m$ if and only if $m$ is a true gap of $\sigma$ that does not dominate any bridge of $\sigma$.
\end{proposition}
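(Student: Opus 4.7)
The plan is to unwind the definitions of \emph{bridge}, \emph{gap}, \emph{true gap}, and $\sbridge$, and then conclude by a short logical analysis. The key preliminary observation is that whenever $m$ is a gap of $\sigma$, the monomial $m$ is automatically a bridge of $\sigma \cup \{m\}$: using $m \notin \sigma$, one has $\lcm(\sigma \cup \{m\}) = \lcm(\sigma) = \lcm((\sigma \cup \{m\}) \setminus \{m\})$. In particular $\sbridge(\sigma \cup \{m\})$ is defined and $\preceq m$, so the condition $\sbridge(\sigma \cup \{m\}) = m$ is equivalent to saying: every bridge $m' \neq m$ of $\sigma \cup \{m\}$ satisfies $m' \succ m$.

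For the forward direction, suppose $m$ is a gap of $\sigma$ with $\sbridge(\sigma \cup \{m\}) = m$. To see that $m$ is a true gap, I would argue that if some $m' \in \mingens(I)$ were a bridge of $\sigma \cup \{m\}$ with $m \succ m'$, this would contradict the minimality of $m$; hence Definition~\ref{defi:types}(4)(b) holds vacuously. To see that $m$ dominates no bridge of $\sigma$, take any bridge $m'$ of $\sigma$. A short $\lcm$ calculation---using $m \mid \lcm(\sigma) = \lcm(\sigma \setminus \{m'\})$ to obtain $\lcm((\sigma \cup \{m\}) \setminus \{m'\}) = \lcm(\sigma) = \lcm(\sigma \cup \{m\})$---shows that $m'$ remains a bridge of $\sigma \cup \{m\}$. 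Since $m' \in \sigma$ but $m \notin \sigma$, we have $m' \neq m$, so minimality forces $m' \succ m$, i.e., $m \not\succ m'$.

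For the reverse direction, assume $m$ is a true gap of $\sigma$ that dominates no bridge of $\sigma$, and let $m' \neq m$ be an arbitrary bridge of $\sigma \cup \{m\}$. I would split on whether $m'$ is a bridge of $\sigma$: if so, the non-domination hypothesis yields $m \not\succ m'$ directly; if not, the contrapositive of the true-gap condition yields $m \not\succ m'$. Since $m' \neq m$, either way we obtain $m' \succ m$, so $m$ is the smallest bridge of $\sigma \cup \{m\}$, as required.

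I do not anticipate any substantive obstacle: the proposition is essentially a logical reshuffling of the four relevant definitions. The only care needed is to remember that bridges must lie inside the set (so $m \notin \sigma$ guarantees $m$ is never a bridge of $\sigma$ itself), and to apply the contrapositive form of the true-gap condition cleanly.
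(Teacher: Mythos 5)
Your argument is correct and complete: the paper itself states this result as a citation to \cite[Proposition 2.21]{CK24} without reproducing a proof, and your definitional unwinding (the observation that a gap $m$ of $\sigma$ is automatically a bridge of $\sigma\cup\{m\}$, followed by the two-case analysis of the other bridges of $\sigma\cup\{m\}$ according to whether they are already bridges of $\sigma$) is exactly the kind of verification the cited source carries out. No gaps.
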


\begin{lemma}\label{lem:existence-m1-m2-m3}
   A monomial ideal $I$ is not bridge-friendly with respect to $(\succ)$ if and only if there exist a type-1 set $\tau \subseteq \mingens(I)$ and monomials $m_1\succ m_2$ in $\mingens(I)$ such that:
   \begin{enumerate}
       \item The monomials $m_1$ and $m_2$ are true gaps of $\tau$ that do not dominate any bridges (of $\tau$). In particular, $m_1,m_2 \notin \tau$.
       \item The sets $\tau\cup \{m_1\}$ and  $\tau\cup \{m_2\}$ are potentially-type-2.
   \end{enumerate}
   In this case, $m_2$ can be chosen to be the smallest true gap of $\tau$. Moreover, under these conditions, there exists a monomial $m_3\prec m_2$ such that $m_3$ is a bridge of $\tau \cup \{m_1,m_2\}$. In particular, $m_3\in \tau$.
\end{lemma}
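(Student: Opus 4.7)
My approach is a direct unpacking of the definitions, with Proposition \ref{prop:true-gap-not-dominate-bridges} playing the pivotal role of translating ``$m = \sbridge(\sigma \cup \{m\})$'' into ``$m$ is a true gap of $\sigma$ dominating no bridge of $\sigma$''.

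For the forward direction, starting from the assumption that $I$ is not bridge-friendly, Definition \ref{def:bridgefriendly} supplies a potentially-type-2 subset $\sigma$ that fails to be type-2. The definition of type-2 then yields a potentially-type-2 set $\sigma'$ with $\sigma' \setminus \{\sbridge(\sigma')\} = \sigma \setminus \{\sbridge(\sigma)\} =: \tau$ and $\sbridge(\sigma') \prec \sbridge(\sigma)$. Setting $m_1 := \sbridge(\sigma)$ and $m_2 := \sbridge(\sigma')$, I get $m_1 \succ m_2$, $m_1, m_2 \notin \tau$, and Proposition \ref{prop:true-gap-not-dominate-bridges} applied to $\tau$ implies that $m_1$ and $m_2$ are true gaps of $\tau$ dominating no bridge of $\tau$, giving (1); (2) is the setup, and $\tau$ is type-1 since $m_1$ is such a true gap. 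The converse simply reverses this argument: given $\tau, m_1, m_2$ satisfying (1) and (2), the same proposition gives $\sbridge(\tau \cup \{m_i\}) = m_i$, so $\sigma := \tau \cup \{m_1\}$ is potentially-type-2 by (2) and is not type-2 because $\sigma' := \tau \cup \{m_2\}$ witnesses the failure.

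For the claim that $m_2$ may be taken to be the smallest true gap of $\tau$, let $m_2^{\ast}$ denote that smallest true gap, so $m_2^{\ast} \preceq m_2$. A transitivity argument shows $m_2^{\ast}$ dominates no bridge of $\tau$ (otherwise $m_2$ would dominate the same bridge). The only nontrivial point is that $\tau \cup \{m_2^{\ast}\}$ remains potentially-type-2. I would first verify $\sbridge(\tau \cup \{m_2^{\ast}\}) = m_2^{\ast}$: any smaller bridge would, by the true gap property of $m_2^{\ast}$, have to be a bridge of $\tau$, contradicting that $m_2^{\ast}$ dominates no bridge of $\tau$. I would then show by an $\lcm$-transfer computation that $m_2^{\ast}$ dominates no true gap of $\tau \cup \{m_2^{\ast}\}$: a hypothetical true gap $g \prec m_2^{\ast}$ of $\tau \cup \{m_2^{\ast}\}$ would be a gap of $\tau$, fail to be a true gap of $\tau$ by minimality of $m_2^{\ast}$, and its non-true-gap witness $g' \prec g$ would transfer to a non-true-gap witness in $\tau \cup \{m_2^{\ast}\}$, contradicting $g$ being a true gap there. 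This bridge-transfer verification is where the main technical care is required.

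Finally, for the existence of $m_3$: since $\lcm(\tau \cup \{m_1, m_2\}) = \lcm(\tau) = \lcm(\tau \cup \{m_1\})$, the monomial $m_2$ is itself a bridge of $\tau \cup \{m_1, m_2\}$, so it suffices to produce a strictly smaller bridge. Suppose none exists; then $\sbridge(\tau \cup \{m_1, m_2\}) = m_2$, and Proposition \ref{prop:true-gap-not-dominate-bridges} forces $m_2$ to be a true gap of $\sigma := \tau \cup \{m_1\}$ dominating no bridge of $\sigma$. But $\sigma$ is potentially-type-2 by (2), so it has a bridge $b$ dominating no true gap of $\sigma$; combining, $b \preceq m_2$ and $m_2 \preceq b$, which forces $m_2 = b \in \sigma$, contradicting $m_2 \notin \sigma$. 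Thus some bridge $m_3 \prec m_2$ of $\tau \cup \{m_1, m_2\}$ exists, and $m_3 \in \tau$ because $m_3 \in \tau \cup \{m_1, m_2\}$ with $m_3 \neq m_1, m_2$.
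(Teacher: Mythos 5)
Your proof is correct and follows essentially the same route as the paper: both directions are an unpacking of Definition \ref{def:bridgefriendly} with Proposition \ref{prop:true-gap-not-dominate-bridges} translating $\sbridge(\tau\cup\{m\})=m$ into the true-gap condition, and the final contradiction producing $m_3$ rests on the same facts. The only real difference is that where the paper cites \cite[Remark 2.26]{CK24} to conclude that $\tau\cup\{m_2\}$ stays potentially-type-2 when $m_2$ is taken to be the smallest true gap, you reprove that remark directly via the bridge-transfer argument, which is valid and makes the proof self-contained.
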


\begin{proof}
    The ideal $I$ is not bridge-friendly if and only if there exists a set $\sigma\in \mingens(I)$ that is potentially-type-2, but not type-2. By definition, this is equivalent to saying that there exists a different potentially-type-2 set $\sigma'\subseteq \mingens(I)$ such that
    \[
    \sigma' \setminus \{\sbridge(\sigma')\}=\sigma \setminus \{\sbridge(\sigma)\},
    \]
    and $\sbridge(\sigma)\succ \sbridge(\sigma')$. Set 
        $m_1 = \sbridge(\sigma),\ m_2 = \sbridge(\sigma'),\text{ and }
        \tau = \sigma \setminus \{m_1\}= \sigma' \setminus \{m_2\}.$ 
    Then, using Proposition \ref{prop:true-gap-not-dominate-bridges}, one can see that this condition is equivalent to $(1)$ and $(2)$, as claimed. 
    
    We remark that $m_2$ is a true gap of $\tau$ by definition. We can choose $m_2$ to be the smallest true gap of $\tau$ since then $m_2=\sbridge(\tau \cup \{m_2\})$ and $\tau\cup \{m_2\}$ is potentially-type-2 by \cite[Remark 2.26]{CK24}. Thus, the previous part can still proceed in this setting.
    Moreover, since $\sigma$ is potentially-type-2, $m_2$ is not a true gap of $\sigma$ by definition, and so $\tau \cup \{m_1,m_2\}=\sigma \cup \{m_2\}$ has a bridge $m_3\prec m_2$, as claimed.
\end{proof}

The equivalent condition to non-bridge-friendliness in fact says a lot more about the three monomials $m_1\succ m_2 \succ m_3$. We will first introduce a new terminology.

\begin{definition}
    Let $I$ be a monomial ideal, $x$ a variable, $\sigma \subset \mingens(I)$, and $m, m' \in \sigma$. We say that two monomials $m$, $m'$ \emph{share a factor $x^n$ unique within $\sigma$} if $n\geq 1$ and we have the following:
    \begin{enumerate}[label=(\roman*)]
        \item $x^n\mid m,m'$ but $x^{n+1}\nmid m,m'$.
        \item $x^n\nmid m''$ for all $m''\in \sigma \setminus \{m,m'\}$.
    \end{enumerate}
\end{definition}

This concept appears many times when we work with gaps that are not true gaps, as shown in the next result.

\begin{proposition}\label{prop:gap-but-not-true-gap}
    If a monomial $m\in \mingens(I)$ is a gap, but not a true gap of $\sigma$, then $m$ and any of its non-true-gap witnesses $m'$ share a factor unique within $\sigma$.
\end{proposition}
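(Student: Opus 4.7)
The plan is to unpack the definitions of "gap," "non-true-gap witness," and "bridge" into equalities of $\lcm$'s, and then do a variable-by-variable analysis on valuations.

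First, I would translate the hypotheses into four equations/inequalities. Since $m$ is a gap of $\sigma$, we have $m \notin \sigma$ and
\[
\lcm(\sigma \cup \{m\}) = \lcm(\sigma).
\]
Since $m' \prec m$ witnesses that $m$ is not a true gap, $m'$ is a bridge of $\sigma \cup \{m\}$ but not of $\sigma$. The condition $m' \prec m$ forces $m' \neq m$, and being a bridge forces $m' \in \sigma \cup \{m\}$; hence $m' \in \sigma$. Writing out the bridge conditions,
\[
\lcm\bigl((\sigma \setminus \{m'\}) \cup \{m\}\bigr) = \lcm(\sigma \cup \{m\}) = \lcm(\sigma) \neq \lcm(\sigma \setminus \{m'\}).
\]

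Next, I would compare the first and last terms. Since adding $m$ to $\sigma \setminus \{m'\}$ strictly raises the $\lcm$, there must exist a variable $x$ such that
\[
v_x(m) \;>\; v_x\bigl(\lcm(\sigma \setminus \{m'\})\bigr) \;=\; \max_{m'' \in \sigma \setminus \{m'\}} v_x(m''),
\]
where $v_x$ denotes the $x$-adic valuation. Set $n \coloneqq v_x(m) \geq 1$; this immediately gives condition (ii) of the shared-factor definition, namely $x^n \nmid m''$ for every $m'' \in \sigma \setminus \{m, m'\} = \sigma \setminus \{m'\}$.

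It remains to check that $v_x(m') = n$. The computation is forced in both directions. Since $\lcm(\sigma \cup \{m\}) = \lcm(\sigma)$ at the variable $x$, and every element of $\sigma \setminus \{m'\}$ has $v_x$-value strictly less than $n$, the value $v_x(\lcm(\sigma)) \geq v_x(m) = n$ can only be achieved by $m'$, so $v_x(m') \geq n$. Conversely, applying the bridge equation $\lcm((\sigma \cup \{m\}) \setminus \{m'\}) = \lcm(\sigma \cup \{m\})$ at $x$ yields
\[
v_x(m') \;\leq\; \max\Bigl( n,\ \max_{m'' \in \sigma \setminus \{m'\}} v_x(m'') \Bigr) \;=\; n.
\]
Thus $v_x(m) = v_x(m') = n$ with no other element of $\sigma$ reaching $n$ in $x$, giving both (i) and (ii) of the shared-factor definition.

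There is no real obstacle here; the whole content is bookkeeping with valuations, and the only subtle point is checking that $m'$ genuinely lies in $\sigma$ (rather than equaling $m$), which is ensured by the strict inequality $m' \prec m$. I would present the argument in the order above: deduce $m' \in \sigma$, extract the witnessing variable $x$ and its exponent $n$ from the strict inequality $\lcm(\sigma \setminus \{m'\}) \neq \lcm(\sigma)$, and then pinch $v_x(m')$ between $n$ and $n$ using the two bridge equations.
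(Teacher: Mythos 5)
Your argument is correct and follows essentially the same route as the paper's proof: both extract a variable $x$ from the single strict inequality $\lcm(\sigma\setminus\{m'\})\neq\lcm(\sigma)=\lcm(\sigma\cup\{m\})$ and then pin down the common exponent $n$ by valuation bookkeeping, the only cosmetic difference being that the paper anchors $n$ at $v_x(m')$ (from $m'$ failing to be a bridge of $\sigma$) and deduces $v_x(m)=n$, while you anchor $n$ at $v_x(m)$ and deduce $v_x(m')=n$. Your explicit observation that $m'\in\sigma$ is a worthwhile addition that the paper leaves implicit.
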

   
\begin{proof}
     Since $m'$ is not a bridge of $\sigma$, there exists a factor $x^n$ such that $x^n\mid m'$, $x^{n+1}\nmid m'$, and $x^n\nmid m''$ for each $m''\in \sigma\setminus\{m'\}$. Moreover, since $m'$ is a bridge of $\sigma\cup \{m\}$, we have 
    \[
    x^n \mid m'\mid \lcm(\sigma \cup \{m\}),
    \]
    and thus $x^n \mid m$. On the other hand, we have $x^{n+1}\nmid m$ since $\lcm(\sigma \cup \{m\})=\lcm (\sigma)$.    
\end{proof}

\begin{remark}\label{rem:unique-factor-m1-m2-m3}
    In the proof of Lemma \ref{lem:existence-m1-m2-m3}, we showed that $m_2$ is a gap of $\tau \cup \{m_1\}$ and set $m_3$ to be a non-true-gap witness of $m_2$ in $\tau \cup \{m_1\}$. By Proposition \ref{prop:gap-but-not-true-gap}, $m_2$ and $m_3$ share a factor unique within $\tau \cup \{m_1\}$. It can be checked that $m_3$ is also a non-true-gap witness of $m_1$ in $\tau \cup \{m_2\}$. Therefore, $m_1$ and $m_3$ share a factor unique within $\tau \cup \{m_2\}$.
\end{remark}

This observation immediately gives a condition to determine true gaps:

\begin{corollary}\label{cor:true-gap}
    If a monomial $m$ is a gap of $\sigma$ such that 
    \[
    m\mid\lcm(\{m'\in \sigma \colon  m'\succ m \text{ or $m$ and $m'$ do not share a factor unique within $\sigma$}  \}),
    \]
    then $m$ is a true gap of $\sigma$.
\end{corollary}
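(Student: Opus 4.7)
The plan is to argue by contradiction: suppose $m$ is a gap of $\sigma$ that is \emph{not} a true gap, and combine Proposition \ref{prop:gap-but-not-true-gap} with the hypothesis to derive a contradiction. If $m$ is a gap but not a true gap of $\sigma$, then Definition \ref{defi:types} yields a non-true-gap witness $m^* \prec m$, i.e.\ an element of $\sigma$ (automatically, since $m^* \neq m$) that is a bridge of $\sigma \cup \{m\}$ but not of $\sigma$. Proposition \ref{prop:gap-but-not-true-gap} then provides a variable $x$ and an exponent $n \geq 1$ for which
\[
x^n \mid m,\ x^n \mid m^*,\ x^{n+1} \nmid m,\ x^{n+1} \nmid m^*, \text{ and } x^n \nmid m'' \text{ for all } m'' \in \sigma \setminus \{m^*\};
\]
in particular, $m^*$ is the unique element of $\sigma$ divisible by $x^n$, and $m$ shares the factor $x^n$ uniquely within $\sigma$ with $m^*$.

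Next, I would denote the set on the right-hand side of the hypothesis by
\[
T := \{m' \in \sigma \colon m' \succ m \text{ or } m \text{ and } m' \text{ do not share a factor unique within } \sigma\},
\]
so that the hypothesis reads $m \mid \lcm(T)$. Since $x^n \mid m$, this forces $x^n \mid \lcm(T)$, so some $m' \in T$ must satisfy $x^n \mid m'$. The uniqueness noted above then forces $m' = m^*$, and hence $m' \prec m$. Therefore $m'$ cannot lie in $T$ via the clause ``$m' \succ m$'', so its membership must come from the clause ``$m$ and $m'$ do not share a factor unique within $\sigma$''. But $m' = m^*$, and $m$ does share the factor $x^n$ with $m^*$ uniquely within $\sigma$. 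This is the desired contradiction.

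I do not anticipate a substantive obstacle; the argument is essentially a bookkeeping reconciliation of the uniqueness statement in Proposition \ref{prop:gap-but-not-true-gap} with the disjunctive clauses defining $T$. The only minor subtlety worth noting is the mild notational abuse of applying ``share a factor unique within $\sigma$'' to the pair $(m, m^*)$ when $m \notin \sigma$: since $\sigma \setminus \{m, m^*\} = \sigma \setminus \{m^*\}$, the uniqueness condition in the definition is precisely what Proposition \ref{prop:gap-but-not-true-gap} supplies.
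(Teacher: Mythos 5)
Your proof is correct and is essentially the argument the paper intends: the corollary is stated as an immediate consequence of Proposition \ref{prop:gap-but-not-true-gap}, and your contrapositive reading --- any non-true-gap witness $m^*$ would be the unique element of $\sigma$ carrying the factor $x^n$, yet is excluded from the set whose $\lcm$ must be divisible by $m$ --- is exactly the intended reconciliation. Your remark on the notational abuse (applying the definition to $m \notin \sigma$) is a fair and accurate observation.
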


\subsection{Restriction lemmas and HHZ-subideals}

There is a one-to-one correspondence between $\mathbb{N}^r$ and the set of monomials in $S$. Thus, at times we will abuse notations and use monomials (in $S$) and vectors (in $\mathbb{N}^r$) interchangeably.

Fix a monomial ideal $I \subseteq S$ and a monomial $m \in S$. Let $I^{\leq m}$ be the monomial ideal generated by elements of $\mingens(I)$ that divides $m$. It is clear that $I^{\leq m}$ is always a subideal of $I$. This notation was introduced in \cite{HHZ04}, although the idea briefly appeared prior in \cite{BPS98}. We will call $I^{\leq m}$ a \emph{Herzog-Hibi-Zheng-subideal} (with respect to $m$) of $I$, or \emph{HHZ-subideal} for short. 

Let $\mathcal{F}: 0\to F_p \to F_{p-1}\to \dots \to F_1\to F_0 \to 0$
be a (minimal) $\mathbb{N}^r$-graded free resolutions of $S/I$, with $F_i= \bigoplus_{j \in \ZZ} R(-q_{ij})$. Let $\mathcal{F}^{\leq m}$ be the subcomplex of $\mathcal{F}$, whose $i$-th syzygy module is $\bigoplus_{q_{ij}\leq m} R(-q_{ij})$. Here, for monomials $a$ and $b$, we write $a\leq b$ to mean $a\mid b$. 

By \cite[Lemma 4.4]{HHZ04}, $\mathcal{F}^{\leq m}$ is a (minimal) $\mathbb{N}^r$-graded free resolution of $S/I^{\leq m}$. This result is referred to as the \emph{Restriction Lemma}. We will also state analogs of the Restriction Lemma for Lyubeznik, Barile-Macchia resolutions, and the bridge-friendly property. These analogs come from the observation that, for a fixed a monomial $m$, faces $\sigma$ in the simplex over the vertices $\mingens(I)$, whose least common multiple of the vertices divides $m$, are exactly the faces of the simplex over the vertices $\mingens(I^{\leq m})$.

\begin{lemma}\label{lem:HHZ-BM-Lyubeznik}
    (Restriction Lemma for Barile-Macchia/Lyubeznik resolutions) Let $I$ be a monomial ideal and let $m$ be a monomial in $S$. If $\mathcal{F}$ is a (minimal) Barile-Macchia (respectively, Lyubeznik) resolution of $S/I$, then $\mathcal{F}^{\leq m}$ is a (minimal) Barile-Macchia (respectively, Lyubeznik) resolution of $S/I^{\leq m}$.
\end{lemma}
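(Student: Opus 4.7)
The plan is to leverage Theorem \ref{thm:Morseres}: the Lyubeznik (respectively, Barile-Macchia) resolution is the subcomplex of the Taylor resolution spanned by the Lyubeznik-critical (respectively, Barile-Macchia-critical) subsets of $\mingens(I)$. So I want to identify the restriction $\mathcal{F}^{\leq m}$ explicitly as the Lyubeznik or Barile-Macchia resolution of $S/I^{\leq m}$ with respect to the order induced from $(\succ)$ on $\mingens(I^{\leq m})$. The starting observation is that for any subset $\sigma\subseteq\mingens(I)$, the basis element of the Taylor resolution indexed by $\sigma$ has multidegree $\lcm(\sigma)$, and $\lcm(\sigma)\mid m$ if and only if $\sigma\subseteq\mingens(I^{\leq m})$. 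Hence $\mathcal{F}^{\leq m}$ is spanned by precisely those critical subsets that are contained in $\mingens(I^{\leq m})$.

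The central point is then to verify that, for a fixed $\sigma\subseteq\mingens(I^{\leq m})$, each combinatorial property used in Propositions \ref{prop:Lyubeznik-subsets-in-general} and \ref{prop:BM-subsets-in-general} is insensitive to whether we work inside $\mingens(I)$ or inside $\mingens(I^{\leq m})$. For the Lyubeznik case this is immediate: any $m'\in\mingens(I)$ dividing $\lcm(\{m_1,\ldots,m_t\})$ for some initial segment of $\sigma$ divides $\lcm(\sigma)\mid m$, so $m'\in\mingens(I^{\leq m})$; thus the set appearing in Proposition \ref{prop:Lyubeznik-subsets-in-general} is the same when computed in either ideal. For the Barile-Macchia case, I will check in turn that (i) bridges of $\sigma$ are internal to $\sigma$, so are unaffected; (ii) gaps of $\sigma$ with respect to $I$ automatically divide $\lcm(\sigma)\mid m$, hence lie in $\mingens(I^{\leq m})$, so the gap sets coincide; (iii) the true-gap condition in Definition \ref{defi:types}(4) only inspects bridges of $\sigma\cup\{m_0\}\subseteq\mingens(I^{\leq m})$, so it transfers.

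The only genuinely subtle step is the type-2 property, which involves an external witness $\sigma'$ in Definition \ref{defi:types}(5) satisfying $\sigma'\setminus\{\sbridge(\sigma')\}=\sigma\setminus\{\sbridge(\sigma)\}$. I need that every such $\sigma'\subseteq\mingens(I)$ actually lies in $\mingens(I^{\leq m})$, so that the comparison yielding type-2-ness is the same in both ambient ideals. For this, note that $\sigma'\setminus\{\sbridge(\sigma')\}\subseteq\mingens(I^{\leq m})$ by hypothesis, while $\sbridge(\sigma')$, being a bridge of $\sigma'$, divides $\lcm(\sigma'\setminus\{\sbridge(\sigma')\})=\lcm(\sigma\setminus\{\sbridge(\sigma)\})$, which divides $m$. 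Hence $\sbridge(\sigma')\in\mingens(I^{\leq m})$, so $\sigma'\subseteq\mingens(I^{\leq m})$, as needed. Combining (i)--(iii) with this observation shows that the Barile-Macchia-critical subsets of $\mingens(I^{\leq m})$ are exactly the Barile-Macchia-critical subsets of $\mingens(I)$ contained in $\mingens(I^{\leq m})$, and similarly for Lyubeznik.

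Finally, for the minimality statement I will invoke the Restriction Lemma of Herzog--Hibi--Zheng \cite[Lemma 4.4]{HHZ04}: if $\mathcal{F}$ is a minimal $\mathbb{N}^r$-graded free resolution then so is $\mathcal{F}^{\leq m}$. I expect the main obstacle to be the type-2 argument; once one writes out the definitions carefully, however, the containment argument above shows that the potentially-type-2 witnesses available inside $\mingens(I)$ and inside $\mingens(I^{\leq m})$ are identical, so the two notions of being type-2 agree, completing the proof.
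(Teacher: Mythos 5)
Your proposal is correct and follows essentially the same route as the paper, which justifies this lemma by the single observation that the faces of the Taylor simplex on $\mingens(I)$ whose lcm divides $m$ are precisely the faces of the Taylor simplex on $\mingens(I^{\leq m})$; your case-by-case verification (including the containment argument showing that any type-2 witness $\sigma'$ automatically lies in $\mingens(I^{\leq m})$) simply fleshes out that observation in more detail than the paper does. No gaps.
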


\begin{lemma}\label{lem:HHZ-bridge-friendly}
    (Restriction Lemma for bridge-friendly ideals) Let $I$ be a monomial ideal and let $m$ be a monomial in $S$. If $I$ is bridge-friendly with respect to a total order $(\succ)$, then so is $I^{\leq m}$ with respect to the total order induced from $(\succ)$.
\end{lemma}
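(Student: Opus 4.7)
The plan is to prove that for any $\sigma \subseteq \mingens(I^{\leq m})$, the combinatorial status of $\sigma$ — whether it has a bridge, a gap, a true gap, is potentially-type-2, or is type-2 — is the same whether computed within $\mingens(I)$ or within the smaller set $\mingens(I^{\leq m})$. Once this transfer principle is in place, the lemma follows immediately: any potentially-type-2 $\sigma \subseteq \mingens(I^{\leq m})$ is then potentially-type-2 in $\mingens(I)$, hence type-2 by the bridge-friendliness of $I$, hence type-2 in $\mingens(I^{\leq m})$.

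The key mechanism behind every step is the divisibility $\lcm(\sigma) \mid m$, which holds for every $\sigma \subseteq \mingens(I^{\leq m})$. From this I would extract three observations. First, every bridge of $\sigma$ lies in $\sigma \subseteq \mingens(I^{\leq m})$, so bridges are insensitive to the ambient set. Second, if $m' \in \mingens(I)$ is a gap of $\sigma$, then $m' \mid \lcm(\sigma \cup \{m'\}) = \lcm(\sigma) \mid m$, so $m' \in \mingens(I^{\leq m})$; thus the sets of gaps agree. Third, for a true-gap test on $m' \in \mingens(I^{\leq m})$, any potential non-true-gap witness $m'' \in \mingens(I)$ satisfies $m'' \mid \lcm(\sigma \cup \{m'\}) \mid m$ and so lies in $\mingens(I^{\leq m})$. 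Hence $m'$ is a true gap of $\sigma$ in $I$ if and only if it is a true gap in $I^{\leq m}$. Together these facts say that the bridges and true gaps of $\sigma$ coincide in both settings, which in turn transfers the property of being potentially-type-2.

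The most delicate step — and the one I expect to be the main obstacle in writing the proof cleanly — is the transfer of the type-2 property. Its definition compares $\sigma$ to another potentially-type-2 set $\sigma' \subseteq \mingens(I)$ with $\sigma' \setminus \{\sbridge(\sigma')\} = \sigma \setminus \{\sbridge(\sigma)\}$, and a priori $\sigma'$ could contain a generator that does not divide $m$. To rule this out I would reuse the lcm-divisibility idea: since $\sbridge(\sigma')$ is a bridge of $\sigma'$, one has $\lcm(\sigma') = \lcm(\sigma' \setminus \{\sbridge(\sigma')\}) = \lcm(\sigma \setminus \{\sbridge(\sigma)\}) \mid m$, forcing every element of $\sigma'$ to divide $m$. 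Thus $\sigma' \subseteq \mingens(I^{\leq m})$, so the quantifier over competing $\sigma'$ ranges over the same collection in both settings, completing the proof.
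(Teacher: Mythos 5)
Your proposal is correct and takes essentially the same route as the paper, which justifies this lemma with the single observation that the subsets of $\mingens(I)$ whose lcm divides $m$ are exactly the subsets of $\mingens(I^{\leq m})$; your writeup is a careful expansion of that observation, correctly noting the two points that need checking (that gaps, true gaps, and non-true-gap witnesses of a $\sigma \subseteq \mingens(I^{\leq m})$ automatically divide $\lcm(\sigma) \mid m$, and that any competing potentially-type-2 set $\sigma'$ in the type-2 test satisfies $\lcm(\sigma') = \lcm(\sigma \setminus \{\sbridge(\sigma)\}) \mid m$ and hence lies in $\mingens(I^{\leq m})$). No gaps.
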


As an application of Lemmas \ref{lem:HHZ-BM-Lyubeznik} and \ref{lem:HHZ-bridge-friendly}, consider a graph $G$ and an induced subgraph $H$. Let $m$ be the product of all the vertices in $H$. Then, $I(H)^n=(I(G)^n)^{\leq m^n}$ is an HHZ-subideal of $I(G)^n$, for each integer $n$. Thus, we arrive at the following corollary. 

\begin{corollary}\label{cor:induced-subgraph-Morse}
    Let $G$ be a graph and let $H$ be an induced subgraph of $G$. For any integer $n$, if $I(G)^n$ is Lyubeznik, Barile-Macchia, or bridge-friendly, then so is $I(H)^n$.
\end{corollary}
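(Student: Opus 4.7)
The plan is to realize $I(H)^n$ as an HHZ-subideal of $I(G)^n$ with respect to a carefully chosen monomial, and then invoke the Restriction Lemmas (Lemmas \ref{lem:HHZ-BM-Lyubeznik} and \ref{lem:HHZ-bridge-friendly}) directly.

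First, let $V(H) = \{y_1,\ldots,y_s\}$ denote the vertex set of $H$ and set
\[
m \;=\; \prod_{y \in V(H)} y.
\]
I claim that $I(H)^n = (I(G)^n)^{\leq m^n}$. A minimal generator of $I(G)^n$ is a product $e_1 e_2 \cdots e_n$ where each $e_i$ is a monomial of the form $x_j x_k$ corresponding to an edge of $G$. Such a generator divides $m^n$ if and only if every variable appearing in it lies in $V(H)$, i.e., every edge $e_i$ has both endpoints in $V(H)$. Since $H$ is an induced subgraph of $G$, this is equivalent to saying each $e_i$ is an edge of $H$, which in turn is equivalent to $e_1 \cdots e_n \in \mingens(I(H)^n)$. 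This proves the claim.

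Now I would simply quote the restriction lemmas. If $I(G)^n$ admits a minimal Lyubeznik (respectively, Barile-Macchia) resolution $\mathcal{F}$, then by Lemma \ref{lem:HHZ-BM-Lyubeznik} the complex $\mathcal{F}^{\leq m^n}$ is a minimal Lyubeznik (respectively, Barile-Macchia) resolution of $S/(I(G)^n)^{\leq m^n} = S/I(H)^n$. Similarly, if $I(G)^n$ is bridge-friendly with respect to some total order $(\succ)$ on $\mingens(I(G)^n)$, then by Lemma \ref{lem:HHZ-bridge-friendly} the HHZ-subideal $(I(G)^n)^{\leq m^n} = I(H)^n$ is bridge-friendly with respect to the restricted order on $\mingens(I(H)^n)$.

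There is no real obstacle here; the only nontrivial content is the identification $I(H)^n = (I(G)^n)^{\leq m^n}$, which uses crucially that $H$ is an \emph{induced} subgraph (so that the edge set of $H$ is determined by its vertex set). The remainder is a one-line appeal to the two restriction lemmas already established.
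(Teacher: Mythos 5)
Your proof is correct and follows exactly the paper's argument: take $m$ to be the product of the vertices of $H$, identify $I(H)^n = (I(G)^n)^{\leq m^n}$ using that $H$ is induced, and apply Lemmas \ref{lem:HHZ-BM-Lyubeznik} and \ref{lem:HHZ-bridge-friendly}. Your version spells out the divisibility check more explicitly than the paper does, but the approach is identical.
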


The following result is obvious from definition, which allows us to discuss about the property of being Lyubeznik, Barile-Macchia, and bridge-friendly for a monomial ideal $I$ and $mI$ interchangeably, where $m$ is any monomial.

\begin{proposition} \label{prop:mI}
    Let $I$ be a monomial ideal and let $m$ be a monomial. Then $I$ is Lyubeznik, Barile-Macchia, or bridge-friendly, if and only if so is $mI$.
\end{proposition}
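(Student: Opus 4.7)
The plan is to exploit the obvious bijection
\[
\Phi\colon \mingens(I) \longrightarrow \mingens(mI), \qquad g \longmapsto mg,
\]
and to transport the chosen total order $(\succ)$ on $\mingens(I)$ to the total order $(\succ_m)$ on $\mingens(mI)$ defined by $mg \succ_m mg'$ if and only if $g \succ g'$. The proof then reduces to verifying that every combinatorial notion appearing in Propositions \ref{prop:Lyubeznik-subsets-in-general}, \ref{prop:BM-subsets-in-general}, and in Definitions \ref{defi:types} and \ref{def:bridgefriendly} is transported through $\Phi$, so that $\sigma \subseteq \mingens(I)$ has a given property with respect to $(\succ)$ if and only if $\Phi(\sigma) = m\sigma$ has the same property with respect to $(\succ_m)$.

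The single fact that drives everything is the identity
\[
\lcm(m\sigma) = m \cdot \lcm(\sigma) \qquad \text{for all } \sigma \subseteq \mingens(I),
\]
together with its immediate consequence that, for $g \in \mingens(I)$ and $\tau \subseteq \mingens(I)$,
\[
g \mid \lcm(\tau) \ \Longleftrightarrow \ mg \mid \lcm(m\tau).
\]
From the first identity, $g$ is a bridge (respectively, gap) of $\sigma$ if and only if $mg$ is a bridge (respectively, gap) of $m\sigma$, and $\sbridge(m\sigma) = m \cdot \sbridge(\sigma)$ under the convention $m \cdot \emptyset = \emptyset$. From the divisibility equivalence, the condition in Proposition \ref{prop:Lyubeznik-subsets-in-general} for $\sigma$ to be Lyubeznik-critical is identical to the corresponding condition for $m\sigma$. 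The notion of true gap, and hence of non-true-gap witness, is defined purely in terms of bridges and gaps and the order, so it is also preserved.

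Assembling these, a subset $\sigma$ is type-$1$, potentially-type-$2$, or type-$2$ with respect to $(\succ)$ precisely when $m\sigma$ is so with respect to $(\succ_m)$; by Proposition \ref{prop:BM-subsets-in-general}, the same bijection matches Barile-Macchia-critical subsets on the two sides, and the bridge-friendly condition of Definition \ref{def:bridgefriendly} transfers as well. In particular, by Theorem \ref{thm:Morseres}, the Lyubeznik (respectively, Barile-Macchia) resolution of $S/mI$ with respect to $(\succ_m)$ is obtained from that of $S/I$ with respect to $(\succ)$ by shifting each multidegree by $m$, so one is minimal if and only if the other is, and $mI$ is bridge-friendly if and only if $I$ is. There is no real obstacle here; the only thing to be careful about is to work consistently with the transported order $(\succ_m)$, which then makes every verification immediate from the definitions.
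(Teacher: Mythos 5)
Your proof is correct and is exactly the argument the paper has in mind: the paper offers no written proof, introducing the statement with ``the following result is obvious from definition,'' and your verification via the bijection $g\mapsto mg$, the transported order, and the identity $\lcm(m\sigma)=m\cdot\lcm(\sigma)$ is precisely the routine check being alluded to. All the combinatorial notions (bridges, gaps, true gaps, the types, Lyubeznik-criticality) and the minimality of the resulting resolutions are indeed preserved, since the differential coefficients $\lcm(\sigma)/\lcm(\tau)$ are unchanged.
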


An affirmative answer to the following question would allow us to deduce the property of being Lyubeznik, Barile-Macchia, and bridge-friendly of a lower power of a monomial ideal from its higher powers.

\begin{proposition}\label{prop:HHZ-subideal-edgeideal}
    Let $G$ be the $P_4$, $C_3$, or $C_4$ graph. Then for each positive integer $n$, there exists a generator $f\in \mingens(I(G))$ such that the ideal $fI(G)^{n}$ is an HHZ-subideal of $I(G)^{n+1}$.
\end{proposition}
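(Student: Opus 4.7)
The plan is to exhibit, for each of the three graphs and each $n \ge 1$, an explicit edge $f \in \mingens(I(G))$ and monomial $m$ so that the identity $fI(G)^n = (I(G)^{n+1})^{\le m}$ can be verified by a variable-by-variable check on exponents. In all three cases I will take $f = x_1 x_2$ for an appropriate labelling of the vertices, and $m$ to be $x_1^{n+1} x_2^{n+1}$ times the remaining vertices each raised to the power $n$. Concretely: for $P_4$ labelled consecutively as $x_1, x_2, x_3, x_4$, use $m = x_1^{n+1} x_2^{n+1} x_3^n x_4^n$; for $C_3$ on vertices $x_1, x_2, x_3$, use $m = x_1^{n+1} x_2^{n+1} x_3^n$; for $C_4$ with cyclic ordering $x_1, x_2, x_3, x_4$, use $m = x_1^{n+1} x_2^{n+1} x_3^n x_4^n$.

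For $P_4$ and $C_3$ the verification will be immediate once one observes that each minimal generator of $I(G)^{n+1}$ admits a \emph{unique} expression of the form $e_1^a e_2^b e_3^c$ with $a + b + c = n+1$, where $e_1 = f$ and $e_2, e_3$ are the other two edges. Such a monomial lies in $fI(G)^n$ precisely when $a \ge 1$. I will expand $e_1^a e_2^b e_3^c$ in terms of the variables and verify that divisibility by $m$ collapses, after using $a + b + c = n+1$, to the single inequality $b + c \le n$, which is equivalent to $a \ge 1$. The remaining variable-by-variable inequalities are either automatic or implied.

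The main obstacle will be $C_4$, since the syzygy $e_1 e_3 = e_2 e_4$ means that distinct edge multisets can give the same monomial, so one cannot read off $a_1$ directly from the monomial. To handle this I will parameterize the minimal generators of $I(C_4)^{n+1}$ directly by their exponent vectors $(p_1, p_2, p_3, p_4) \in \ZZ_{\ge 0}^4$ satisfying $p_1 + p_3 = p_2 + p_4 = n+1$, and then solve the linear system $a_1 + a_4 = p_1$, $a_1 + a_2 = p_2$, $a_2 + a_3 = p_3$, $a_3 + a_4 = p_4$ for nonnegative integer edge multiplicities. A short analysis of the resulting one-parameter family of solutions will show that a decomposition with $a_1 \ge 1$ exists if and only if $p_1 \ge 1$ and $p_2 \ge 1$. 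Using the relations $p_1 + p_3 = p_2 + p_4 = n+1$, these inequalities translate exactly to $p_3 \le n$ and $p_4 \le n$, which are precisely the divisibility conditions imposed by $m$. Together with the trivial bounds $p_1, p_2 \le n+1$, this completes the identification of $(I(C_4)^{n+1})^{\le m}$ with $e_1 I(C_4)^n$.
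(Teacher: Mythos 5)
Your proposal is correct and takes essentially the same approach as the paper: choose an explicit edge $f$ and an explicit monomial $m$, then verify directly that the minimal generators of $I(G)^{n+1}$ dividing $m$ are exactly the generators of $fI(G)^n$. The only differences are cosmetic — the paper picks $f=x_3x_4$ for $P_4$ and $f=x_1x_4$ for $C_4$ with $m$ written as a product of edge powers, and for $C_4$ it disposes of the generators outside $fI(G)^n$ by noting they must omit a vertex of $f$ and hence be supported on a $3$-path (a quick degree count), whereas you solve the edge-multiplicity linear system; both verifications go through.
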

\begin{proof}
    Fix an integer $n$. We will prove the statement in each case.
    \begin{enumerate}
        \item $G = P_4$, i.e.,  $I\coloneqq I(G)=(x_1x_2,x_2x_3,x_3x_4)$. \newline
        Set $J \coloneqq (x_1x_2,x_2x_3)$, $m \coloneqq x_2^n(x_1x_3x_4)^{n+1}$, and $f \coloneqq x_3x_4$. We have
        \[
        I^{n+1}=J^{n+1}+fI^n.
        \]
        In fact, in this case, it is clear that
        \[
        \mingens(I^{n+1})=\mingens(J^{n+1}) \sqcup \mingens(fI^n).
        \]
        It is straightforward that all elements of $\mingens(I^{n+1})$ that divide $m$ come from $\mingens(fI^n)$. Therefore,  the ideal $fI^n$ is the HHZ-subideal of $I^{n+1}$ with respect to $m$, as desired.
        \item Similar arguments apply to the case where $G = C_3$, i.e., $I(G)=(x_1x_2,x_2x_3,x_1x_3)$.
        \item $G = C_4$, i.e., $I\coloneqq I(G)=(x_1x_2,x_2x_3,x_3x_4,x_1x_4)$. \newline
        Set $J \coloneqq (x_1x_2,x_2x_3,x_3x_4)$, $m \coloneqq (x_2x_3)^n(x_1x_4)^{n+1}$, and $f \coloneqq x_1x_4$. We have
        \[
        I^{n+1}=J^{n+1}+fI^n.
        \]
        We claim that the HHZ-subideal of $I^{n+1}$ with respect to $m$ is exactly $fI^n$. Indeed, consider any element $g$ in $\mingens(I^{n+1})$. If $g$ belongs to $fI^n$, then it clearly divides $m$. On the other hand, if it does not, i.e., $g$ is not divisible by $f=x_1x_4$, then $g$ must be of the form
        \[
        (x_1x_2)^a(x_2x_3)^{n+1-a} \text{ or } (x_2x_3)^a(x_3x_4)^{n+1-a}
        \]
        for some integer $a$. In either case, $g$ does not divide $m$. Thus the claim holds, and therefore $fI^n$ is an HHZ-subideal of $I^{n+1}$.\qedhere
    \end{enumerate}
\end{proof}

We remark that Proposition \ref{prop:HHZ-subideal-edgeideal} does not necessarily hold for monomial ideals or edge ideals in general. Consider $I=(xx_1,xx_2,xx_3)$. One can check that for any monomial $m$, the ideal $(I^3)^{\leq m}$ does not have the same total Betti numbers as $fI^2$ for any generator $f\in \mingens(I)$. This implies that $fI^2$ is not an HHZ-subideal of $I^3$. As a consequence, $fI(K_{1,3})^2$ is not an HHZ-subideal of $I(K_{1,3})^3$ for any generator $f\in \mingens(I(K_{1,3}))$.

\section{Powers of edge ideals with minimal Lyubeznik resolutions}

In this section, we classify all graphs $G$ and integers $n$ such that $I(G)^n$ has a minimal Lyubeznik resolution. We will begin with the case when $n=1$, which is considerably more complicated, and follow with the case when $n \ge 2$.

We shall first identify the forbidden structures for Lyubeznik graphs. The following statement is verified by \texttt{Macaulay2} \cite{M2} computations. We remark here that, to verify whether a Lyubeznik resolution is minimal, it suffices to check in characteristic 2. This is because the coefficients in the differentials of Lyubeznik resolutions are either 0 or a product of $\pm 1$ with a monomial. Thus all the \texttt{Macaulay2} verifications are done over the base field $\mathbb{Z}/2\mathbb{Z}$.  Lyubeznik-critical subsets of $\mingens(I)$ can be found in finite time just by definitions. We check if a monomial ideal is Lyubeznik by exhausting all of its Lyubeznik resolutions.

\begin{proposition}\label{prop:non-Lyubeznik-graphs}
    Let $G$ be one of the following graphs: \newline
    \begin{minipage}[t]{0.4\textwidth}
    \begin{enumerate}
        \item The $5$-path graph $P_5$.
        \item The $4$-cycle graph $C_4$.
        \item The $5$-cycle graph $C_5$.
        \item The complete graph $K_4$ \Kfoursymb. 
        \item The kite graph \kitesymb.
    \end{enumerate}
    \end{minipage}%
    \begin{minipage}[t]{0.5\textwidth}
    \begin{enumerate} 
    \setcounter{enumi}{5}
        \item The gem graph \gemsymb.
        \item The tadpole graph \tadpolesymb.
        \item The butterfly graph \butterflysymb.
        \item The net graph \sunletsymb{3}.
    \end{enumerate}
    \end{minipage}
    
    \noindent Then $I(G)$ is not Lyubeznik.
\end{proposition}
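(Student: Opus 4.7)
The overall plan is to prove, for each of the nine graphs $G$ listed, that \emph{every} total order on $\mingens(I(G))$ yields a non-minimal Lyubeznik resolution. By Theorem \ref{thm:Morseres}, the $i$-th free module in the Lyubeznik resolution with respect to a total order $(\succ)$ has rank equal to the number of Lyubeznik-critical $i$-subsets of $\mingens(I(G))$, and minimality is equivalent to this count equaling $\beta_i(S/I(G))$ for every $i$. Since $\beta_i$ is independent of $(\succ)$, it suffices to show that for every $(\succ)$ the number of Lyubeznik-critical subsets of some fixed size strictly exceeds the corresponding Betti number. As a preparation I would first compute the graded Betti numbers $\beta_{i,j}(S/I(G))$ for each of these nine small graphs (via Hochster's formula or direct computation) to fix the target numbers that need to be exceeded.

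For each total order I then enumerate Lyubeznik-critical subsets using Proposition \ref{prop:Lyubeznik-subsets-in-general}: a chain $m_1 \succ \cdots \succ m_k$ is critical iff, for every $1 < t \le k$, no generator smaller than $m_t$ divides $\lcm(m_1, \ldots, m_t)$. The number of orders to inspect is reduced by quotienting by the induced action of $\mathrm{Aut}(G)$ on $\mingens(I(G))$. For the four smallest cases ($P_5, C_4, C_5, K_4$, all with at most $6$ edges) this reduction is drastic enough that the analysis can be carried out by hand: fix the largest generator using symmetry, branch on the second-largest, and verify in each branch that a redundant critical pair appears — typically a $3$-subset whose $\lcm$ agrees with that of one of its $2$-subsets, or a $4$-subset of the form $\{m_1, m_2, m_3, m_4\}$ whose $\lcm$ is forced to match a critical $3$-subset. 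For the remaining cases (kite, gem, tadpole, butterfly, net), the orbit count is still large, so the verification is delegated to \texttt{Macaulay2}; because every Lyubeznik differential has coefficients that are $\pm 1$ times a monomial, it suffices to check non-minimality over $\mathbb{Z}/2\mathbb{Z}$, which keeps the symbolic computation tractable.

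The main obstacle is the absence of a single uniform obstruction. Each of the nine graphs is a minimal forbidden pattern for a structurally different reason, so a cleaner proof would need to exhibit, for each graph individually and for each equivalence class of orderings, a specific Lyubeznik-critical subset $\sigma$ whose $\lcm$ coincides with that of some Lyubeznik-critical subset of size $\card{\sigma}-1$ — exactly the combinatorial bookkeeping that the computer does. I expect the proof will therefore read as nine essentially independent case analyses unified only by the common critical-subset criterion of Proposition \ref{prop:Lyubeznik-subsets-in-general} and the common Betti-number comparison; the Restriction Lemma (Lemma \ref{lem:HHZ-BM-Lyubeznik}) then does the real conceptual work later, by propagating these nine local failures to every graph containing one of them as an induced subgraph.
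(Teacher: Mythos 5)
Your proposal matches the paper's approach: the paper's entire proof is an exhaustive \texttt{Macaulay2} verification over $\ZZ/2\ZZ$ of all total orders on $\mingens(I(G))$ for each of the nine graphs, using exactly the reduction you describe (coefficients of Lyubeznik differentials are $\pm 1$ times monomials, so characteristic~$2$ suffices), and the minimality test you arrive at in your last paragraph is the paper's own criterion that a Lyubeznik resolution is minimal iff no Lyubeznik-critical subset has a bridge. Your suggested hand-verification for the smaller graphs and the $\mathrm{Aut}(G)$ reduction are reasonable refinements but not present in the paper, which delegates all nine cases to the computer.
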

\begin{proof}
    Verified with \texttt{Macaulay2} computations over the base field $\ZZ/2\ZZ$.
\end{proof}

Proposition \ref{prop:non-Lyubeznik-graphs}, coupled with Corollary \ref{cor:induced-subgraph-Morse}, shows that any graph, whose edge ideal is Lyubeznik, cannot contain the graphs listed in Proposition \ref{prop:non-Lyubeznik-graphs} as induced subgraphs. 
The following construction plays a key role in the classification of Lyubeznik graphs.

\begin{definition} \label{def:Labc}
    Let $a,b,c$ be nonnegative integers. We define $L(a,b,c)$ to be the graph whose vertex and edge sets are:
    \[
    V(L(a,b,c)) = \{x,y\} \sqcup \{x_i\}_{i = 1}^{a} \sqcup \{y_j\}_{j = 1}^{b} \sqcup \{z_k\}_{k = 1}^{c}, 
    \]
    \[
     E(L(a,b,c)) = \{xy\} \sqcup \{xx_i\}_{i = 1}^{a} \sqcup \{yy_j\}_{j = 1}^{b} \sqcup \{xz_k\}_{k = 1}^{c} \sqcup \{yz_k\}_{k = 1}^{c}.
    \]
\end{definition}

Roughly speaking, $L(a,b,c)$ consists of exactly $c$ triangles $xyz_k$, for $k = 1, \dots, c$, that share a common edge $\{x,y\}$, together with $a$ leaves $\{x,x_i\}$, for $i = 1, \dots, a$, attached to $x$, and $b$ leaves $\{y,y_j\}$, for $j = 1, \dots, b$, attached to $y$. In particular, $L(a, b, c)$ has $a+b+c+2$ vertices.

\begin{example} \quad \quad
    \begin{enumerate}
        \setcounter{enumi}{-1}
        \item By convention, $L(0, 0, 0)$ is the graph consisting of exactly one edge.
        \item The graph $L(0,0,c)$ is exactly $c$ triangles sharing one edge. In particular, $L(0,0,1)$ is the triangle $C_3$, and $L(0,0,2)$ is a diamond \diamondsymb.
        \item The graph $L(a,0,0)$ is the star graph $K_{1, a+1}$.
        \item The graph $L(a,b,0)$ is a gap-free tree. In fact, $\left\{L(a,b,0) ~\middle|~ a,b \in \mathbb{Z}_{\ge 0}\right\}$ are exactly the connected graphs that admit minimal Scarf resolutions \cite[Theorem 8.3]{FHHM24}.
    \end{enumerate}
\end{example}

The forbidden structures in Proposition \ref{prop:non-Lyubeznik-graphs} can now be characterized using graphs of the form $L(a,b,c)$.

\begin{proposition} \label{prop:LyubeznikClass}
    A connected graph is $L(a,b,c)$ for some integers $a,b,$ and $c$ if and only if it does not contain, as an induced subgraph, any of the graphs listed in Proposition \ref{prop:non-Lyubeznik-graphs}.
\end{proposition}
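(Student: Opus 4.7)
I would prove the biconditional in two directions.

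The forward direction is a direct structural check against the explicit construction in Definition \ref{def:Labc}. The key observation is that $\{x, y\}$ is a cut pair of $L(a,b,c)$: removing both vertices leaves the independent set $\{x_i\} \cup \{y_j\} \cup \{z_k\}$. Hence every induced cycle contains both $x$ and $y$, and since $xy$ is an edge, every induced cycle is a triangle of the form $\{x, y, z_k\}$. This immediately yields $C_4$-freeness and $C_5$-freeness, and shows that any two triangles share the edge $xy$ (so no butterfly). $K_4$-freeness follows from $z_i \not\sim z_j$. Induced paths have length at most $3$ because their interior non-leaf vertices lie in $\{x, y, z_k\}$, giving $P_5$-freeness. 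The kite, tadpole, and net each require a pendant on the third vertex of a triangle, but every $z_k$ has neighborhood exactly $\{x, y\}$. The gem is excluded because the neighborhood of any vertex of $L(a,b,c)$ induces a star together with isolated vertices, never a $P_4$.

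For the reverse direction, assume $G$ is connected and omits all nine forbidden subgraphs. I would first observe that $G$ is chordal: $C_4$ and $C_5$ are directly excluded, and any induced $C_n$ with $n \geq 6$ contains an induced $P_5$. If $G$ has no triangle, then $G$ is a tree, and $P_5$-freeness forces its diameter to be at most $3$. The connected trees of diameter at most $3$ are precisely edges, stars, and double stars, which coincide with the family $L(a, b, 0)$.

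Suppose $G$ contains a triangle. I would then show that all triangles of $G$ share one common edge $\{x, y\}$. Butterfly-freeness forces two triangles sharing any vertex to share an edge. For disjoint triangles, I would take a shortest path between them: its length is at most $2$ by $P_5$-freeness (after accounting for extensions by triangle vertices), and the resulting configurations produce either an induced $P_5$ or a tadpole; longer shortest paths yield an induced $P_5$ directly. Finally, if two triangles share an edge $\{x, y\}$ while a third shares a different edge through $x$ or $y$, the induced subgraph on the five involved vertices is forced to contain a gem, a kite, or a $K_4$. Letting $z_1, \dots, z_c$ be the common neighbors of $x$ and $y$, any extra neighbor of $z_k$ outside $\{x, y\}$ yields a kite, net, or tadpole depending on its adjacencies, so each $z_k$ has neighborhood exactly $\{x, y\}$.

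To finish, I would show every remaining vertex is a leaf at $x$ or $y$. By $P_5$-freeness and connectedness, every vertex is at distance at most $2$ from $\{x, y\}$, and distance exactly $2$ is ruled out by the already-established structure around the $z_k$'s. A vertex $v$ adjacent only to $x$ (and distinct from each $z_k$) cannot have any further neighbor $w$: according to whether $w \in \{y\} \cup \{z_k\}$, a further $x$-neighbor, or something at greater distance from $\{x,y\}$, one of $C_4$, gem, kite, or $P_5$ arises; the analogous statement holds symmetrically for $y$. Hence $G = L(a, b, c)$. The main obstacle is the triangle-merging step: ruling out disjoint triangles and ruling out triangles that share different edges both require invoking several distinct forbidden subgraphs case by case. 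Once a unique central edge $\{x, y\}$ has been identified, the vertex-by-vertex analysis in the last step is mechanical.
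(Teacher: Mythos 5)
Your overall strategy is sound and genuinely different from the paper's: instead of splitting on whether $G$ contains a diamond and invoking a $1$-cutset theorem for chordal graphs in the diamond-free case, you merge all triangles onto a single common edge by direct pairwise analysis and then classify the remaining vertices around that edge. That route is more self-contained (it avoids the external structure theorem the paper cites), and your forward direction is more careful than the paper's one-line dismissal. However, there is a concrete gap in the case where $G$ contains exactly one triangle, i.e.\ $c=1$: the statement ``all triangles share one common edge $\{x,y\}$'' does not determine $\{x,y\}$ when the triangle is unique, and for a bad choice of this edge your two key claims fail. The paw graph (a triangle $uvw$ plus a pendant $p$ at $w$) contains none of the nine forbidden subgraphs, yet if one takes $\{x,y\}=\{u,v\}$ then $z_1=w$ has the extra neighbour $p$ and no kite, net, or tadpole (nor any other forbidden graph) arises; moreover $p$ lies at distance exactly $2$ from $\{x,y\}$, contradicting your claim that such vertices are ruled out. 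The same happens for the bull $L(1,1,1)$. Both graphs are of the form $L(a,b,1)$, but only for the correct choice of central edge, so the argument as written would wrongly reject them.

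The gap is reparable. When the triangle is unique, first observe that any neighbour of a triangle vertex outside the triangle must be a leaf (otherwise a tadpole or an induced $P_5$ appears) and that at most two of the three triangle vertices can carry leaves (otherwise a net appears); then \emph{define} $\{x,y\}$ to be an edge of the triangle containing every vertex that carries a leaf. With that choice your final paragraph goes through. This is essentially the role played in the paper's proof by the $1$-cutset vertex supplied by \cite[Theorem 2.2]{NK08}: the cut vertex is automatically a triangle vertex with attachments, which pins down the central edge without further fuss; for $c\ge 2$ the central edge is forced by the diamond and your argument is fine as is. A smaller imprecision: ``butterfly-freeness forces two triangles sharing a vertex to share an edge'' also needs the gem, $K_4$, and $C_4$ exclusions to dispose of cross edges between the two triangles; you acknowledge a case analysis later, but the attribution to the butterfly alone is not accurate.
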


\begin{proof}
    It is clear that for any $a,b,$ and $c$, the graph $L(a,b,c)$ does not contain any of the graphs listed in Proposition \ref{prop:non-Lyubeznik-graphs} as an induced subgraph. We will hence only show the other direction. Let $G$ be a connected graph that does not contain any of the forbidden graphs. We remark that since $G$ does not contain $P_5$, $C_4$, or $C_5$, it does not contain $P_n$ for any $n\geq 5$ or $C_n$ for any $n\geq 4$, either. In particular, $G$ is chordal. Moreover, since $G$ does not contain the $K_4$ graph, it does not contain the $K_n$ graph for any $n\geq 4$.
    
    Suppose $G$ contains a cycle with a unique chord as an induced subgraph. By Proposition \ref{prop:non-Lyubeznik-graphs} (2), that cycle must be a diamond. So we can assume that $G$ contains the diamond graph \diamondsymb \ formed by $w,x,y,z$ with a chord $wy$. We will show that $G$ is exactly $L(a,b,c)$ for some integers $a,b,c$. Indeed, consider any vertex $u$ of $G$ that does not belong to this diamond. It suffices to show that $N(u)$, the set of neighbors of $u$ among $a,b,c,d$, is $\{w\}$, $\{y\}$ or $\{w,y\}$. Suppose otherwise. Then by symmetry we can assume that $x\in N(u)$. we will derive a contradiction after considering all possibilities.
    \begin{itemize}
        \item $N(u)=\{x\}$: Then $G$ contains the kite graph \kitesymb, a contradiction.
        \item $N(u)=\{x,w\}$ or $N(u)=\{x,y\}$: Then $G$ contains the gem graph \gemsymb\  formed by $w,x,y,z,u$, a contradiction.
        \item $N(u)=\{x,z\}$: Then $G$ contains the $C_4$ graph formed by $w,x,u,z$, a contradiction.
        \item $N(u)=\{x,w,y\}$: Then $G$ contains the $K_4$ graph formed by $w,x,y,u$, a contradiction.
        \item $N(u)=\{x,w,z\}$ (or $N(u)=\{x,y,z\}$, resp): Then $G$ contains the $C_4$ graph formed by $x,y,z,u$ (or $w,x,y,u$, resp), a contradiction.
        \item $N(u)= \{x,w,y,z\}:$ Then $G$ contains the $K_4$ graph formed by $w,x,y,u$, a contradiction.
    \end{itemize}
    
    We shall consider the case that $G$ does not contain a cycle with a unique chord. If $G$ is a tree, it is straightforward from Proposition \ref{prop:non-Lyubeznik-graphs} (1) that $G$ is $L(a,b,0)$ for some integers $a,b$. If $G$ is not a tree, then by Proposition \ref{prop:non-Lyubeznik-graphs} (2), it contains a triangle, whose vertices we shall denote by $x,y,z$. By \cite[Theorem 2.2]{NK08}, either $G$ is a complete graph, or one vertex of the maximal complete induced subgraph that contains this triangle is a 1-cutset of $G$.  Since $G$ cannot contain any $K_n$ graph for any $n\geq 4$, the triangle formed by $a,b,c$ must be the maximal complete induced subgraph of $G$ that contains itself. Thus, we may assume that $z$ is a 1-cutset of $G$. By definition, this means that the graph $G\setminus \{z\}$ has at least two connected components. Let $V$ denote the set of vertices in the connected component that contains $x$ and $y$, and $W$ the set of the remaining vertices. 

    Since $G$ does not contain the tadpole graph \tadpolesymb, no vertex $w\in W$ satisfies $\dist_G(w,z)\geq 2$. In other words, all vertices in $W$ are connected to $z$ in $G$. On the other hand, since $G$ does not contain the butterfly graph, no two vertices in $W$ are connected. Therefore, the subgraph induced by $x,y,z$ and vertices in $W$ is:

    \begin{center}
    	\cricketsymb
    \end{center}
    
    Now, we consider vertices in $V$. Since $G$ does not contain any $n$-path graph $P_n$ where $n\geq 5$, no vertex $v\in V$ satisfies $\dist_G(v,x)\geq 2$ or $\dist_G(x,y)\geq 2$. In other words, all vertices in $V$ are connected to either $x$ or $y$ in $G$. Since $G$ does not the kite graph \kitesymb, no vertex in $V$ is connected to both $x$ and $y$ in $G$. Suppose there exist two vertices $x',y'\in X$ such that $x'$ is connected to $x$, not $y$, and $y'$ is connected to $y$, not $x$. Then $G$ either contains a $C_4$ graph formed by $x',y',x,y$ (if $x'$ and $y'$ are connected), or the net graph \sunletsymb{3} formed by $x,y,z,x',y',z'$ where $z'$ is a fixed vertex in $W$, a contradiction either way. Thus all vertices in $V$ are either connected to $x$ and not $y$, or vice versa. Thus $G=L(a,b,1)$ where $a=\card{V}-2$ and $b=\card{W}$.
\end{proof}

Before proceeding to showing that the edge ideals of $L(a,b,c)$ graphs are Lyubeznik, we make the following observation.

\begin{remark} Let $\mathcal{F}$ be a Lyubeznik resolution of $S/I$, where $I \subseteq S$ is a monomial ideal. Then, by Theorem \ref{thm:Morseres}, for any integer $i$, a basis of $\mathcal{F}_i$ can be identified with the collection of Lyubeznik-critical subsets of Mingens(I)
with exactly $i$ elements. Moreover, the differentials are as follows:
\[
\partial(\sigma) = \sum_{\tau\subseteq \sigma, \card{\tau}=\card{\sigma}-1} \pm 1 \frac{\lcm(\sigma)}{\lcm(\tau)} \tau,
\]
where $\sigma$ is a Lyubeznik-critical subset. From this description, it is easy to see that a Lyubeznik resolution is minimal if and only if each Lyubeznik-critical subset does not have any bridge. We shall make use of this observation often in this section. 
\end{remark}

\begin{proposition}\label{prop:Lyubeznik-graphs}
    If $G = L(a,b,c)$, for some nonnegative integers $a,b,$ and $c$, then the edge ideal $I(G)$ is Lyubeznik.
\end{proposition}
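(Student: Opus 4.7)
The plan is to exhibit an explicit total order $\succ$ on $\mingens(I(L(a,b,c)))$ for which the induced Lyubeznik resolution of $S/I(L(a,b,c))$ is minimal. By the remark immediately preceding this proposition, minimality amounts to verifying that no Lyubeznik-critical subset of $\mingens(I(L(a,b,c)))$ possesses a bridge.

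Motivated by the small cases (the triangle $L(0,0,1)$ and the diamond $L(0,0,2)$, which a short hand-computation handles), I would choose the order
\[
    xz_1 \succ yz_1 \succ xz_2 \succ yz_2 \succ \cdots \succ xz_c \succ yz_c \succ xx_1 \succ \cdots \succ xx_a \succ yy_1 \succ \cdots \succ yy_b \succ xy,
\]
placing the triangle edges first (paired), then the $x$-leaves, then the $y$-leaves, and finally the central edge $xy$ as the minimum. The guiding intuition is that $xy$ divides $\lcm(xz_k, yz_k)$ for every $k$, so placing $xy$ at the bottom causes the ``two-sides-of-a-triangle'' subsets $\{xz_k, yz_k\}$ and their extensions to fail criticality, thereby eliminating the most obvious potential bridges. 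A trivial-order attempt with $xy$ at the top fails already for the diamond.

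The verification then splits into two parts. First, a general argument (valid for any monomial ideal) shows that the smallest element $m_k$ of any Lyubeznik-critical subset $\sigma = \{m_1 \succ \cdots \succ m_k\}$ is never a bridge: if $m_k \mid \lcm(m_1, \ldots, m_{k-1})$, then applying Proposition \ref{prop:Lyubeznik-subsets-in-general} with $t = k-1$ and $m = m_k$ contradicts criticality. Second, and more substantively, one rules out that any non-smallest element $m_t$ is a bridge. The approach is to use the criticality condition at each stage $t \leq k$ to constrain which generators can appear in $\sigma$, and then perform a case analysis on the type of $m_t$ (namely whether it is $xy$, a leaf edge $xx_i$ or $yy_j$, or a triangle edge $xz_k$ or $yz_k$). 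In each case, the constraints imposed by criticality force every variable of $m_t$ to be uniquely supplied by $m_t$ within $\sigma$, so that removing $m_t$ must strictly decrease $\lcm(\sigma)$.

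The main obstacle lies in the case involving multiple triangle edges: when $\sigma$ contains both $xz_{k_i}$'s and $yz_{\ell_j}$'s coming from different triangles, many generators may divide the resulting LCM, and careful bookkeeping of which $z$-variables are covered by which triangle edges is needed to verify the criticality and bridge-free conditions simultaneously. An alternative route that sidesteps the full case analysis would be induction on $a + b + c$: the base case $L(0,0,0)$ gives the ideal $(xy)$, which is trivially Lyubeznik, and the inductive step would adjoin one leaf or one triangle at $x$ (or $y$); this would rely on an auxiliary lemma asserting that adjoining a new generator involving a fresh variable as the minimum in the order preserves the Lyubeznik property, which is tractable directly from the definitions together with Lemma~\ref{lem:HHZ-BM-Lyubeznik}.
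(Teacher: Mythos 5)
Your strategy coincides with the paper's: pick a total order in which $xy$ is the $\succ$-minimum and show that no Lyubeznik-critical subset has a bridge. Your proposed order does work --- but only because $xy$ is placed last, and your write-up stops short of the one observation that makes the verification go through, instead flagging it as ``the main obstacle.'' You note that $xy \mid \lcm(xz_k,yz_k)$ kills same-triangle pairs, but the same observation applies to \emph{every} pair consisting of an edge containing $x$ and an edge containing $y$ (both distinct from $xy$), since every edge of $L(a,b,c)$ other than $xy$ contains exactly one of $x,y$. Concretely: if a critical $\sigma=\{m_1\succ\cdots\succ m_k\}$ contained $e_1\ni x$ and $e_2\ni y$ with $e_1,e_2\neq xy$, then at the first index $t$ for which $\{m_1,\dots,m_t\}$ contains both, one has $m_t\succ xy$ and $xy\mid\lcm(m_1,\dots,m_t)$, contradicting Proposition \ref{prop:Lyubeznik-subsets-in-general}. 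Hence every critical $\sigma$ satisfies $\sigma\subseteq\{xy\}\cup\{xx_i\}_i\cup\{xz_k\}_k$ or $\sigma\subseteq\{xy\}\cup\{yy_j\}_j\cup\{yz_k\}_k$, and in either case each element of $\sigma$ carries a variable occurring in no other element of $\sigma$ (namely $y$ for $xy$, $x_i$ for $xx_i$, and $z_k$ for $xz_k$, since $yz_k\notin\sigma$), so $\sigma$ has no bridge. The ``multiple triangles'' case you worry about is therefore vacuous: those sets are never critical. This is exactly the paper's argument, with the order $yy_b\succ\cdots\succ yy_1\succ xx_a\succ\cdots\succ xx_1\succ yz_c\succ\cdots\succ yz_1\succ xz_c\succ\cdots\succ xz_1\succ xy$; only the minimality of $xy$ is used.

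Two smaller points. Your general claim that the $\succ$-smallest element of a critical set is never a bridge is correct, but the justification via $t=k-1$ requires $k\ge 3$; for $k\le 2$ it holds only because one minimal generator cannot divide another. More importantly, your fallback inductive route is not a proof as stated: the auxiliary lemma you would need (that adjoining a generator involving a fresh variable at the bottom of the order preserves minimality of the Lyubeznik resolution) does not follow from Lemma \ref{lem:HHZ-BM-Lyubeznik}, which transfers minimality \emph{down} to HHZ-subideals, not up to extensions, and it would in any case have to be proved with care since new critical sets containing the new generator appear. I would drop that route and finish the direct argument as above.
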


\begin{proof}
    If $G=L(a,0,0)=K_{1,a+1}$, then it is clear that $I(G)$ is Taylor, and thus any Lyubeznik resolution of $I(G)$ is minimal. For the rest of the proof, we assume that $G=L(a,b,c)$ where $a\geq 1$ or $c\geq 1$. Consider the following total order on $\mingens(I(G))$:
    \[
    yy_b \succ \cdots \succ yy_1 \succ xx_a \succ \cdots \succ xx_1 \succ yz_c \succ \cdots \succ yz_1 \succ xz_c \succ \cdots \succ xz_1 \succ xy.
    \]
    We will show that the Lyubeznik resolution of $I(G)$ induced from this total order is minimal. By definition, it suffices to show that any Lyubeznik-critical subset $\sigma$ of $\mingens(I(G))$ has no bridge.  
    
    Indeed, since $\sigma$ is Lyubeznik-critical, we have
    \[
    xy\nmid \sigma\cap \{e\in E(G)\colon e\succ xy\}.
    \]
    In particular, if an edge incident to $x$ is in $\sigma$, then $\sigma$ does not contain any edge incident to $y$, and vice versa. Suppose that $\sigma$ contains a leaf edge which, without loss of generality, we can assume to be $xx_1$. Then $\sigma$ is a subset of
    \[
    \{xy\} \sqcup \{xx_i\}_{i=1}^a \sqcup \{xz_k\}_{k=1}^c,
    \]
    and thus has no bridge. On the other hand, suppose that $\sigma$ contains no leaf edges. If $\sigma=\{xy\}$, then it clearly has no bridge. Otherwise, without loss of generality, we can assume that $\sigma$ contains $xz_1$. By the same observation, $\sigma$ is then a subset of 
    \[
    \{xy\} \sqcup \{xz_k\}_{k=1}^c,
    \]
    and thus has no bridge. This concludes the proof.
\end{proof}

Combining the preceding three propositions, we obtain the first main result of the paper.

\begin{theorem}\label{thm:Lyubeznik-graphs}
    The edge ideal $I(G)$ is Lyubeznik if and only if $G$ is  $L(a,b,c)$ for some nonnegative integers $a,b,c$.
\end{theorem}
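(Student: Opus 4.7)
The proof should simply stitch together the three propositions (L1), (L2), (L3) which have already been established in the section. My plan is a short two-paragraph argument handling each direction in turn.

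For the forward implication, I would proceed by contrapositive. Suppose $G$ is not of the form $L(a,b,c)$ for any nonnegative integers $a,b,c$. By Proposition \ref{prop:LyubeznikClass}, $G$ must contain, as an induced subgraph, at least one of the nine graphs listed in Proposition \ref{prop:non-Lyubeznik-graphs} (namely $P_5$, $C_4$, $C_5$, $K_4$, the kite, the gem, the tadpole, the butterfly, or the net). Call this induced subgraph $H$. By Proposition \ref{prop:non-Lyubeznik-graphs}, $I(H)$ does not admit a minimal Lyubeznik resolution. Letting $m$ denote the product of the vertices of $H$, we have $I(H) = I(G)^{\le m}$, so by Corollary \ref{cor:induced-subgraph-Morse} (which is the key Restriction-Lemma consequence for HHZ-subideals, Lemma \ref{lem:HHZ-BM-Lyubeznik}), if $I(G)$ were Lyubeznik, then so would $I(H)$ be, a contradiction. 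Hence $I(G)$ is not Lyubeznik.

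For the reverse implication, assume $G = L(a,b,c)$ for some nonnegative integers $a,b,c$. Then Proposition \ref{prop:Lyubeznik-graphs} directly produces a total order on $\mingens(I(G))$ with respect to which every Lyubeznik-critical subset is bridge-free, hence the associated Lyubeznik resolution of $S/I(G)$ is minimal.

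Since the three propositions being invoked have already been proved, there is essentially no obstacle: the theorem is a formal two-line corollary of (L1)--(L3) once the restriction lemma for Lyubeznik resolutions (Lemma \ref{lem:HHZ-BM-Lyubeznik}, applied through Corollary \ref{cor:induced-subgraph-Morse}) is cited to pass from non-Lyubeznik induced subgraphs to non-Lyubeznik supergraphs. The only thing to be slightly careful about is phrasing the contrapositive cleanly so that the reader sees both where the forbidden-subgraph list enters and why hereditariness under induced subgraphs is the correct tool.
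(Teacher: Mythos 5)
Your proposal is correct and follows exactly the paper's route: the theorem is obtained by combining Proposition \ref{prop:non-Lyubeznik-graphs} (forbidden structures), Proposition \ref{prop:LyubeznikClass} (graph-theoretic classification), and Proposition \ref{prop:Lyubeznik-graphs} (sufficiency), with Corollary \ref{cor:induced-subgraph-Morse} supplying the hereditariness under induced subgraphs. Nothing is missing; your write-up simply makes explicit the contrapositive step that the paper leaves implicit.
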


\begin{remark} \label{rm:disconnected} 
Theorem \ref{thm:Lyubeznik-graphs} can be naturally extended to work over disconnected graphs. This is because the resolution of the edge ideal of a disconnected graph can be obtained by tensoring the corresponding resolutions of its connected components.
\end{remark}

For the remaining of this section, we will investigate Lyubeznik resolutions of higher powers of edge ideals of graphs.

\begin{proposition}\label{prop:non-Lyubeznik-powers}
    Let $G$ be the $K_{1,3}$, $P_4$, $C_3$, or $C_4$ graph.   Then, the ideal $I(G)^2$ is not Lyubeznik. Moreover, if $G$ is among the last three graphs of the given list, then $I(G)^n$ is not Lyubeznik for any $n\geq 2$.
\end{proposition}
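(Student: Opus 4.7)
The proof naturally splits into two parts: a computational verification of the base case $n = 2$ for each of the four graphs, and an inductive step for $G \in \{P_4, C_3, C_4\}$ that lifts the base case to all $n \ge 2$ via the HHZ-subideal machinery of Section~\ref{section2}.

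For the base case, fix $G \in \{K_{1,3}, P_4, C_3, C_4\}$. Each $I(G)^2$ has a small generating set, so one can enumerate all Lyubeznik-critical subsets with respect to any prescribed total order on $\mingens(I(G)^2)$ via Proposition~\ref{prop:Lyubeznik-subsets-in-general}, and then check that no choice of total order yields a minimal Lyubeznik resolution. Since the coefficients in the differentials of any Lyubeznik resolution are $0$ or $\pm 1$ times a monomial, minimality can be tested in characteristic $2$. Thus the whole check is finite and is carried out in \texttt{Macaulay2} over $\ZZ/2\ZZ$, exactly as in the proof of Proposition~\ref{prop:non-Lyubeznik-graphs}.

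For the inductive step, restrict to $G \in \{P_4, C_3, C_4\}$ and assume $I(G)^n$ is not Lyubeznik for some $n \ge 2$. By Proposition~\ref{prop:HHZ-subideal-edgeideal}, there exist a generator $f \in \mingens(I(G))$ and a monomial $m$ such that $fI(G)^n = (I(G)^{n+1})^{\le m}$; in particular, $fI(G)^n$ is an HHZ-subideal of $I(G)^{n+1}$. If $I(G)^{n+1}$ were Lyubeznik, the Restriction Lemma for Lyubeznik resolutions (Lemma~\ref{lem:HHZ-BM-Lyubeznik}) would force $fI(G)^n$ to be Lyubeznik as well, and then Proposition~\ref{prop:mI} would transfer this property to $I(G)^n$ itself, contradicting the inductive hypothesis. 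Hence $I(G)^n$ is not Lyubeznik for every $n \ge 2$. Note that $K_{1,3}$ is deliberately excluded from this induction: as observed in the paragraph following Proposition~\ref{prop:HHZ-subideal-edgeideal}, no ideal of the form $f I(K_{1,3})^n$ is an HHZ-subideal of $I(K_{1,3})^{n+1}$, so the reduction step is simply unavailable there, which is why the star case is only claimed at $n=2$.

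The main obstacle is the base-case verification itself: unlike the combinatorial classifications in Section~3, there is no clean single-subset witness for the failure of the Lyubeznik property for $I(G)^2$, and one must really invoke the exhaustive \texttt{Macaulay2} check across all admissible total orders. Once those four computations are accepted, the rest of the statement is a purely formal application of Proposition~\ref{prop:HHZ-subideal-edgeideal}, Lemma~\ref{lem:HHZ-BM-Lyubeznik}, and Proposition~\ref{prop:mI}.
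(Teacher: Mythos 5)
Your proposal is correct and follows essentially the same route as the paper: an exhaustive computational check of the $n=2$ base cases combined with Proposition~\ref{prop:HHZ-subideal-edgeideal}, Lemma~\ref{lem:HHZ-BM-Lyubeznik}, and Proposition~\ref{prop:mI} for the passage to higher powers (the paper leaves the appeal to Proposition~\ref{prop:mI} implicit, so making it explicit is a slight improvement). The one practical difference is the $C_4$ base case: $I(C_4)^2$ has $9$ minimal generators, so rather than exhausting all $9!$ total orders directly as you suggest, the paper first restricts to the $6$-generator HHZ-subideal $(I(C_4)^2)^{\le (x_1x_2x_3)^2x_4}$, verifies that this smaller ideal is not Lyubeznik, and concludes via Lemma~\ref{lem:HHZ-BM-Lyubeznik}; your direct check is valid in principle but computationally heavier.
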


\begin{proof}
    For the first three graphs in the list, $I(G)^2$ is generated by 6 monomials, so there are $6!$ possibilities. The first assertion for these graphs can be verified by Macaulay2 computations. For $C_4$, one can verify with Macaulay2 that the ideal $(x_1^2x_2^2,\ x_2^2x_3^2,\ x_1^2x_2x_4,\ x_1x_2^2x_3,\ x_2x_3^2x_4,\ x_1x_2x_3x_4)$, an HHZ-subideal of $(x_1x_2,\ x_2x_3,\ x_3x_4,\ x_1x_4)^2$ with respect to $(x_1x_2x_3)^2x_4$, does not admit a minimal Lyubeznik resolution. Thus, the first assertion for $C_4$ follows from Lemma \ref{lem:HHZ-BM-Lyubeznik}.

    The second statement is a direct consequence of the first assertion, Proposition \ref{prop:HHZ-subideal-edgeideal}, and Lemma \ref{lem:HHZ-BM-Lyubeznik}.
\end{proof}

We are now ready to state the next main result of this section.

\begin{theorem}\label{thm:Lyubeznik-powers}
    Let $G$ be a graph and let $n \ge 2$ be a positive integer. Then the ideal $I(G)^n$ is Lyubeznik if and only if one of the following holds:
    \begin{enumerate}
        \item $G$ is an edge; or 
        \item $n=2$ and $G$ is a path of length 2.
    \end{enumerate}
\end{theorem}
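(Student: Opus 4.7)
The plan is to dispose of sufficiency via an explicit ordering and then reduce the necessity to a single combinatorial check for $(y_1,y_2)^3$. For sufficiency, if $G$ is an edge then $I(G)^n$ is principal and hence trivially Lyubeznik; if $G = P_3$ and $n = 2$, then $I(P_3)^2 = x_2^2 (x_1,x_3)^2$, and by Proposition~\ref{prop:mI} it suffices to show $(x_1,x_3)^2$ is Lyubeznik. The total order $x_1^2 \succ x_3^2 \succ x_1x_3$ produces Lyubeznik-critical subsets exactly $\emptyset$, the three singletons, and $\{x_1^2, x_1x_3\}, \{x_1x_3, x_3^2\}$, matching the minimal Betti ranks $(1,3,2)$.

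For necessity, I would invoke the graph-theoretic fact that any connected graph on at least four vertices contains $P_4$, $C_3$, or $C_4$ as an induced subgraph unless it is a star $K_{1,m}$. Indeed, avoiding $C_3$ and $C_4$ forces girth at least $5$, but any cycle of length $\ge 5$ contains an induced $P_4$; so the graph must be a tree, and a tree with no induced $P_4$ has diameter at most $2$, hence is a star. Combining this dichotomy with Proposition~\ref{prop:non-Lyubeznik-powers} and Corollary~\ref{cor:induced-subgraph-Morse}, we need only deal with the stars $G = K_{1,m}$ together with $G = C_3$, the latter being immediately covered by Proposition~\ref{prop:non-Lyubeznik-powers}. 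Among stars, the cases $m \ge 3$, $n = 2$ follow from the induced $K_{1,3}$ and Proposition~\ref{prop:non-Lyubeznik-powers}, leaving only $m \ge 2$ (which includes $P_3 = K_{1,2}$) with $n \ge 3$.

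To handle these remaining star cases uniformly, note that $I(K_{1,m})^n = x^n(y_1,\dots,y_m)^n$ is Lyubeznik iff $(y_1,\dots,y_m)^n$ is, by Proposition~\ref{prop:mI}. The HHZ-subideal of $(y_1,\dots,y_m)^n$ with respect to $y_1^n y_2^n$ equals $(y_1,y_2)^n$ (its generators are those $y_1^{a_1}\cdots y_m^{a_m}$ with $a_i = 0$ for $i \ge 3$), and the HHZ-subideal of $(y_1,y_2)^n$ with respect to $y_1^n y_2^3$ equals $y_1^{n-3}(y_1,y_2)^3$ when $n \ge 3$. So by Lemma~\ref{lem:HHZ-BM-Lyubeznik} and Proposition~\ref{prop:mI} once more, the entire necessity direction reduces to showing that $(y_1,y_2)^3$ is not Lyubeznik.

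This base case is the main (and essentially only) obstacle. With generators $g_0 = y_1^3,\ g_1 = y_1^2y_2,\ g_2 = y_1y_2^2,\ g_3 = y_2^3$, the minimal Betti numbers of $S/(y_1,y_2)^3$ are $(1,4,3)$, so a minimal Lyubeznik resolution would require exactly three critical $2$-subsets. The three consecutive pairs $\{g_i,g_{i+1}\}$ have no extra lcm-divisors and are therefore critical under every total order. Making $\{g_0,g_2\}$ non-critical requires $g_1 \prec g_0$ and $g_1 \prec g_2$, since $g_1$ is its only extra lcm-divisor; similarly, making $\{g_1,g_3\}$ non-critical requires $g_2 \prec g_1$ and $g_2 \prec g_3$. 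These two conditions together force both $g_1 \prec g_2$ and $g_2 \prec g_1$, which is impossible. Hence every total order produces at least four critical $2$-subsets, and $(y_1,y_2)^3$ admits no minimal Lyubeznik resolution, completing the necessity.
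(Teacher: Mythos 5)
Your proof is correct, and while the overall skeleton (reduce necessity to star graphs via the forbidden induced subgraphs of Proposition~\ref{prop:non-Lyubeznik-powers} and Corollary~\ref{cor:induced-subgraph-Morse}, dispose of $n=2$ using $K_{1,3}$, then handle powers of $(x,y)$) coincides with the paper's, your treatment of the remaining case $n\ge 3$ is genuinely different. The paper argues uniformly in $n$: for any total order on $\mingens((x,y)^n)$ it exhibits a Lyubeznik-critical subset of cardinality $3$, so every Lyubeznik resolution has length at least $3 > 2 = \operatorname{pd}(S/(x,y)^n)$ and hence is non-minimal. You instead use the Restriction Lemma twice to collapse all of $(y_1,\dots,y_m)^n$, $m\ge 2$, $n\ge 3$, onto the single ideal $(y_1,y_2)^3$ (via the HHZ-subideals with respect to $y_1^ny_2^n$ and then $y_1^ny_2^3$, the latter giving $y_1^{n-3}(y_1,y_2)^3$ and invoking Proposition~\ref{prop:mI}), and then show by a rank count in homological degree $2$ that among the six $2$-subsets of the four generators, the three consecutive pairs are always critical and at least one of $\{g_0,g_2\}$, $\{g_1,g_3\}$ must also be critical, forcing $\rank \mathcal{F}_2 \ge 4 > 3 = \beta_2$. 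What each buys: the paper's argument needs no reduction and works for all $n$ in one stroke, but requires a case analysis on the two smallest generators (and its named witness $x^{a-1}y^{n-a+1}$ silently assumes $a-1\neq b$); yours localizes the entire difficulty to one transparent finite check on four monomials that is easily verified by hand, at the modest cost of the two HHZ reductions. Both arguments are sound, and your degree-$2$ obstruction is arguably the cleaner certificate of non-minimality here.
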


\begin{proof}
The ``if" implication is straightforward. We will prove the ``only if" direction.
    
    Assume that $G$ is a graph such that $I(G)^n$ is Lyubeznik. By Proposition \ref{prop:non-Lyubeznik-powers}, $G$ does not contain, as an induced subgraph, any $C_k$, where $k\geq 3$, or $P_k$, where $k\geq 4$. In other words, $G$ must be a star graph $K_{1,k}$ for some integer $k$. 
    
    If $n=2$, then $G$ does not contain $K_{1,3}$ as an induced subgraph by Proposition \ref{prop:non-Lyubeznik-powers} (1). Thus $k\leq 2$, and $G$ is either an edge or a path of length 2. 
    
    Consider the case that $n>2$. We will show that $I(K_{1,2})^n$ is not Lyubeznik. This would imply that $G$ does not contain $K_{1,2}$ as an induced subgraph and, hence, the desired statement would follow. To this end, we will exhibit that any Lyubeznik resolution of $S/I(K_{1,2})^n$ has length at least $3$ and, thus, is not minimal, due to the Auslander-Buchsbaum-Serre theorem. 
    
    Since the Lyubeznik resolutions of $I(K_{1,2})$ and its powers are the same as those of the ideal generated by two variables, it suffices to consider the ideal $(x,y)$. Fix any total order $(\succ)$ on $\mingens((x,y)^n)$ and let $m_1\prec m_2\prec m_3$ be the three smallest monomials with respect to $(\succ)$. We can set write
   	\begin{equation*} 
   	m_1 = x^ay^{n-a} \text{ and } m_2 = x^by^{n-b},
   	\end{equation*}
    for some integers $a$ and $b$. By symmetry, we can assume that $a>b$. 
    
    It suffices to construct a Lyubeznik-critical subset $\sigma$ of $\mingens((x,y)^n)$ of cardinality 3. Consider the sets
    \begin{equation*} 
    	\sigma_m \coloneqq \{m_1,m_2,m\} \text{ and } \tau_{m'} \coloneqq \{m_1,m_3,m'\},
    \end{equation*}
    where $m,m'\in \mingens((x,y)^n)$, $m\succ m_2$, and $m'\succ m_3$. Note that $m'$ always exists since $n>2$. By definition, $\sigma_m$ is Lyubeznik-critical if and only if $m_1\nmid \lcm(m_2,m)$, and $\tau_{m'}$ is Lyubeznik-critical if and only if $m_1\nmid \lcm(m_3,m')$ and $m_2\nmid \lcm(m_3,m')$. 
    
    The proof completes by showing that there always exists $m$ or $m'$ so that either $\sigma_m$ or $\tau_{m'}$ is critical. Indeed, if $a>1$, then $\sigma_m$ is Lyubeznik-critical for $m=x^{a-1}y^{n-a+1}$. On the other hand, if $a=1$, i.e., $m_1=xy^{n-1}$, then it follows that $m_2=y^n$. In this case, $\tau_{m'}$ is Lyubeznik-critical for any $m'$.
\end{proof}

\section{Powers of edge ideals that are bridge-friendly}

In this section, we will study graphs $G$ and integers $n$ such that $I(G)^n$ is bridge-friendly. As in the previous section, we begin with the case when $n = 1$.

The following result is immediate from Lemma \ref{lem:existence-m1-m2-m3} and Remark \ref{rem:unique-factor-m1-m2-m3}.

\begin{proposition}\label{prop:not-bridgefriendly-edgeideals}
    An edge ideal $I(G)$ is not bridge-friendly (with respect to $(\succ)$) if and only if  there exist a type-1 set $\tau \subseteq E(G)$ and monomials $m_1\succ m_2 \succ m_3$ in $E(G)$ satisfying the conditions in Lemma \ref{lem:existence-m1-m2-m3}. Moreover, if $m_3=yz$, then in $\tau \cup \{m_1,m_2\}$, $m_1$ and $m_3$ are the only edges containing $y$, and $m_2$ and $m_3$ are the only edges containing $z$, or vice versa. 
\end{proposition}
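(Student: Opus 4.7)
The plan is to handle the equivalence and the moreover clause separately. For the equivalence itself, Lemma~\ref{lem:existence-m1-m2-m3} already characterizes when an arbitrary monomial ideal fails to be bridge-friendly, and $I(G)$ is merely a special case of this; the listed conditions transfer verbatim with no new input required. The real content lies in the moreover clause, which pins down the combinatorial role of the variables $y$ and $z$ in the edge-ideal setting.

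For the moreover statement, I would invoke Remark~\ref{rem:unique-factor-m1-m2-m3}, which provides two ``unique shared factor'' relations: the pair $m_2, m_3$ shares a factor $x^n$ unique within $\tau \cup \{m_1\}$, and the pair $m_1, m_3$ shares a factor $x^{n'}$ unique within $\tau \cup \{m_2\}$. Because every generator of $I(G)$ is squarefree of degree two and $m_3 = yz$, any monomial $x^n$ with $n \ge 1$ dividing $m_3$ forces $n = 1$ and $x \in \{y, z\}$. So both shared factors are single variables drawn from $\{y, z\}$.

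The next step is to show these two shared variables must be distinct. If, say, both were $y$, then uniqueness within $\tau \cup \{m_1\}$ applied to the pair $(m_2, m_3)$ would give $y \nmid m_1$, while the sharing relation for $(m_1, m_3)$ would require $y \mid m_1$, a contradiction. The analogous argument rules out both being $z$. Hence, up to swapping $y$ and $z$, I may assume $m_1$ and $m_3$ share $y$ while $m_2$ and $m_3$ share $z$.

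From here the conclusion is purely bookkeeping. The $z$-uniqueness within $\tau \cup \{m_1\}$ says $z \nmid m''$ for every $m'' \in (\tau \cup \{m_1\})\setminus\{m_3\}$, which, once $m_2$ is adjoined, means that within $\tau \cup \{m_1, m_2\}$ the variable $z$ appears only in $m_2$ and $m_3$. Symmetrically, the $y$-uniqueness within $\tau \cup \{m_2\}$ shows that inside $\tau \cup \{m_1, m_2\}$ the variable $y$ appears only in $m_1$ and $m_3$. The ``or vice versa'' case is just the symmetric choice made at the beginning of the third paragraph. The only potential pitfall I foresee is carefully tracking which ambient set each ``unique within'' clause refers to when one of the two sharing monomials lies outside that set, but this does not require any new idea.
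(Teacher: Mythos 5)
Your proposal is correct and follows exactly the route the paper intends: the paper's own proof is the single sentence ``straightforward from Lemma~\ref{lem:existence-m1-m2-m3} and Remark~\ref{rem:unique-factor-m1-m2-m3},'' and you have simply filled in the details (squarefree degree-two generators force the shared factors to be single variables from $\{y,z\}$, and the two uniqueness clauses force them to be distinct). The bookkeeping in your final paragraph, including the care about which ambient set each ``unique within'' clause refers to, is exactly right.
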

\begin{proof}
    The proof is straightforward from Lemma \ref{lem:existence-m1-m2-m3} and Remark \ref{rem:unique-factor-m1-m2-m3}.
\end{proof}

Proposition \ref{prop:not-bridgefriendly-edgeideals} roughly says that it is quite restrictive for an edge ideal not to be bridge-friendly. It is known in \cite{CHM24} that edge ideals of trees are bridge-friendly. The next result addresses edge ideals of cycles.

\begin{proposition}\label{prop:non-bridgefriendly-cycles}
    The edge ideal of an $n$-cycle $I(C_n)$ is bridge-friendly if and only if $n\in \{3,5,6\}$.
\end{proposition}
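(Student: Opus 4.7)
The plan is to establish the two directions separately, exploiting the cyclic symmetry of $C_n$ throughout. For the ``only if'' direction (i.e., that $I(C_n)$ is not bridge-friendly for $n = 4$ or $n \geq 7$), I will invoke Proposition \ref{prop:not-bridgefriendly-edgeideals} to exhibit the forbidden configuration $(\tau, m_1, m_2, m_3)$ of Lemma \ref{lem:existence-m1-m2-m3} under every total order. Label the vertices of $C_n$ cyclically as $1, 2, \dots, n$. By the dihedral symmetry of $C_n$, we may assume that the smallest edge under any given total order $(\succ)$ is $m_3 \coloneqq x_1 x_2$. Take $\tau \coloneqq \{x_1 x_2, x_3 x_4\}$ when $n = 4$, and $\tau \coloneqq \{x_1 x_2, x_3 x_4, x_{n-1} x_n\}$ when $n \geq 7$, and set $\{m_1, m_2\} \coloneqq \{x_1 x_n, x_2 x_3\}$ with labels chosen so that $m_1 \succ m_2$.

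For $n \geq 7$, the key structural observation is that $\lcm(\tau)$ involves only the six variables $x_1, x_2, x_3, x_4, x_{n-1}, x_n$, so no ``middle edge'' $x_i x_{i+1}$ with $4 \leq i \leq n-2$ divides it. Consequently, the only gaps of $\tau$ are $m_1$ and $m_2$, and adding $m_i$ to $\tau$ creates only $m_i$ itself as a new bridge; then adding the remaining gap $m_j$ to $\tau \cup \{m_i\}$ forces $m_3 = x_1 x_2$ to become a new bridge, dominated by $m_j$. Combined with the fact that $\tau$ has no bridges, this verifies all conditions of Lemma \ref{lem:existence-m1-m2-m3}. The case $n = 4$ is handled analogously and more simply, using that the two ``diagonal'' edges $x_1 x_4$ and $x_2 x_3$ are the only gaps of $\tau = \{x_1 x_2, x_3 x_4\}$.

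For the ``if'' direction (i.e., that $I(C_n)$ is bridge-friendly for $n \in \{3, 5, 6\}$), I will exhibit specific total orders and verify bridge-friendliness directly. The case $n = 3$ is essentially trivial: among the $8$ subsets of $\mingens(I(C_3))$, only the full edge set has any bridges, and its ``smallest-bridge'' complement uniquely determines it, so it is automatically type-2. For $n = 5$ and $n = 6$, I choose orders that specifically defeat the $n \geq 7$ construction. For $n = 5$, any analogous choice of $\tau$ with three edges unavoidably contains two adjacent edges (say $x_3 x_4$ and $x_4 x_5$), so adding $m_2 = x_2 x_3$ creates an extra new bridge $x_3 x_4$ dominated by $m_2$; thus $m_2$ fails to be a true gap of $\tau$ and the hypothesis of Lemma \ref{lem:existence-m1-m2-m3} collapses. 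For $n = 6$, $\lcm(\tau)$ covers all six variables, producing an extra ``antipodal'' gap $x_4 x_5$; under a suitable order this gap is a true gap of $\tau \cup \{m_i\}$ that the bridge $m_i$ dominates, thereby violating the potentially-type-2 condition for both $i = 1$ and $i = 2$ simultaneously.

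The main obstacle will be the positive direction for $C_5$ and $C_6$: ruling out the principal obstruction above does not suffice, since bridge-friendliness requires that \emph{every} potentially-type-2 subset be type-2. I expect to address this by a finite case analysis, enumerating the few candidate subsets that could be potentially-type-2 under each chosen order and verifying the type-2 condition directly, exploiting cyclic symmetry to cut down the cases. By contrast, the negative direction is clean because a single construction works uniformly across all orders after a cyclic relabeling of the vertices.
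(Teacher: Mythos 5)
Your proposal follows essentially the same route as the paper: for $n \ge 7$ the paper uses exactly your configuration $\tau = \{x_1x_2,\, x_3x_4,\, x_{n-1}x_n\}$ with $\{m_1,m_2\} = \{x_2x_3,\, x_nx_1\}$ and $m_3 = x_1x_2$ the $\succ$-smallest edge, fed into Lemma \ref{lem:existence-m1-m2-m3}, after the same WLOG relabeling. The cases $n \le 6$ --- including the non-bridge-friendliness of $C_4$ (which you argue explicitly, a nice touch) and the positive verifications for $C_5$ and $C_6$ that you leave as a promised finite case analysis with unspecified total orders --- are dispatched in the paper by direct finite verification as well, so the only part of your plan still owed is the same finite check the paper also does not spell out.
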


\begin{proof}
    The statement can be verified for $n\leq 6$. Suppose that $n\geq 7$, and set
    \[
    I(C_n)=(x_1x_2,x_2x_3,\dots, x_{n-1}x_n,x_nx_1).
    \]
    Consider any total order $(\succ)$ on $\mingens(I(C_n))$. Without loss of generality, assume $x_1x_2$ is the smallest monomial with respect to $(\succ)$. Set
        $\tau = \{x_1x_2,x_3x_4,x_{n-1}x_{n}\},\ 
        m_1 = x_{2}x_3,\ 
        m_2 = x_{n}x_1,\text{ and }
        m_3 = x_1x_2.$ 
    One can verify that these elements satisfy the condition in Lemma \ref{lem:existence-m1-m2-m3}. Thus, $I(C_n)$ is not bridge-friendly.
\end{proof}

Proposition \ref{prop:non-bridgefriendly-cycles} suggests that, in examining bridge-friendly edge ideals, one should investigate graphs whose induced cycles are only $C_3, C_5$ or $C_6$. Unfortunately, understanding the class of such graphs poses a challenging problem. Our approach is to focus on special classes of graphs for which the cycle structures are better understood. Particularly, we shall consider \emph{chordal} graphs, i.e., graphs whose induced cycles are only triangles.

The following result gives a few additional ``forbidden structures'' for being bridge-friendly. We remark here that the property of being bridge-friendly is purely combinatorial without any reference to the base field, so this can be verified by any computer algebra system. We have made use of \texttt{Sage} \cite{sagemath} to do the exhaustive computations.

\begin{proposition}\label{prop:non-bridgefriendly-graphs}
    Let $G$ be one of the following graphs:\newline
    \begin{minipage}[t]{0.4\textwidth}
    \begin{enumerate}
        \item The complete graph $K_4$ \Kfoursymb.
        \item The gem graph \gemsymb.
    \end{enumerate}
    \end{minipage}%
    \begin{minipage}[t]{0.5\textwidth}
    \begin{enumerate} 
    \setcounter{enumi}{2}
        \item The kite graph \kitesymb.
        \item The net graph \sunletsymb{3}.
    \end{enumerate}
    \end{minipage}
    
    \noindent Then $I(G)$ is not bridge-friendly.
\end{proposition}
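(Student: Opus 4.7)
The plan is to verify the statement by exhaustive enumeration, analogous in spirit to the proof of Proposition \ref{prop:non-Lyubeznik-graphs}. The key observation is that bridge-friendliness is purely combinatorial: it depends only on the total order $(\succ)$ on $\mingens(I(G))$ (see Definition \ref{def:bridgefriendly}), not on the base field. Consequently, for each of the four graphs $G$, it suffices to exhibit, for every choice of total order $(\succ)$ on $\mingens(I(G))$, a potentially-type-2 subset of $\mingens(I(G))$ that is not type-2, or equivalently (by Proposition \ref{prop:not-bridgefriendly-edgeideals}) a type-1 set $\tau$ together with witnesses $m_1 \succ m_2 \succ m_3$ satisfying the conditions of Lemma \ref{lem:existence-m1-m2-m3}.

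The number of orderings to verify is finite and small: $K_4$, the kite, and the net each have $6$ edges (so $6! = 720$ orderings), while the gem has $7$ edges ($7! = 5040$ orderings). For each fixed total order $(\succ)$, the verification procedure enumerates the subsets $\sigma \subseteq \mingens(I(G))$, identifies those that are potentially-type-2 using the bridge/gap/true-gap definitions, and checks whether the minimality condition defining type-2 fails for at least one such $\sigma$. These ranges are easily handled by \texttt{Sage}, so the proof proceeds by explicit computer verification as stated in the paper.

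To give a more structured (but equivalent) argument, one could cut down the case analysis by symmetry: $\operatorname{Aut}(K_4) = S_4$ acts transitively on the edges of $K_4$, and similarly $\operatorname{Aut}(G)$ has sizable orbit structure for the kite, gem, and net. One can fix a representative smallest edge $e_1$ within each orbit and then further exploit stabilizers to reduce the number of essentially distinct orderings. Within each reduced orbit representative, one would construct $\tau, m_1, m_2, m_3$ tailored to the structural feature (triangle, pendant, or shared edge of two triangles) that produces the obstruction. For instance, in $K_4$ with vertices $\{a,b,c,d\}$ and smallest edge $ab$, a natural candidate is $\tau = \{ab, cd\}$ together with two opposite gaps such as $ac, bd$, after aligning the order of the remaining edges.

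The main obstacle to a fully hand-written proof is the combinatorial explosion of sub-cases, because while symmetry reduces the number of orderings, each representative still generates a nontrivial sub-case analysis of which gaps are true gaps and which bridges are smallest. This is why the paper opts for computational verification via \texttt{Sage}, exactly as was done in Proposition \ref{prop:non-Lyubeznik-graphs}; the key conceptual content already lies in Proposition \ref{prop:not-bridgefriendly-edgeideals}, which reduces the question to a finite combinatorial search.
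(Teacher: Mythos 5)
Your proposal is correct and takes essentially the same approach as the paper: the paper's proof is precisely an exhaustive \texttt{Sage} verification over all total orders on $\mingens(I(G))$, justified by the observation (stated just before Proposition \ref{prop:non-bridgefriendly-graphs}) that bridge-friendliness is purely combinatorial and field-independent. The additional remarks on symmetry reduction and on Proposition \ref{prop:not-bridgefriendly-edgeideals} are sensible but not part of the paper's argument for this statement.
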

\begin{proof}
    Verified with \texttt{Sage} computations.
\end{proof}

Propositions \ref{prop:non-bridgefriendly-cycles} and \ref{prop:non-bridgefriendly-graphs}, coupled with Corollary \ref{cor:induced-subgraph-Morse}, state that graphs that contain any forbidden structures are not bridge-friendly.
The following definition is crucial in classifying chordal graphs that avoid the forbidden structures described in Proposition \ref{prop:non-bridgefriendly-graphs}. 

\begin{definition} \label{def:BF}
    Let $T=(V(T),E(T))$ be a tree graph and let $\w: E(T)\to \mathbb{Z}_{\geq 0}$ be an edge-weight function on $T$. Let $\BF(T,\w)$ denote the graph with vertex set
    \[
    V(T) \sqcup \bigsqcup_{e \in E(T)} \{v_{e,1},v_{e,2},\dots,v_{e,
    \w(e)}\}
    \] 
    and edge set
    \[
    E(T) \sqcup \bigsqcup_{yz = e \in E(T)}  \{yv_{e,i} \}_{i=1}^{\w(e)} \cup \{ zv_{e,i} \}_{i = 1}^{\w(e)}.
    \]
\end{definition}

Roughly speaking, $\BF(T,\w)$ is obtained from $T$ by, for each edge $e \in E(T)$, attaching $\w(e)$ new triangles along the edge $e$. This class of graphs has a nice characterization as follows. 

\begin{proposition}\label{prop:BF-graph-characterization}
    Let $G$ be a graph. Then, $G = \BF(T,\w)$, for a tree graph $T$ and an edge-weight function $\w:E(T)\to \mathbb{Z}_{\geq 0}$, if and only if $G$ is chordal and any induced 3-cycle of $G$ has a vertex of degree 2.
\end{proposition}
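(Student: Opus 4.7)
The proof splits into two directions. The forward direction is a direct structural verification, while the reverse proceeds by induction on $\card{V(G)}$, with the main subtlety being that a $\BF$-decomposition of a graph need not be unique.

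For the forward direction, if $G = \BF(T, \w)$, then I classify triangles of $G$ by the types of their vertices. A triangle on three tree vertices is impossible since $T$ is acyclic, and a triangle involving two $v$-vertices is impossible since any two $v$-vertices are non-adjacent in $\BF(T, \w)$. So every triangle has the form $\{y, z, v_{e, i}\}$ with $e = yz \in E(T)$, and the peak $v_{e, i}$ has degree exactly $2$. For chordality, any cycle $C$ of length $\geq 4$ must pass through some $v$-vertex $v_{e, i}$ (as $T$ itself contains no cycle), and its only neighbors in $G$, namely the endpoints of $e$, are non-consecutive on $C$ but adjacent in $G$, yielding a chord.

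For the reverse direction, I induct on $\card{V(G)}$. If $G$ has no triangle, chordality forces $G$ to be a tree and I take $T = G$, $\w \equiv 0$. Otherwise, pick any triangle of $G$ and use the hypothesis to obtain a vertex $z$ of degree $2$ in it, with $N_G(z) = \{x, y\}$. Then $G' \coloneqq G \setminus \{z\}$ is connected (because $xy$ is an edge), chordal, and every induced triangle of $G'$ still contains a degree-$2$ vertex of $G'$: the degree-$2$ witness $u$ in $G$ cannot lie in $\{x, y\}$, since otherwise $N_G(u) \subseteq \{y, z\}$ or $\{x, z\}$ would give $\deg_{G'}(u) = 1$, contradicting $u$'s membership in a triangle of $G'$. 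By induction, $G' = \BF(T', \w')$.

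To reconstruct $G$, I must reattach $z$ via the edge $xy$ of $G'$. The easy case is when $xy$ is a tree edge of $T'$: simply incrementing $\w'(xy)$ by $1$ and identifying $z$ with the new peak $v_{xy, \w'(xy)+1}$ does the job. The main obstacle is when $xy$ is not a tree edge of $T'$, which forces exactly one of $x, y$ to be a $v$-vertex (since two $v$-vertices are non-adjacent); say $x = v_{e, j}$ with $e = cd \in E(T')$ and $y = c$. The triangle $\{x, c, d\}$ in $G$ must then still have a degree-$2$ vertex, and the computations $\deg_G(x) = 3$ and $\deg_G(c) \geq 3$ force this vertex to be $d$. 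From $\deg_{G'}(d) = \deg_G(d) = 2$ and $\{c, x\} \subseteq N_{G'}(d)$, I would then conclude that $d$ is a leaf of $T'$, that $\w'(e) = 1$, and that $x$ is the unique peak on $e$. This permits a \emph{rotation} of the decomposition of $G'$: let $T''$ be the tree obtained from $T'$ by removing the leaf $d$ and appending $x$ as a new leaf adjacent to $c$, and set $\w''(xc) = 1$ with $d$ playing the role of the unique peak on $xc$ (keeping $\w''$ equal to $\w'$ on all common edges). A direct edge-set comparison gives $\BF(T'', \w'') = G'$, and now $xy = xc$ is a tree edge of $T''$, reducing to the easy case. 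The delicate point of the whole argument is this rotation: it is the degree-$2$ hypothesis applied to the \emph{ambient} graph $G$ (rather than to $G'$) that pins down the local structure of $T'$ near $d$ and makes the rotation well-defined.
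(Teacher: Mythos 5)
Your proof is correct, but it takes a genuinely different route from the paper's. The paper's reverse direction is a single global construction: it simultaneously chooses one degree-$2$ apex $x_C$ in every induced triangle $C$, deletes all of them (and their incident edge pairs) at once, verifies that the remaining graph $T$ is a connected acyclic induced subgraph, and reads off $\w(e)$ as the number of triangles based at $e$. You instead induct on $\card{V(G)}$, peeling off one apex $z$ at a time, and the real work is your \emph{rotation} step: when the base edge $xy$ of the removed triangle fails to be a tree edge of the inductively obtained decomposition $\BF(T',\w')$, you use the degree-$2$ hypothesis in the ambient graph $G$ to pin down that the offending configuration is a pendant triangle, and then re-root that triangle so that $xy$ becomes a tree edge. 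This correctly confronts the non-uniqueness of $\BF$-decompositions, which the paper sidesteps entirely by fixing a global choice function up front. What the paper's approach buys is brevity and an explicit description of $T$ and $\w$ directly in terms of $G$; what yours buys is an inductive scheme (and an explicit illustration that the pair $(T,\w)$ is not canonical), at the cost of the case analysis around the rotation. All the delicate points in your argument check out: the degree-$2$ witness of a triangle surviving in $G'$ cannot be $x$ or $y$, exactly one of $x,y$ can be a peak when $xy\notin E(T')$, and the forced conclusions $\deg_G(x)=3$, $\deg_G(c)\geq 3$ correctly isolate $d$ as a leaf of $T'$ with $\w'(cd)=1$, which is precisely what makes the rotation an equality of edge sets.
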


\begin{proof}
    Assume that $G=\BF(T,\w)$ for some tree $T$ and function $\w\colon E(T)\to \mathbb{Z}_{\geq 0}$. It is clear that any of its induced 3-cycle has a vertex of degree $2$. It can also be seen that any induced cycle of $G$ is a 3-cycle attached to an edge of $T$. Thus, $G$ is chordal.
    
    We now proceed with the other implication. Let $G$ be a chordal graph such that any of its induced 3-cycle has a vertex of degree 2. Consider the function
    \begin{align*}
        \{\text{induced 3-cycles of } G\}&\to V(G) \times E(G)\\
        C&\mapsto (x_C,y_Cz_C), 
    \end{align*}
    where $x_C$ is a vertex of $C$ of degree 2, and $y_Cz_C$ the edge of $C$ opposite $x_C$. Observe that an induced 3-cycle $C$ of $G$ may have many vertices of degree 2, in which case, we pick $x_C$ to be a fixed one among them. Set 
    \begin{align*}
        V&\coloneqq V(G) \setminus \{x_C \colon C \text{ is an induced 3-cycle of }G\},\\
        E&\coloneqq E(G) \setminus \{x_Cy_C,\ x_Cz_C \colon C \text{ is an induced 3-cycle of }G\}.
    \end{align*}
    Let $T$ be the induced subgraph of $G$ with the vertex set $V$. Since each vertex of $G$ that is not in $T$ is of degree 2, deleting them means deleting the two edges containing it. In other words, $E(T)=E$. 
    
    We claim that for each induced 3-cycle $C$ of $G$, the edge $y_Cz_C$ is an edge of $T$. Indeed, suppose otherwise that $y_C=x_{C'}$ is  a vertex of degree 2 of a different induced 3-cycle $C'$. Then, $C'$ must contain the only two edges containing $y_C$, namely $x_Cy_C$ and $y_Cz_C$, and thus $C'=C$, a contradiction. Therefore, the claim holds and hence $T$ is connected. 
    
    Since any 3-cycle in $G$ becomes an edge in $T$, the latter has no cycle. Thus, $T$ is a tree. We can define the following edge-weight function
    \begin{align*}
        \w: E=E(T)&\to \mathbb{Z}_{\geq 0}\\
        e&\mapsto \card{\{C\colon C\text{ is an induced 3-cycle of } G \text{ such that }y_Cz_C=e\}}.
    \end{align*}
    Since the vertices that we delete from $G$ to obtain $T$ are all of degree 2, $T$ is obtained from $G$ by deleting all of these vertices and all pairs of edges containing each of them. This is a bijective process, i.e., we can obtain $G$ from $T$, provided that we know how many vertices needed to add in for each edge of $T$, which is recorded in the function $\w$. To sum up, $G = \BF(T,\w)$, as desired.
\end{proof}

We are now ready to classify chordal graphs that avoid forbidden structures described in Proposition \ref{prop:non-bridgefriendly-graphs}.

\begin{proposition}\label{prop:BF-characterization-forbidden}
    A chordal graph is $\BF(T,\w)$ for a tree $T$ and an edge-weight function $\w:E(T)\to \mathbb{Z}_{\geq 0}$ if and only if it does not contain, as an induced subgraph any 4-complete graph \Kfoursymb, gem graph \gemsymb, kite grapth \kitesymb, or net graph \sunletsymb{3}.
\end{proposition}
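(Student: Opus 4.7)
My plan is to reduce the statement to Proposition~\ref{prop:BF-graph-characterization}, which characterizes graphs of the form $\BF(T,\w)$ as precisely those chordal graphs in which every induced triangle has a vertex of degree~$2$. With this in hand, the task becomes: for a chordal graph $G$, show that $G$ avoids each of $K_4$, the gem, the kite, and the net as an induced subgraph if and only if every induced triangle of $G$ has a degree-$2$ vertex. The ``only if'' direction is easy: in each of the four forbidden graphs one can exhibit a triangle all of whose vertices have degree at least $3$ in that graph (for $K_4$ this is the whole graph; for the gem, the apex with any two consecutive interior path vertices; for the kite, the two chord-endpoints of its diamond together with a diamond-tip; for the net, the central triangle), so any induced copy in $G$ carries the same forbidden triangle over to $G$.

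For the converse I would argue by contradiction. Suppose $G$ is chordal, avoids the four forbidden induced subgraphs, and yet contains an induced triangle $\{x,y,z\}$ with $\deg_G(v)\ge 3$ for every $v\in\{x,y,z\}$. Set $A_u \coloneqq N_G(u)\setminus\{x,y,z\}$ for $u\in\{x,y,z\}$; each $A_u$ is non-empty by hypothesis. I would split the analysis according to whether two of the sets $A_x,A_y,A_z$ meet.

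\textbf{Case 1:} $A_x,A_y,A_z$ are pairwise disjoint. Pick $x'\in A_x$, $y'\in A_y$, $z'\in A_z$; these are pairwise distinct, and each is adjacent to exactly one vertex of the triangle. If no two of $x',y',z'$ are adjacent, then $\{x,y,z,x',y',z'\}$ induces the net; otherwise, if say $x'y'\in E(G)$, then $\{x,y,x',y'\}$ induces a chordless $4$-cycle, violating chordality.

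\textbf{Case 2:} Some pair meets, say $w\in A_x\cap A_y$. First, $wz\notin E(G)$, else $\{w,x,y,z\}$ induces $K_4$; hence $\{w,x,y,z\}$ induces a diamond. Now pick $z'\in A_z$ (necessarily $z'\ne w$) and split on the adjacencies between $z'$ and $\{w,x,y\}$: if $z'$ is adjacent to none, then $\{w,x,y,z,z'\}$ is the kite; if $z'$ is adjacent to both $x$ and $y$, then $\{x,y,z,z'\}$ is $K_4$; if $z'$ is adjacent to $w$ but not to both of $x,y$, then $\{w,y,z,z'\}$ (or the symmetric set $\{w,x,z,z'\}$) induces a chordless $4$-cycle; and in the last case, where $z'$ is adjacent to exactly one of $x,y$ (say $x$) and not to $w$, the subgraph induced on $\{w,x,y,z,z'\}$ has edges exactly $wx,wy,xy,xz,yz,zz',xz'$, and this is the gem with apex $x$ and base path $w,y,z,z'$. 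The step I expect to require the most care is this last subcase, where one must verify that the seven listed edges are precisely the edges of the induced subgraph --- i.e., that none of the three ruled-out non-edges has secretly become an edge --- and then identify the result with the gem. Once this is done, every subcase ends in a contradiction, completing the proof.
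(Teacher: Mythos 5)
Your proposal is correct and follows essentially the same route as the paper: both reduce the statement to the characterization of $\BF(T,\w)$ in Proposition~\ref{prop:BF-graph-characterization} and then run a case analysis on the outside neighbors of a hypothetical induced triangle all of whose vertices have degree at least $3$, producing an induced $K_4$, gem, kite, net, or chordless $C_4$ in each branch. Your subcases are grouped slightly differently (the paper splits on common neighbors of all three, of exactly two, or of no two triangle vertices), but the forbidden configurations extracted are the same and your analysis, including the delicate gem subcase, checks out.
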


\begin{proof} 
    It is clear that the graph $\BF(T,\w)$ does not contain the list of forbidden graphs, as an induced subgraph. We will only show the other implication. 
    
    Let $G$ be a chordal graph that does not contain any of the listed forbidden induced subgraphs. If $G$ is a tree or $C_3$, then we are done. By Proposition \ref{prop:BF-graph-characterization}, it suffices to show that any induced triangle $C_3$ in $G$ has a vertex of degree 2. Suppose otherwise that $G$ contains a $C_3$, with vertices $x,y,$ and $z$, such that these vertices are of degrees at least 3. Equivalently, $x,y,$ and $z$ are each connected to at least a vertex other than the remaining two vertices. We consider the following possibilities on the additional neighbors of $x,y$ and $z$.
    \begin{itemize}
        \item The vertices $x,y$ and $z$ have a common neighbor outside of the triangle $xyz$. In this case, $G$ contains a 4-complete graph \Kfoursymb, a contradiction. 
        \item The vertices $x,y$ and $z$ have no common neighbor outside of $xyz$, but two of the vertices do. Without loss of generality, assume that for some vertex $w \in V(G)$, $wx, wy \in E(G)$, but $wz \not\in E(G)$. Since $z$ is of degree at least $3$, there exists a neighbor $z'$ of $z$ other than $x$ and $y$. Clearly $z'$ cannot be connected to both $x$ and $y$. Assume that $z'x \not\in E(G)$.
        
        If $z'w \in E(G)$, then $G$ contains an induced 4-cycle on the vertices $z'zxw$, and so $G$ is not chordal. If $z'w \not\in E(G)$ and $z'y \in E(G)$, then $G$ contain an induced gem graph \gemsymb. If $z'x, z'y, z'w \not\in E(G)$, then $G$ contains an induced kite graph \kitesymb. The last two cases both lead to a contradiction.
        \item No two vertices among $\{x,y,z\}$ have a common neighbor outside of the triangle $xyz$. Since the degree of these vertices are at least 3, we may assume that there are additional edges $xx', yy', zz'$, with distinct vertices $x',y',z'$. If there is an edge between $x', y'$ and $z'$, say $x'y' \in E(G)$, then $G$ contains an induced $C_4$ over the vertices $xyy'x'$, and so $G$ is not chordal. If there is no edge between $x', y'$ and $z'$, then $G$ contains an induced net graph \sunletsymb{3}, a contradiction.
        \qedhere
    \end{itemize}
\end{proof}

Before continuing to prove that $\text{BF}(T,\w)$ are bridge-friendly, we shall define a particular total order ($\succ$) on $E(T)$. For a fixed vertex $x_0$ in $T$, we shall view $T$ as a \emph{rooted} tree with root $x_0$. Each vertex $v \in V(T)$ determines a unique path from $v$ to $x_0$. For $i \in \NN$, let 
\begin{equation*} 
	V_i \coloneqq \left\{v \in V(T) ~\middle|~ \dist_T(v,x_0) =i\right\}
\end{equation*} 
be the set of vertices in $T$ whose distance to $x_0$ is $i$. Obviously, $V(T)=\bigcup_{i\in \mathbb{Z}_{\geq 0}} V_i$. Let $c_i = \card{V_i}$, for $i \in \ZZ_{\ge 0}$. We shall consider a  specific labeling for the vertices in $T$ given by writing 
\begin{equation*} 
	V_i = \left\{x_{i,j} ~\middle|~ 1 \le j \le c_i\right\}
\end{equation*}
(with the convention that $x_{0,1} = x_0$.) With respect to this particular labeling of the vertices in $T$, define the following total order $(\succ)$ on $E(T)$:
    \[
    x_{i,j}x_{i+1,k}\succ x_{i',j'}x_{i'+1,k'} \text{ if } \begin{sqcases}
        i<i'; \text{ or}\\
        i=i' \text{ and } j<j';\text{ or} \\
        i=i', j=j' \text{ and } k<k'. 
    \end{sqcases}
    \]
\begin{proposition}\label{prop:bridefriendly-graphs}
    If $G = \BF(T,\w)$, for a tree $T$ and an edge-weight function $\w:E(T)\to \mathbb{Z}_{\geq 0}$, then the edge ideal $I(G)$ is bridge-friendly.
\end{proposition}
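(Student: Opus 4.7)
The plan is a proof by contradiction. Suppose $I(G)$ fails to be bridge-friendly under the order $(\succ)$ (extended from $E(T)$ to all of $E(G)$ by placing each non-tree edge just below the tree-edge to which its triangle is attached, say in decreasing order of the attached-vertex index $\ell$). By Proposition \ref{prop:not-bridgefriendly-edgeideals} together with Lemma \ref{lem:existence-m1-m2-m3}, this failure yields a type-1 set $\tau\subseteq E(G)$ and edges $m_1\succ m_2\succ m_3=yz$ with $m_1=yu$ and $m_2=zv$ for some $u,v\notin\{y,z\}$, such that $y$ appears only in $\{m_1,m_3\}$ and $z$ appears only in $\{m_2,m_3\}$ inside $\tau\cup\{m_1,m_2\}$. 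In particular, neither $y$ nor $z$ occurs in any edge of $\tau$, and moreover $m_1$ and $m_2$ are true gaps of $\tau$ that do not dominate any bridge of $\tau$.

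The argument then splits on the location of $m_3=yz$ in $G=\BF(T,\w)$: either case (I), in which $yz\in E(T)$, or case (II), in which $yz$ is a triangle-side of the form $yv_{e,\ell}$ attached to a tree-edge $e\in E(T)$ incident to $y$ (the symmetric situation is analogous). In case (I), write $y=x_{i,j}$ and $z=x_{i+1,k}$. Since the layered order places each $x_{i+1,k}x_{i+2,\ast}$ strictly below $yz$, the edge $m_2=zv$ cannot be a tree-edge through a child of $z$, and the uniqueness of $z$ rules out $m_2=yz$; hence $m_2$ must be a triangle-side attached to some tree-edge incident to $z$. Examining the other side of that same triangle in $\BF(T,\w)$ then forces either an edge of $\tau$ incident to $z$ (violating the uniqueness condition) or a bridge of $\tau\cup\{m_1,m_2\}$ that is dominated by $m_2$, contradicting the true-gap hypothesis on $m_2$. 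In case (II), the third vertex of the triangle containing $y$ and $v_{e,\ell}$ must be the other endpoint of $e$, both of whose triangle-sides are edges of $G$; combining this with the prohibition of $y,z$ in $\tau$ produces a parallel contradiction.

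The main obstacle will be the parallel subcase analysis for $m_1=yu$, where $u$ can be the parent of $y$ in $T$, a child of $y$, or an attached triangle-vertex through a tree-edge incident to $y$. For each of these subcases, the plan is to produce either a hidden edge of $\tau$ incident to $y$ or $z$ (violating one of the uniqueness conditions on $y,z$) or a hidden bridge of $\tau$ dominated by $m_1$ (violating the true-gap condition on $m_1$), by exploiting that the local neighborhoods in $\BF(T,\w)$ consist only of $T$-edges and the two sides of each attached triangle. Running the argument symmetrically in $y\leftrightarrow z$ whenever needed, and using that the level of $y$ in the rooted tree is at most that of $z$ (up to relabeling), every subcase closes. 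Since no admissible triple $(\tau,m_1,m_2)$ survives, $I(G)$ is bridge-friendly, as desired.
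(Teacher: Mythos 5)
Your proposal has the same skeleton as the paper's argument (invoke Proposition \ref{prop:not-bridgefriendly-edgeideals} to obtain $\tau$ and $m_1\succ m_2\succ m_3=yz$, then run a case analysis on where $m_3$ sits inside $\BF(T,\w)$), but it breaks down at two concrete points. First, the order you choose does not work. You place the triangle edges of a tree edge $e$ \emph{just below} $e$, i.e.\ $e$ dominates them; the paper places them just \emph{above} $e$, so that $e$ is smaller than every triangle edge attached to it, and this is essential. Take the diamond $G=\BF(T,\w)$ with $T$ a single edge $xy$ and $\w(xy)=2$: your prescription gives an order such as $xy\succ xv_1\succ yv_1\succ xv_2\succ yv_2$, and this is \emph{not} bridge-friendly. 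Indeed, set $\tau=\{xv_1,yv_2\}$, $m_1=yv_1$, $m_2=xv_2$. Then $\tau$ has no bridges, each of $m_1,m_2$ is a true gap of $\tau$ (each $\tau\cup\{m_i\}$ has $m_i$ as its unique bridge, so there are no new bridges dominated by $m_i$), and one checks that $\tau\cup\{m_1\}$ and $\tau\cup\{m_2\}$ are both potentially-type-2; Lemma \ref{lem:existence-m1-m2-m3} then shows $I(G)$ is not bridge-friendly for this order. The same failure occurs for the other natural grouping $xy\succ xv_1\succ xv_2\succ yv_1\succ yv_2$. So before any case analysis can begin, the order must be replaced by the paper's choice $e'\succ x_{i,j}v_{e,1}\succ\cdots\succ x_{i+1,k}v_{e,\w(e)}\succ e$, under which every tree edge is \emph{smaller} than all of its attached triangle edges.

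Second, the contradiction you invoke in case (I) --- ``a bridge of $\tau\cup\{m_1,m_2\}$ that is dominated by $m_2$, contradicting the true-gap hypothesis on $m_2$'' --- is not a contradiction: Lemma \ref{lem:existence-m1-m2-m3} \emph{guarantees} that $\tau\cup\{m_1,m_2\}$ has a bridge $m_3\prec m_2$. Being a true gap of $\tau$ only forbids new bridges of $\tau\cup\{m_2\}$ dominated by $m_2$, not bridges of $\tau\cup\{m_1,m_2\}$. In the paper's hardest case ($m_3=x_{i+1,k}v_{e,l}$, forcing $m_1=x_{i,j}v_{e,l}$, $m_2=x_{i+1,k}v_{e,l'}$ and $x_{i,j}v_{e,l'}\in\tau$), the contradiction comes from a different mechanism: Corollary \ref{cor:true-gap} shows that the tree edge $e$ itself --- which, in the paper's order, lies \emph{below} all of its triangle edges --- is a true gap of $\tau$ strictly smaller than $m_2$, contradicting the choice of $m_2$ as the \emph{smallest} true gap of $\tau$ permitted by Lemma \ref{lem:existence-m1-m2-m3}. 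Your plan never uses this minimality, and the remaining subcases for $m_1$ are only asserted to ``close,'' so even after correcting the order the argument is incomplete.
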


\begin{proof} As before, fix a vertex $x_0$ of $T$ and view $T$ as a tree rooted at $x_0$. Let $V_i$, the labeling $x_{i,j}$'s for the vertices of $T$, and the total order ($\succ$) be defined as above. 

We shall first extend the total order $(\succ)$ to $E(G)$. By induction, it suffices to extend the total order ($\succ$) to $E(H)$, where $H$ is obtained by attaching $l$ triangles along an edge $e \in E(T)$. Suppose that $e = \{x_{i,j}, x_{i+1, k}\}$ and the $l$ new vertices in $H$ are $v_{e,1},\dots, v_{e,l}$. Let $e' \succ e$ be the edge immediately before $e$ in the total order ($\succ$) of $E(T)$. The total order on $E(H)$ is now given by setting:
    \[
    e' \succ x_{i,j}v_{e,1}\succ \dots \succ x_{i,j}v_{e,l} \succ x_{i+1,k}v_{e,1} \succ \dots \succ x_{i+1,k}v_{e,l} \succ e.
    \]

We will show that $I(G)$ is bridge-friendly with respect to $(\succ)$. Suppose otherwise that $I(G)$ is not bridge-friendly. By Proposition \ref{prop:not-bridgefriendly-edgeideals}, there exists a collection of edges $\tau \subseteq E(G)$ and edges $m_1\succ m_2 \succ m_3$ in $E(G)$ such that if we set $m_3=yz$, then no other edge in $\tau$ contains $y$ or $z$, and $m_1,m_2$ are gaps of $\tau$. We shall arrive at a contradiction. Consider the following possibilities:
    \begin{itemize}
        \item $m_3$ is an edge of $T$, i.e., $m_3=x_{i,j}x_{i+1,k}$ for some integers $i,j,$ and $k$. The only edges that contain $x_{i+1,k}$, and are larger than $m_3$, are of the form $x_{i+1,k} v_{m_3,l} $ for some $1 \le l \le \w(m_3)$. So, either $m_1$ or $m_2$ must be of this form. Since they are both gaps of $\tau$, and there are exactly two edges that contain $v_{m_3,l}$, we must have $x_{i,j}v_{m_3,l}\in \tau$, a contradiction.
        \item $m_3=x_{i,j}v_{e,l}$ for some edge $e=x_{i,j}x_{i+1,k} \in E(T)$ and integer $l$. Then, the only edge that contains $v_{e,l}$, other than $m_3$ itself, is $x_{i+1,k}v_{e,l}$, which is smaller than $m_3$. Thus, such $m_1$ and $m_2$ do not exist, a contradiction.
        \item $m_3=x_{i+1,k}v_{e,l}$ for some edge $e=x_{i,j}x_{i+1,k} \in E(T)$ and integer $l$. Then, the only edge that contains $v_{e,l}$, other than $m_3$ itself, is $x_{i,j}v_{e,l}$, and the only edges that contain $x_{i+1, k}$, and are bigger than $m_3$ itself, must be of the form $x_{i+1,k}v_{e,l'}$ for some integer $l'$. We can thus assume that $m_1=x_{i,j}v_{e,l}$ and $m_2=x_{i+1,k}v_{e,l'}$. Since $m_2$ is a gap of $\tau$, it follows that $x_{i,j}v_{e,l'}\in \tau$. Observe that $x_{i,j}v_{e,l'}$ and $m_3$ are both in $\tau$ and bigger than $e$, and 
        \[
        e=x_{i,j}x_{i+1,k} \mid \lcm(x_{i,j}v_{e,l'}, m_3).
        \]
        Hence, by Corollary \ref{cor:true-gap}, $e$ is a true gap of $\tau$. This contradicts the fact that $m_2$ can be chosen to be the smallest true gap of $\tau$. \qedhere 
    \end{itemize}
\end{proof}

Combining Propositions \ref{prop:non-bridgefriendly-graphs}, \ref{prop:BF-characterization-forbidden}, and \ref{prop:bridefriendly-graphs}, we obtain our next main result.

\begin{theorem}\label{thm:bridge-friendly-chordal}
    Let $G$ be a chordal graph. The edge ideal $I(G)$ is bridge-friendly if and only if $G$ is  $\BF(T,\w)$ for some tree $T$ and edge-weight function $\w:E(T)\to \mathbb{Z}_{\geq 0}$.
\end{theorem}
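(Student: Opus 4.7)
The plan is to deduce the theorem by combining the three propositions immediately preceding the statement, which together give both implications.

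For the $(\Leftarrow)$ direction, if $G = \BF(T,\w)$ for some tree $T$ and edge-weight function $\w$, then Proposition \ref{prop:bridefriendly-graphs} directly gives that $I(G)$ is bridge-friendly, with the explicit total order on $E(G)$ constructed from a root $x_0 \in V(T)$, a BFS-style labeling $x_{i,j}$ of the vertices of $T$, and the interleaving of triangle-edges $x_{i,j}v_{e,l}$ and $x_{i+1,k}v_{e,l}$ around each $T$-edge $e$. No extra work is needed here.

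For the $(\Rightarrow)$ direction, suppose $G$ is chordal and $I(G)$ is bridge-friendly. Apply the Restriction Lemma for bridge-friendly ideals (Corollary \ref{cor:induced-subgraph-Morse}) to conclude that $I(H)$ is bridge-friendly for every induced subgraph $H$ of $G$. By Proposition \ref{prop:non-bridgefriendly-graphs}, none of $K_4$, the gem, the kite, or the net can appear as an induced subgraph of $G$. Then Proposition \ref{prop:BF-characterization-forbidden}, applied to the chordal graph $G$, yields that $G = \BF(T,\w)$ for some tree $T$ and some edge-weight function $\w : E(T) \to \mathbb{Z}_{\geq 0}$. This closes the argument.

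The conceptually interesting content has already been isolated into the three propositions, so the theorem itself is essentially a synthesis. The only step that requires any care is making sure the chordality hypothesis is genuinely used, which happens in the invocation of Proposition \ref{prop:BF-characterization-forbidden}: without chordality one would also need to rule out longer induced cycles (e.g.\ via Proposition \ref{prop:non-bridgefriendly-cycles}) before concluding the structural classification, but under our hypothesis this is automatic. Thus the main "obstacle" is entirely combinatorial and is absorbed into Proposition \ref{prop:BF-characterization-forbidden}, while the ideal-theoretic content is entirely absorbed into Propositions \ref{prop:non-bridgefriendly-graphs} and \ref{prop:bridefriendly-graphs}.
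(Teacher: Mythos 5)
Your proposal is correct and matches the paper's own proof, which is exactly the synthesis of Propositions \ref{prop:non-bridgefriendly-graphs}, \ref{prop:BF-characterization-forbidden}, and \ref{prop:bridefriendly-graphs} (with Corollary \ref{cor:induced-subgraph-Morse} supplying the passage to induced subgraphs). The only cosmetic slip is calling Corollary \ref{cor:induced-subgraph-Morse} the ``Restriction Lemma''---that name belongs to Lemma \ref{lem:HHZ-bridge-friendly}, of which the corollary is the relevant consequence.
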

 
We end the discussion of bridge-friendly edge ideals with an example of a graph $G$ that does not contain any of the forbidden graphs in Proposition \ref{prop:non-bridgefriendly-graphs}, and yet $I(G)$ is bridge-friendly.

\begin{example}
    Let $G$ be the join of two 6-cycles at three consecutive edges. Note that $G$ is not chordal.

    \begin{center}
    	\joinsixcyclessymb
    \end{center}

    One can verify that $I(G)$ is bridge-friendly with respect to the total order
    \[
    vy \succ ux \succ xz \succ tw \succ yz \succ wz \succ st \succ sv \succ su.
    \]
\end{example}

\begin{remark}
    As in Remark \ref{rm:disconnected}, it is worth noting that Theorem \ref{thm:bridge-friendly-chordal} can be extended to work over disconnected graphs.
\end{remark}

For the remaining of this section, we focus on higher powers of edge ideals of graphs. Similar to what happened to the Lyubeznik resolutions, edge ideals whose some higher power is bridge-friendly form a much smaller class.

\begin{proposition}\label{prop:non-bridgefriendly-powers}
    Let $G$ be one of the following graphs: \newline
    \begin{minipage}[t]{0.4\textwidth}
    \begin{enumerate}
        \item The 4-star graph $K_{1,3}$.
        \item The 4-path graph $P_4$;
        \item The 4-cycle graph $C_4$.
    \end{enumerate}
    \end{minipage}%
    \begin{minipage}[t]{0.5\textwidth}
    \begin{enumerate} 
    \setcounter{enumi}{3}
        \item The paw graph \pawsymb.
        \item The diamond graph \diamondsymb.
        \item The complete graph $K_4$ \Kfoursymb.
    \end{enumerate}
    \end{minipage}
    
    \noindent Then, the ideals $I(G)^2$ and $I(G)^3$ are not bridge-friendly. Moreover, if $G$ is among the first three graphs, then $I(G)^n$ is not bridge-friendly for any $n\geq 2$.
\end{proposition}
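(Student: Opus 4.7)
The proposition contains two separate assertions: (A) non-bridge-friendliness of $I(G)^2$ and $I(G)^3$ for each of the six listed graphs, and (B) non-bridge-friendliness of $I(G)^n$ for every $n\ge 2$ when $G\in\{K_{1,3},P_4,C_4\}$.

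For (A), since bridge-friendliness depends only on the combinatorics of $\mingens$ together with a total order on it, and each $I(G)^n$ in the list has very few minimal generators (at most $\binom{|E(G)|+n-1}{n}\le 15$), this reduces to a finite check: for each of the twelve pairs $(G,n)$, one exhaustively searches over all total orderings and exhibits, in each order, a potentially-type-2 set that fails to be type-2. I would use the same style of \texttt{Sage} verification that established Proposition \ref{prop:non-bridgefriendly-graphs}.

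For (B) with $G\in\{P_4,C_4\}$, I would induct on $n$. The base $n=2$ is covered by (A). For the inductive step, Proposition \ref{prop:HHZ-subideal-edgeideal} supplies a generator $f\in\mingens(I(G))$ for which $fI(G)^n$ is an HHZ-subideal of $I(G)^{n+1}$. If $I(G)^{n+1}$ were bridge-friendly, then Lemma \ref{lem:HHZ-bridge-friendly} would transfer the property to $fI(G)^n$, and Proposition \ref{prop:mI} would then force $I(G)^n$ to be bridge-friendly, contradicting the inductive hypothesis.

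For (B) with $G=K_{1,3}$, the inductive route above is blocked by the remark after Proposition \ref{prop:HHZ-subideal-edgeideal}, which shows $fI(K_{1,3})^n$ is never an HHZ-subideal of $I(K_{1,3})^{n+1}$. Instead, I would factor $I(K_{1,3})=x\cdot\mathfrak{m}$ with $\mathfrak{m}=(y_1,y_2,y_3)$ and invoke Proposition \ref{prop:mI} to reduce the question to showing that $\mathfrak{m}^n$ is not bridge-friendly for every $n\ge 2$. The plan is then to give a uniform construction: fix an arbitrary total order $(\succ)$ on $\mingens(\mathfrak{m}^n)$, use the $S_3$-symmetry on the variables to normalize the relative order of the ``corner'' generators $y_1^n,y_2^n,y_3^n$, and then exhibit a type-1 set $\tau$ together with true gaps $m_1\succ m_2$ of $\tau$ satisfying the criteria of Lemma \ref{lem:existence-m1-m2-m3}. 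A natural candidate is to draw $\tau\cup\{m_1,m_2\}$ from monomials supported on two fixed variables, letting a third mixed generator play the role of $m_3$ and produce the shared factor unique within $\tau\cup\{m_1\}$ (cf.\ Remark \ref{rem:unique-factor-m1-m2-m3}). The principal difficulty lies precisely here: unlike in the $P_4$ and $C_4$ cases, one cannot bootstrap from the $n=2,3$ data of (A), so this violating configuration must be written down explicitly in the parameter $n$ with enough flexibility to handle every total ordering of a cubically growing set of generators.
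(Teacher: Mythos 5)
Your induction for $P_4$ and $C_4$ (base case $n=2$, then Proposition~\ref{prop:HHZ-subideal-edgeideal} plus Lemma~\ref{lem:HHZ-bridge-friendly} and Proposition~\ref{prop:mI}) is exactly the paper's argument. But there are two genuine gaps elsewhere. First, your part (A) rests on the claim that each $I(G)^n$ has at most $15$ minimal generators, which is false: already $\binom{|E(G)|+n-1}{n}=\binom{8}{3}=56$ for $K_4$ with $n=3$, and the actual counts are, e.g., $19$ generators for $I(K_4)^2$ and $44$ for $I(K_4)^3$, with $I(C_4)^3$, $I(\text{paw})^3$ and $I(\text{diamond})^{2},I(\text{diamond})^3$ also well past the range where exhausting all total orders is feasible ($19!\approx 10^{17}$). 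The paper never brute-forces these full ideals; it first passes to carefully chosen HHZ-subideals with only $6$--$8$ generators (e.g.\ $(I(C_4)^2)^{\le (x_1x_2x_3)^2x_4}$, $(I(\text{diamond})^2)^{\le (x_2x_4)^2x_1x_3}$), checks those, and then handles the cubes by observing that the analogous restriction of $I(G)^3$ equals a monomial times the already-checked subideal (and $I(K_4)$ restricts to the same subideal as the diamond). Without some such reduction your finite check does not terminate in practice.

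Second, the hardest claim --- $I(K_{1,3})^n\cong(x_1,x_2,x_3)^n$ not bridge-friendly for all $n\ge 4$ --- is left as an unexecuted plan. You correctly note that the inductive route is blocked, but ``exhibit a violating configuration uniformly in $n$ for every total order'' is precisely the difficult step, and normalizing only the corner generators $y_i^n$ by symmetry leaves an unmanageable family of orders on a set of $\binom{n+2}{2}$ generators. The paper sidesteps this entirely with a stabilization trick: the HHZ-subideal $\bigl((x_1,x_2,x_3)^n\bigr)^{\le x_1^nx_2^2x_3^2}$ has exactly nine generators, and after factoring out $x_1^{n-4}$ (Proposition~\ref{prop:mI}) it is the \emph{same} ideal $(x_1^4,\,x_1^3x_2,\,x_1^3x_3,\,x_1^2x_2^2,\,x_1^2x_2x_3,\,x_1^2x_3^2,\,x_1x_2^2x_3,\,x_1x_2x_3^2,\,x_2^2x_3^2)$ for every $n\ge 4$; a single finite computation on this one ideal, combined with Lemma~\ref{lem:HHZ-bridge-friendly}, settles all $n\ge 4$ at once. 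You would need either this observation or a genuinely new uniform argument to close the case.
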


\begin{proof}
(1) If $G$ is a 4-star graph $K_{1,3}$, then $I(G)$ behaves in the same way as the ideal generated by three variables. Hence, we can focus on the ideal $J =(x_1,x_2,x_3)$ instead of $I(G)$ and derive at the same conclusions. 

One can verify that $J^2$ and $J^3$ are not bridge-friendly, using \texttt{Sage} computations. Note that for $J^3$, there are only $7!$ total orders that need checking, since the positions of $x_1^3,x_2^3,$ and $x_3^3$ do not matter.
   Consider $n\geq 4$. Compute the HHZ-subideal of $(J^n)^{\leq x_1^nx_2^2x_3^2}$, we get:
    \begin{equation*} 
    	(x_1^n, \ x_1^{n-1}x_2,\ x_1^{n-1}x_3,\ x_1^{n-2}x_2^2,\ x_1^{n-2}x_2x_3,\ x_1^{n-2}x_3^2,\ x_1^{n-3}x_2^2x_3,\ x_1^{n-3}x_2x_3^2,\ x_1^{n-4}x_2^2x_3^2).
    \end{equation*} 
    By factoring out necessary powers of $x_1$, using Proposition \ref{prop:mI}, this ideal behaves the same way as
    \begin{equation*} 
    	(x_1^4, \ x_1^3x_2,\ x_1^3x_3,\ x_1^2x_2^2,\ x_1^2x_2x_3,\ x_1^2x_3^2,\ x_1x_2^2x_3,\ x_1x_2x_3^2,\ x_2^2x_3^2).
    \end{equation*}
    One can verify that this last ideal is not bridge-friendly, and thus neither is $J^n$ nor $I(G)^n$, by Lemma \ref{lem:HHZ-bridge-friendly}. 
    
\noindent (2) If $G$ is the $4$-path graph $P_4$, then one can check that $I(G)^2$ is not bridge-friendly with \texttt{Sage} by exhausting the $6!$ possible total orders. Hence $I(G)^n$ is not bridge-friendly for any $n\geq 2$, by Proposition \ref{prop:HHZ-subideal-edgeideal} and Lemma \ref{lem:HHZ-bridge-friendly}.

\noindent (3) If $G$ is the $4$-cycle graph $C_4$, i.e., $I(G)=(x_1x_2,x_2x_3,x_3x_4,x_1x_4)$, then we can compute the following HHZ-subideal of $I(G)^2$:
        \[
        (I(G)^2)^{\leq (x_1x_2x_3)^2x_4} = (x_1^2x_2^2,\ x_2^2x_3^2,\ x_1^2x_2x_4,\ x_1x_2^2x_3,\ x_2x_3^2x_4,\ x_1x_2x_3x_4).
        \]
Once again, this ideal can be verified to be not bridge-friendly, by checking all $6!$ total orders. Therefore, $I(G)^n$ is not bridge-friendly, for all $n \ge 2$, by Proposition \ref{prop:HHZ-subideal-edgeideal} and Lemma \ref{lem:HHZ-bridge-friendly}.
 
\noindent (4) If $G$ is the paw graph \pawsymb, i.e., $I(G)=(x_1x_2,x_2x_3,x_1x_3,x_3x_4)$, then we can compute the following HHZ-subideal of $I(G)^2$:
        \[
        (I(G)^2)^{\leq (x_1x_3x_4)^2x_2}=(x_1^2x_3^2,\ x_3^2x_4^2,\ x_1^2x_2x_3,\ x_1x_2x_3^2,\ x_1x_3^2x_4,\ x_2x_3^2x_4,\ x_1x_2x_3x_4).
        \]
        One can verify that this ideal is not bridge-friendly by checking all $7!$ total orders. Thus $I(G)^2$ is not bridge-friendly by Lemma \ref{lem:HHZ-bridge-friendly}. Moreover, one can check that
        \[
        (I(G)^3)^{\leq (x_1x_2x_4)^3x_3^2}=x_1x_2I(G)^2,
        \]
        which is the same as $ I(G)^2$. Thus $I(G)^3$ is not bridge-friendly.
        
\noindent (5) If $G$ is the diamond graph \diamondsymb, i.e., $I(G)=(x_1x_2,x_2x_3,x_3x_4,x_1x_4,x_2x_4)$, then we can compute the following HHZ-subideals:
        \begin{align*}
            (I(G)^2)^{\leq (x_2x_4)^2x_1x_3} &= (x_2^2x_4^2,\ x_1x_2^2x_3,\ x_1x_2^2x_4,\ x_1x_2x_4^2,\ x_1x_3x_4^2,\ x_2^2x_3x_4,\ x_2x_3x_4^2,\ x_1x_2x_3x_4),\\
            (I(G)^3)^{\leq (x_2x_4)^3x_1x_3}&=x_2x_4(I(G)^2)^{\leq (x_2x_4)^2x_1x_3}. 
        \end{align*}
        One can verify that $(I(G)^2)^{\leq (x_2x_4)^2x_1x_3}$ is not bridge-friendly by checking all $8!$ total orders. Thus both $I(G)^2$ and $I(G)^3$ are not bridge-friendly by Lemma \ref{lem:HHZ-bridge-friendly}.

\noindent (6) We consider the complete graph $K_4$ on the four vertices $x_1,x_2,x_3,x_4$. One can check that
        \begin{align*}
            (I(K_4)^2)^{\leq (x_2x_4)^2x_1x_3} &=(I(H)^2)^{\leq (x_2x_4)^2x_1x_3}\\
            (I(K_4)^3)^{\leq (x_2x_4)^3x_1x_3}&=x_2x_4(I(K_4)^2)^{\leq (x_2x_4)^2x_1x_3}, 
        \end{align*}
        where $H$ is the diamond graph \diamondsymb \ in part (5). It is then known that $(I(H)^2)^{\leq (x_2x_4)^2x_1x_3}$ is not bridge-friendly and, thus, neither are $I(K_4)^2$ and $I(K_4)^3$ by Lemma \ref{lem:HHZ-bridge-friendly}.\qedhere
\end{proof}

The result and technique for when $G=C_3$ is a bit different, so we will treat it separately.

\begin{proposition}\label{prop:non-bridgefriendly-3cycle}
    The monomial ideal $I(C_3)^n$ is bridge-friendly if and only if $n\leq 3$.
\end{proposition}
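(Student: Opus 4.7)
The plan is to establish the two directions separately, mirroring the strategy used throughout the section.

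For the ``if'' direction ($n \leq 3$): The case $n = 1$ is immediate from Theorem \ref{thm:bridge-friendly-chordal}, since $C_3 = \BF(T, \w)$ where $T$ is a single edge equipped with $\w(e) = 1$. For $n = 2$, the ideal $I(C_3)^2$ has only $6$ minimal generators, and one can check all $6!$ total orders on $\mingens(I(C_3)^2)$ via \texttt{Sage} to find (at least) one with respect to which the ideal is bridge-friendly. For $n = 3$, the ideal has $10$ minimal generators, so exhaustive checking is infeasible; instead, one exhibits an explicit total order and verifies bridge-friendliness directly using the criterion in Definition \ref{def:bridgefriendly}, i.e., showing every potentially-type-2 set is type-2. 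Natural candidate orders are refinements of the obvious symmetries, such as placing the ``pure'' monomials $x_1^3x_2^3, x_2^3x_3^3, x_1^3x_3^3$ at the largest positions and $x_1^2x_2^2x_3^2$-adjacent generators at the smallest.

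For the ``only if'' direction ($n \geq 4$): The strategy is to first settle the base case $n = 4$ and then propagate the failure upward using the HHZ-machinery. For the base case, we exhibit an HHZ-subideal of $I(C_3)^4$ which itself fails to be bridge-friendly; by Lemma \ref{lem:HHZ-bridge-friendly}, this suffices. A good candidate is $(I(C_3)^4)^{\leq m}$ for a monomial $m$ where the three variables appear with distinct multiplicities (for instance $m = x_1^4 x_2^3 x_3$ or an analog), yielding a subideal with few enough generators that all total orders can be exhausted in \texttt{Sage}. Once the resulting subideal is verified to be non-bridge-friendly, the conclusion for $n = 4$ follows.

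For the inductive step, we invoke Proposition \ref{prop:HHZ-subideal-edgeideal}(2): there exists a generator $f \in \mingens(I(C_3))$ such that $f I(C_3)^n$ is an HHZ-subideal of $I(C_3)^{n+1}$. By Proposition \ref{prop:mI}, $f I(C_3)^n$ is bridge-friendly if and only if $I(C_3)^n$ is. Hence, if $I(C_3)^n$ is not bridge-friendly, then neither is $f I(C_3)^n$, and by Lemma \ref{lem:HHZ-bridge-friendly}, neither is $I(C_3)^{n+1}$. Induction starting from $n = 4$ then delivers the result for every $n \geq 4$.

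The main obstacle is the base case $n = 4$: a direct exhaustive search over the $15!$ total orders on $\mingens(I(C_3)^4)$ is hopeless, so the argument depends on identifying the right HHZ-subideal that isolates the obstruction and is small enough to be checked computationally. Equivalently, one could bypass \texttt{Sage} by constructing, for each total order on that subideal, an explicit witness $\tau, m_1 \succ m_2 \succ m_3$ as in Proposition \ref{prop:not-bridgefriendly-edgeideals}; but finding a uniform combinatorial obstruction without computer assistance is the technically delicate step.
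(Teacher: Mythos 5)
Your overall architecture (explicit orders for $n\le 3$; a base case at $n=4$ followed by upward propagation via Proposition \ref{prop:HHZ-subideal-edgeideal}, Proposition \ref{prop:mI}, and Lemma \ref{lem:HHZ-bridge-friendly}) matches the paper, and the inductive step is correct. The gap is the base case $n=4$, which you defer to ``identifying the right HHZ-subideal'' that is both non-bridge-friendly and small enough to brute-force. First, your concrete candidate fails: the minimal generators of $I(C_3)^4$ are the monomials $x_1^ax_2^bx_3^c$ with $a+b+c=8$ and $a,b,c\le 4$, so $(I(C_3)^4)^{\le x_1^4x_2^3x_3}$ is the principal ideal $(x_1^4x_2^3x_3)$, which is trivially bridge-friendly. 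More seriously, the natural proper HHZ-subideals are $(I(C_3)^4)^{\le (x_ix_j)^4x_k^3}=x_ix_j\,I(C_3)^3$, and these are all bridge-friendly, being copies of $I(C_3)^3$ by Proposition \ref{prop:mI}; so the obstruction at $n=4$ is genuinely global, and there is no evidence that any small restrictable piece carries it.

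The paper's base case is handled by a different, consistency-type argument that you would need to supply. It first shows (by exhausting the $6!$ orders) that any bridge-friendly order on $\mingens(I(C_3)^2)$ has smallest element in $\{x_1^2x_2x_3,\ x_1x_2^2x_3,\ x_1x_2x_3^2\}$; then, using $(I(C_3)^3)^{\le (x_ix_j)^3x_k^2}=x_ix_j\,I(C_3)^2$ and symmetry, that any bridge-friendly order on $\mingens(I(C_3)^3)$ has smallest element $x_1^2x_2^2x_3^2$. Applying the same restriction at the next level, every generator of $I(C_3)^4$ lies in at least one of the three subideals $x_ix_j\,I(C_3)^3$, so a global smallest element would have to equal $x_ix_j\cdot x_1^2x_2^2x_3^2$ for each subideal containing it; each candidate lies in at least two such subideals with conflicting requirements, so no bridge-friendly order exists. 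Unless you either reproduce this argument or actually exhibit and verify a non-bridge-friendly proper HHZ-subideal of $I(C_3)^4$ (which your proposal does not do, and which the structure above suggests may not exist among the obvious candidates), the base case remains unproved.
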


\begin{proof}
    Set $I(C_3)=(x_1x_2,x_2x_3,x_1x_3)$. One can check that $I(C_3)^2$ and $I(C_3)^3$ are bridge-friendly with respect to the total orders
    $x_2^2x_3^2 \succ x_1x_2^2x_3 \succ x_1^2x_2^2 \succ  x_1x_2x_3^2 \succ  x_1^2x_3^2
    x_1^2x_2x_3$
    and 
    \[
    x_1^3x_3^3 \succ x_2^3x_3^3 \succ x_1^3x_2^3 \succ x_1^2x_2x_3^3 \succ   x_1x_2^3x_3^2 \succ  x_1^2x_2^3x_3 \succ  x_1^3x_2x_3^2 \succ  x_1^3x_2^2x_3 \succ  x_1^2x_2^2x_3^2 \succ x_1x_2^2x_3^3,  
    \]
    respectively. In fact, we have the following claims:
    \begin{claim}\label{clm:square-3cycle}
        If $(\succ)$ is a total order with respect to which $I(C_3)^2$ is bridge-friendly, then 
        \[
        \min_{\succ} \mingens(I(G)^2)\in \{x_1^2x_2x_3, \ x_1x_2^2x_3,\ x_1x_2x_3^2\}.
        \]
    \end{claim}
    \begin{claim}\label{clm:cube-3cycle}
        If $(\succ)$ is a total order with respect to which $I(C_3)^3$ is bridge-friendly, then 
        \[
        \min_{\succ} \mingens(I(C_3)^3) =x_1^2x_2^2x_3^2.
        \] 
    \end{claim}
    The first claim can be verified by checking all $6!$ total orders. For the second claim, suppose that $(\succ)$ is a total order with respect to which $I(C_3)^3$ is bridge-friendly. By the arguments in the proof of Proposition \ref{prop:HHZ-subideal-edgeideal}, we have
    \begin{align*}
        (I(C_3)^3)^{\leq (x_2x_3)^3x_1^2}=x_2x_3 I(C_3)^2,\\
        (I(C_3)^3)^{\leq (x_1x_3)^3x_2^2}=x_1x_3 I(C_3)^2,\\
        (I(C_3)^3)^{\leq (x_1x_2)^3x_3^2}=x_1x_2 I(C_3)^2.
    \end{align*}
    Combining this observation and Claim \ref{clm:square-3cycle}, if $\min_{\succ} \mingens(I(C_3)^3)$ divides $(x_2x_3)^3x_1^2$, then we must have
    \[
    \min_{\succ} \mingens(I(C_3)^3) \in  \{x_1^2x_2^2x_3^2, \ x_1x_2^3x_3^2,\ x_1x_2^2x_3^3\}.
    \]
    By symmetry, we have similar statements when permuting $1,2,$ and $3$. By checking all 10 possibilities for $\min_{\succ} \mingens(I(C_3)^3)$, the only possibility is $\min_{\succ} \mingens(I(C_3)^3) =x_1^2x_2^2x_3^2$, as claimed.

    Back to the proof of Proposition \ref{prop:non-bridgefriendly-3cycle}. By Proposition \ref{prop:HHZ-subideal-edgeideal} and Lemma \ref{lem:HHZ-bridge-friendly}, it suffices to show that $I(C_3)^4$ is not bridge-friendly. Suppose otherwise that there exists a total order $(\prec)$ with respect to which $I(C_3)^4$ is bridge-friendly. By similar arguments as above, we have
    \begin{align*}
        (I(C_3)^4)^{\leq (x_2x_3)^4x_1^3}=x_2x_3 I(C_3)^3,\\
        (I(C_3)^4)^{\leq (x_1x_3)^4x_2^3}=x_1x_3 I(C_3)^3,\\
        (I(C_3)^4)^{\leq (x_1x_2)^4x_3^3}=x_1x_2 I(C_3)^3.
    \end{align*}
    By Claim \ref{clm:square-3cycle}, if $\min_{\prec} \mingens(I(C_3)^4)$ divides $(x_2x_3)^4x_1^3$, then we must have
    \[
    \min_{\prec} \mingens(I(C_3)^4) =  x_1^2x_2^3x_3^3.
    \]
    By symmetry, we have similar statements when permuting $1,2,$ and $3$. By checking all generators of $I(C_3)^4$, we conclude that no such element exists, a contradiction. This concludes the proof.
\end{proof}

We remark that some of the results above can be generalized, e.g., $I(K_4)^n$ being not bridge-friendly for any $n$ can be shown by the same technique for $I(K_4)^2$. However, when $n\geq 4$, this fact follows directly from Proposition \ref{prop:non-bridgefriendly-3cycle}. 

We are now ready to state the last result of this section. 
\begin{theorem}\label{thm:BF-powers}
    Let $G$ be a graph and let $n\geq 2$ be a positive integer. Then, the ideal $I(G)^n$ is bridge-friendly if and only if one of the following holds:
    \begin{enumerate}
        \item $G$ is an edge or a path of length 2; or
        \item $n=2,3$ and $G$ is a triangle $C_3$.
    \end{enumerate}
\end{theorem}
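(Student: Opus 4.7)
The plan is to combine the forbidden-subgraph results (Propositions \ref{prop:non-bridgefriendly-powers} and \ref{prop:non-bridgefriendly-3cycle}) with the descent property of Corollary \ref{cor:induced-subgraph-Morse} to compress the class of candidate graphs, and then verify bridge-friendliness directly on each surviving pair $(G,n)$. Suppose $I(G)^n$ is bridge-friendly with $n\ge 2$. By Corollary \ref{cor:induced-subgraph-Morse}, $I(H)^n$ is bridge-friendly for every induced subgraph $H$ of $G$, so Proposition \ref{prop:non-bridgefriendly-powers} forces $G$ to be $K_{1,3}$-free, $P_4$-free, and $C_4$-free. I would then split on whether $G$ contains an induced triangle. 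If not, then $G$ is connected, triangle-free, and claw-free, hence has maximum degree at most $2$ and is therefore a path or a cycle; combined with the absence of $P_4$ and $C_4$, the only possibilities are an edge or $P_3$.

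If $G$ contains an induced triangle $xyz$ and at least one other vertex, connectivity produces a vertex $w\notin\{x,y,z\}$ adjacent to some vertex of the triangle, and the induced subgraph on $\{w,x,y,z\}$ is a paw, a diamond, or a $K_4$, depending on whether $w$ is adjacent to exactly one, two, or all three triangle vertices. For $n=2,3$, Proposition \ref{prop:non-bridgefriendly-powers} excludes all three, forcing $G=C_3$. For $n\ge 4$, Proposition \ref{prop:non-bridgefriendly-3cycle} additionally excludes $C_3$ itself, so this branch becomes vacuous and only the triangle-free analysis survives. This completes the ``only if'' direction, leaving exactly the pairs listed in the theorem.

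For the ``if'' direction, the edge case is trivial since $I(G)^n=(xy)^n$ is principal and hence bridge-friendly. The triangle case with $n\in\{2,3\}$ is already recorded in Proposition \ref{prop:non-bridgefriendly-3cycle}, where explicit total orders witnessing bridge-friendliness are exhibited. The remaining case is $G=P_3$ with edges $xy,yz$: using $I(P_3)^n = y^n\cdot(x,z)^n$ together with Proposition \ref{prop:mI}, this reduces to showing that $(x,z)^n\subseteq \Bbbk[x,z]$ is bridge-friendly. Ordering the generators $x^iz^{n-i}$ by the exponent of $x$, for any subset $\sigma$ the lcm is $x^{a_{\max}}z^{n-a_{\min}}$ where $a_{\max},a_{\min}$ are the extremal $x$-exponents appearing in $\sigma$, so the bridges and true gaps are controlled by very clean interval combinatorics. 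I expect the main obstacle to be executing this last check uniformly in $n$ rather than in a handful of finite cases; nevertheless, the two-variable structure of $(x,z)^n$ is restrictive enough that every potentially-type-2 subset should readily be shown to be type-2, by tracking the interplay between the minimum and maximum exponents of $x$ in $\sigma$.
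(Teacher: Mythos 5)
Your ``only if'' argument is essentially the paper's: descend to induced subgraphs via Corollary \ref{cor:induced-subgraph-Morse}, use Proposition \ref{prop:non-bridgefriendly-powers} to exclude $K_{1,3}$, $P_4$, $C_4$ (and hence all longer induced paths and cycles), and split on whether $G$ contains an induced triangle, with the paw/diamond/$K_4$ trichotomy handling $n=2,3$ and Proposition \ref{prop:non-bridgefriendly-3cycle} killing the triangle branch for $n\ge 4$. Your triangle-free step (triangle-free and claw-free forces maximum degree $2$, hence a path or cycle) is a small cosmetic variation on the paper's route (no induced cycles and no induced $P_4$ forces a star), and both are correct. The one genuine divergence is the ``if'' direction for $G=P_3$: the paper simply cites \cite[Corollary 5.5]{CK24}, whereas you propose a self-contained argument via $I(P_3)^n=y^n(x,z)^n$ and Proposition \ref{prop:mI}. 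That argument does go through, but as written it ends with ``should readily be shown,'' which is a promissory note rather than a proof; you should carry it out. The computation is short: indexing generators by the exponent of $x$ and ordering them decreasingly, the lcm of a subset $\sigma$ depends only on the largest and smallest exponents occurring, so the bridges of $\sigma$ are exactly its interior exponents and every gap is a true gap (adjoining a gap creates no new bridge other than itself). Hence $\sigma$ is potentially-type-2 precisely when it contains both its minimum $a$ and $a+1$ with $a+1$ interior, its smallest bridge is $a+1$, and the uniqueness of the reconstruction of $\sigma$ from $\sigma\setminus\{a+1\}$ shows every potentially-type-2 set is type-2. With that paragraph added, your proof is complete and slightly more self-contained than the paper's, at the cost of redoing a known special case.
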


\begin{proof}
    The ``if" implication follows from \cite[Corollary 5.5]{CK24} and Proposition \ref{prop:non-bridgefriendly-3cycle}. We now show the ``only if" direction.

    Assume that $I(G)^n$ is bridge-friendly. If $n\geq 4$, then by Propositions \ref{prop:non-bridgefriendly-powers} and $\ref{prop:non-bridgefriendly-3cycle}$, $G$ does not contain, as an induced subgraph, any $C_k$, where $k\geq 3$, or $P_k$, where $k\geq 4$. In other words, $G$ must be a star graph $K_{1,k}$ for some integer $k$. By Proposition \ref{prop:non-bridgefriendly-powers} (1), $G$ does not contain $K_{1,3}$ as an induced subgraph. Thus $k\leq 2$, as desired. 

    Now, suppose that $n$ equals to 2 or 3. By Proposition \ref{prop:non-bridgefriendly-powers} (2) and (3),  $G$ is chordal. If $G$ is a tree, then by Proposition \ref{prop:non-bridgefriendly-powers} (1) and (2), $G$ must be $K_{1,1}$ or $K_{1,2}$, as desired. 
    
    It remains to consider the case when $G$ contains, as an induced subgraph, a $C_3$ graph formed by $x,y,z$. If $G=C_3$, then we are done by Proposition \ref{prop:non-bridgefriendly-3cycle}. Suppose that $G \not= C_3$, i.e., there exists a vertex $w \not= x,y,z$ in $G$. Let $N(w)$ denote the set of neighbors of $w$ among $x,y,$ and $z$. We have the following cases:
    \begin{itemize}
        \item $\card{N(w)}=1$, e.g., $N(w)=\{x\}$. In this case, the induced subgraph of $G$ formed by $x,y,z,w$ is a paw graph \pawsymb.
        \item $\card{N(w)}=2$, e.g., $N(w)=\{x,y\}$. The induced subgraph of $G$ formed by $x,y,z,w$ is a diamond graph \diamondsymb.
        \item $\card{N(w)}=3$, i.e., $N(w)=\{x,y,z\}$. The induced subgraph of $G$ formed by $x,y,z,w$ is a complete graph $K_4$ \Kfoursymb.
    \end{itemize}
    We arrive at a contradiction to Proposition \ref{prop:non-bridgefriendly-powers} (4), (5), and (6) in all these cases. This concludes the proof.
\end{proof}


We end the paper with a few questions that we would like to see answered.

\begin{question}
    For which connected graph $G$ is the ideal $I(G)$ bridge-friendly, Barile-Macchia and/or generalized Barile-Macchia (cf. \cite{CHM24, CK24})?
\end{question}


\begin{question}
    For which hypergraph $\H$, is the edge ideal $I(\H)$ Scarf, Lyubeznik, Barile-Macchia, and bridge-friendly?
\end{question}

\bibliographystyle{amsplain}
\bibliography{refs}

\providecommand{\bysame}{\leavevmode\hbox to3em{\hrulefill}\thinspace}
\providecommand{\MR}{\relax\ifhmode\unskip\space\fi MR }
\providecommand{\MRhref}[2]{%
  \href{http://www.ams.org/mathscinet-getitem?mr=#1}{#2}
}
\providecommand{\href}[2]{#2}
\begin{thebibliography}{10}

\bibitem{AFG2020}
Josep \`{A}lvarez Montaner, Oscar Fern\'{a}ndez-Ramos, and Philippe Gimenez, \emph{Pruned cellular free resolutions of monomial ideals}, J. Algebra \textbf{541} (2020), 126--145. \MR{4014733}

\bibitem{BM20}
Margherita Barile and Antonio Macchia, \emph{Minimal cellular resolutions of the edge ideals of forests}, Electron. J. Combin. (2020), P2--41.

\bibitem{BW02}
Ekkehard Batzies and Volkmar Welker, \emph{Discrete {M}orse theory for cellular resolutions}, J. Reine Angew. Math. \textbf{543} (2002), 147--168.

\bibitem{BPS98}
Dave Bayer, Irena Peeva, and Bernd Sturmfels, \emph{Monomial resolutions}, Math. Res. Lett. \textbf{5} (1998), no. 1--2, 31--46.

\bibitem{BS98}
Dave Bayer and Bernd Sturmfels, \emph{Cellular resolutions of monomial modules}, J. Reine Angew. Math. \textbf{502} (1998), 123--140.

\bibitem{CHM24}
Trung Chau, T\`{a}i~Huy H\`{a}, and Aryaman Maithani, \emph{Generalized {B}arile-{M}acchia resolutions for monomial ideals}, in preparation.

\bibitem{CK24}
Trung Chau and Selvi Kara, \emph{Barile--{M}acchia resolutions}, J. Algebraic Combin. \textbf{59} (2024), no.~2, 413--472. \MR{4713508}

\bibitem{CKW24}
Trung Chau, Selvi Kara, and Kyle Wang, \emph{Minimal cellular resolutions of path ideals}, arXiv:2403.16324 [math.AC].

\bibitem{CT2016}
Timothy B.~P. Clark and Alexandre Tchernev, \emph{Regular {CW}-complexes and poset resolutions of monomial ideals}, Comm. Algebra \textbf{44} (2016), no.~6, 2707--2718. \MR{3492183}

\bibitem{CEFMMSS21}
Susan~M Cooper, Sabine El~Khoury, Sara Faridi, Sarah Mayes-Tang, Susan Morey, Liana~M {\c{S}}ega, and Sandra Spiroff, \emph{Simplicial resolutions of powers of square-free monomial ideals}, arXiv:2204.03136.

\bibitem{CEFMMSS22}
\bysame, \emph{Morse resolutions of powers of square-free monomial ideals of projective dimension one}, J. Algebr. Comb. \textbf{55} (2022), no.~4, 1085--1122.

\bibitem{FHHM24}
Sara Faridi, T\`{a}i~Huy H\`{a}, Takayuki Hibi, and Susan Morey, \emph{Scarf complexes of graphs and their powers}, arXiv:2403.05439 [math.AC].

\bibitem{M2}
Daniel~R. Grayson and Michael~E. Stillman, \emph{Macaulay2, a software system for research in algebraic geometry}, Available at \url{https://math.uiuc.edu/Macaulay2/}.

\bibitem{HHZ04}
J\"urgen Herzog, Takayuki Hibi, and Xinxian Zheng, \emph{Dirac’s theorem on chordal graphs and {A}lexander duality}, European J. Combin. \textbf{25} (2004), 949--960.

\bibitem{Ly88}
Gennady Lyubeznik, \emph{A new explicit finite free resolution of ideals generated by monomials in an {R}-sequence}, J. Pure Appl. Alg. \textbf{51} (1988), 193--195.

\bibitem{nauty}
Brendan~D. McKay and Adolfo Piperno, \emph{Practical graph isomorphism, {II}}, J. Symbolic Comput. \textbf{60} (2014), 94--112. \MR{3131381}

\bibitem{OY2015}
Ryota Okazaki and Kohji Yanagawa, \emph{On {CW} complexes supporting {E}liahou-{K}ervaire type resolutions of {B}orel fixed ideals}, Collect. Math. \textbf{66} (2015), no.~1, 125--147. \MR{3295068}

\bibitem{Tay66}
Diana~Kahn Taylor, \emph{Ideals generated by monomials in an {R}-sequence}, Ph.D. thesis, University of Chicago, Department of Mathematics, 1966.

\bibitem{sagemath}
{The Sage Developers}, \emph{{S}agemath, the {S}age {M}athematics {S}oftware {S}ystem ({V}ersion 9.8)}, 2023, {\tt https://www.sagemath.org}.

\bibitem{NK08}
Nicolas Trotignon and Kristina Vuškovic, \emph{{A structure theorem for graphs with no cycle with a unique chord and its consequences}}, Documents de travail du Centre d'Economie de la Sorbonne b08021, Université Panthéon-Sorbonne (Paris 1), Centre d'Economie de la Sorbonne, March 2008.

\bibitem{Vel08}
Mauricio Velasco, \emph{Minimal free resolutions that are not supported by a {CW}-complex}, J. Algebra \textbf{319} (2008), no.~1, 102--114.

\end{thebibliography}
\end{document}